\documentclass[12pt]{amsart}
\usepackage{color}
\usepackage[all]{xy}
\usepackage{amssymb,amsmath}

\usepackage{mathrsfs}
\usepackage{eucal}
\usepackage{enumerate}

\usepackage{setspace}

\usepackage{hyperref}
\hypersetup{
	colorlinks=true,
	linkcolor=blue,
	filecolor=blue,
	citecolor = blue,
	urlcolor=blue,
}

\date{April 30, 2026}

\calclayout
\newtheorem{dummy}{anything}[section]
\newtheorem{theorem}[dummy]{Theorem}

\newtheorem{lemma}[dummy]{Lemma}
\newtheorem{proposition}[dummy]{Proposition}
\newtheorem{corollary}[dummy]{Corollary}

\theoremstyle{definition}%%Change Theoremstyle
\newtheorem{definition}[dummy]{Definition}
  \newtheorem{example}[dummy]{Example}
  
  \newtheorem{remark}[dummy]{Remark}

    \newtheorem*{question}{Question}

\newcommand{\cD}{\mathcal D}

\newcommand{\cF}{\mathcal F}
\newcommand{\cG}{\mathcal G}
\newcommand{\cH}{\mathcal H}

\newcommand{\cM}{\mathcal M}

\newcommand{\cO}{\Or}
\newcommand{\cP}{\mathscr P}

\newcommand{\cS}{\mathcal S}

\newcommand{\bC}{\mathbf C}
\newcommand{\bD}{\mathbf D}

\newcommand{\bZ}{\mathbb Z}

\newcommand{\bP}{\mathbf P}

\newcommand{\bbC}{\mathbb C}
\newcommand{\bbF}{\mathbb F}

\newcommand{\bbQ}{\mathbb Q}
\newcommand{\bbR}{\mathbb R}

\newcommand{\bbZ}{\mathbb Z}

 \DeclareMathOperator{\Ind}{Ind}
\DeclareMathOperator{\Res}{Res}
\DeclareMathOperator{\Iso}{\mathfrak{Iso}}
\DeclareMathOperator{\Mor}{Mor}
 \DeclareMathOperator{\Map}{Map}
\DeclareMathOperator{\ind}{Ind}
 \DeclareMathOperator{\Dim}{Dim}
\DeclareMathOperator{\HomDim}{hDim}
\DeclareMathOperator{\hdim}{hdim}
  \DeclareMathOperator{\rk}{rank}

\DeclareMathOperator{\Hom}{Hom}
\DeclareMathOperator{\End}{End}
 
\DeclareMathOperator{\Supp}{Supp}
 \DeclareMathOperator{\Inc}{Inc}
 \DeclareMathOperator{\res}{Res}
 \DeclareMathOperator{\Mod}{Mod}
 \DeclareMathOperator{\Def}{Def} 
\newcommand{\RMod}{R\textrm{-Mod}}
\DeclareMathOperator{\Inf}{Inf}
\newcommand\uRb[1]{R[{#1}\hphantom{{\hskip-4pt}l}^{\textbf{?}\,}]}
\newcommand\oG{\overline{G}}
\newcommand\oK{\overline{K}}
 \DeclareMathOperator{\Ext}{Ext}

\newcommand{\cy}[1]{\bZ/{#1}}
\newcommand{\la}{\langle}
\newcommand{\ra}{\rangle}
\newcommand{\vv}{\, | \,}
\newcommand{\bd}{\partial}
\newcommand{\id}{\mathrm{id}}
\newcommand\Fp{\bbF_p}
\def\bZp{\bZ_{(p)}}

\def\G{\varGamma}

\newcommand{\disjointunion}{\bigsqcup}

\newcommand\uHom{\Hom_{\RG_\textbf{?}}}
\newcommand{\uExt}{\Ext_{\RG_\textbf{?}}}

\DeclareMathOperator{\Or}{Or}
\newcommand{\Sub}{{\cS}(G)}
\newcommand\OrG{\Or _{\cF}G}
\newcommand\OrH{\Or_{\cF}H}

\newcommand\RG{R\G}
\newcommand\ZG{\bZ\G}
\newcommand\un{\underline}
\newcommand{\up}{^{(p)}}

\newcommand\uR[1]{R[{#1}^{\, \textbf{?}\,}]}
\newcommand\uC[1]{\CC({#1}^{\textbf{?}};R)}
\newcommand\uCZ[1]{\CC({#1}^{\textbf{?}};\bZ)}
\newcommand\CC{\bC}
\newcommand\DD{\bD}

\newcommand\PP{\bP}

\newcommand\uH[1]{H_*({#1}^{\textbf{?}};R)}
\DeclareMathOperator{\Qd}{Qd}

\newcommand{\PG}{\cS_G}
\DeclareMathOperator{\Fix}{Fix}

\DeclareMathOperator{\Pic}{Pic}

\def\bZp{\bbZ_{(p)}}

\begin{document}

\title{Equivariant CW-complexes homotopy equivalent to spheres: a survey}
\author{Ian Hambleton}
\author{Erg\"un Yal\c c\i n}

\address{Department of Mathematics, McMaster University,
Hamilton, Ontario L8S 4K1, Canada}

\email{hambleton@mcmaster.ca }

\address{Department of Mathematics, Bilkent University,
06800 Bilkent, Ankara, Turkey}

\email{yalcine@fen.bilkent.edu.tr }

\thanks{Research partially supported by NSERC Discovery Grant A4000.}

\begin{abstract}
This is a survey about finite group actions on CW-complexes and related topics, primarily based on our joint work. The main applications are to finite $G$-CW-complexes which are homotopy equivalent to spheres.  We have tried to give a fairly short overview of the extensive literature in this area, and we apologize in advance for our oversights and omissions.  
\end{abstract}

\maketitle

\section{ Introduction}
\label{sect:introduction} 
Let $G$ be a finite group. The unit spheres $S(V)$ in  finite-dimensional orthogonal representations of $G$ provide the basic examples of smooth linear $G$-actions on spheres. These linear  actions satisfy a number of special constraints on the dimensions of fixed sets and the structure of the isotropy subgroups, arising from character theory.  However, such constraints do not hold in general for smooth $G$-actions on spheres, unless $G$ has prime power order.
In a series of papers \cite{Hambleton:2010,Hambleton:2013,Hambleton:2014,Hambleton:2016,Hambleton:2017} we constructed new families of equivariant $G$-CW-complexes homotopy equivalent to spheres, with prescribed isotropy, as potential models for smooth \emph{non-linear} finite group actions on spheres.

%new examples of smooth \emph{non-linear} finite group actions on spheres, with prescribed isotropy.  

In this series we studied group actions on spheres in the setting of 
\emph{geometric $G$-homotopy representations},
introduced by tom Dieck \cite{Dieck:1982,Dieck:1982a,Dieck:1986}. These are finite (or more generally finite-dimensional) $G$-CW-complexes $X$ satisfying the property that for each $H\leq G$, the fixed point set $X^H$ is homotopy equivalent to a sphere $S^{n(H)}$ where $n(H)=\dim X^H$.  
We introduced algebraic homotopy representations as suitable chain complexes over the orbit category and proved a realization theorem for these algebraic models. 

In the study of finite group actions, the $p$-Sylow subgroups play a central role due to the constraints imposed by P.~A.~Smith theory on the homology of the fixed sets (see \cite{Smith:1938,Smith:1939,Smith:1941,Smith:1945}). We will focus on the interactions between the family of isotropy subgroups of the action and the dimensions of the fixed subspaces  for finite group actions on spheres (see the survey \cite{Hambleton:2015} for information about free actions on spheres).

\tableofcontents

\section{ Some motivating questions}\label{sec:two}
Actions of finite groups on spheres can be studied in various different geometrical settings.  The fundamental examples come from the unit spheres $S(V)$ in a real orthogonal  $G$-representation $V$, and character theory reveals intricate relations between the dimensions of the fixed subspheres $S(V)^H$, for subgroups $H \leq G$, and the structure of the isotopy subgroups $\{G_x \vv x \in S(V)\}$.
 One goal is to  better understand  the constraints on  these basic invariants,  in order to construct new smooth \emph{non-linear} finite group actions on spheres. 
  
A useful way to measure the complexity of the isotropy is the \emph{rank}.
We say that $G$ has \emph{rank $k$} if it contains a subgroup isomorphic to $(\bbZ/p)^k$, for some prime $p$, but no subgroup $(\bbZ/p)^{k+1}$, for any prime $p$. 
After free actions where the isotropy is trivial, the next simplest case is the following:

\begin{question}\label{ques:mainquestion} For which finite groups  $G$, does there exist a finite $G$-CW-complex $X\simeq
S^n$ with all isotropy subgroups of rank $k$ ~?
\end{question}

By P.~A.~Smith theory, the rank $k$  assumption on the isotropy subgroups implies that $G$ must have $\rk(G)\leq k+1$ (see \cite[Corollary 6.3]{Hambleton:2016}). For example, since every rank one finite group can act freely on a finite
complex homotopy equivalent to a sphere (Swan \cite{Swan:1960}), we could restrict our attention to groups  with rank $\geq 2$. Here are three  natural settings for the study of finite group actions on spheres:

\begin{enumerate}[(A)]
\item  smooth $G$-actions
  on closed manifolds homotopy equivalent to spheres; 
\item finite $G$-homotopy representations  (see tom Dieck 
\cite[II.10.1]{Dieck:1987});
\item finite $G$-CW-complexes $X \simeq S^n$.
\end{enumerate}

In contrast to  $G$-representation spheres $S(V)$, the non-linear smooth $G$-actions on a smooth manifold $M \simeq S^n$ exhibit more flexibility. For example, in the linear case,  the fixed sets $S(V)^H$ are always linear subspheres. For smooth actions, the fixed sets are smoothly embedded submanifolds but may not even be integral homology spheres \cite{Petrie:1979}. 
Well-known general constraints on smooth actions arise from P.~A.~Smith theory: 
\begin{theorem}[P.~A.~Smith \cite{Smith:1938}]
Let $p$ be a prime and $G$ be a finite $p$-group. If $X$ is a finite-dimensional $G$-CW-complex which has the mod-$p$ homology of an $n$-sphere, then the fixed point set $X^G$ is either empty or has the mod-$p$ homology of an $r$-sphere for some $0\leq r \leq n$. If $p$ is odd, then $n-r$ is even.
\end{theorem}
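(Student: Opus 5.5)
The plan is to reduce to $G=\bZ/p$ by induction on $|G|$. A nontrivial finite $p$-group $G$ has a normal subgroup $H$ of index $p$. The fixed set $X^H$ is a finite-dimensional $G/H\cong\bZ/p$-CW-complex with $(X^H)^{G/H}=X^G$. So if the cyclic case is known, we apply the inductive hypothesis to $H$ acting on $X$. This shows that $X^H$ is empty or a mod-$p$ homology $r'$-sphere. Applying the cyclic case to $G/H$ acting on $X^H$ then gives the claim for $X^G$. For odd $p$ the parity statement also composes, since $n-r=(n-r')+(r'-r)$ is a sum of even numbers. If $X^H$ is empty then so is $X^G$. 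It therefore suffices to treat $G=\bZ/p=\langle g\rangle$ with coefficients $\Fp$.

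For $G=\bZ/p$ I will use the classical Smith sequences. Write $\sigma=1+g+\dots+g^{p-1}$ and $\tau=1-g$ in $\Fp G$; then $\tau^{p-1}=\sigma$, and for $\rho\in\{\sigma,\tau^i\}$ with complement $\bar\rho$ ($\bar\sigma=\tau$, $\overline{\tau^i}=\tau^{p-i}$) there are short exact sequences of cellular chain complexes
\[0\to \bar\rho C_*(X,X^G)\to C_*(X,X^G)\xrightarrow{\rho}\rho C_*(X,X^G)\to 0 .\]
These are exact because $C_*(X,X^G;\Fp)$ is a free $\Fp G$-module on the cells outside $X^G$. Together with the sequence $0\to\rho C_*(X,X^G)\to\rho C_*(X)\to C_*(X^G)\to 0$ (for $\rho=\sigma$ or $\tau^i$ acting as multiplication by $p=0$ or by $0$ on fixed cells), this gives the long exact Smith sequences relating the groups $H^\rho_*=H_*(\rho C_*(X))$, $H_*(X^G)$ and $H_*(X)$. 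Finite-dimensionality makes all these groups vanish above $\dim X$, which is exactly what makes the dimension-counting argument terminate.

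The key step, and the main obstacle, is the Smith inequality
\[\sum_{i\ge k}\dim H_i(X^G;\Fp)\le \sum_{i\ge k}\dim H_i(X;\Fp)\quad\text{for all }k.\]
I will prove it by descending induction on $k$ using the exact sequence $H^{\rho}_{i}\oplus H_i(X^G)\to H_i(X)\to \cdots$ and the connecting map $H^{\bar\rho}_{i+1}\to H^{\rho}_i\oplus H_i(X^G)$, alternating $\rho$ between $\sigma$ and $\tau$. Finite dimensionality of $X$ starts the induction at the top degree. Taking $k=0$ with reduced homology handled by a cone or basepoint adjustment, the inequality gives $\dim H_*(X^G)\le 2$. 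An Euler characteristic count, $\chi(X)\equiv\chi(X^G)\pmod p$ from the free cells of $X\smallsetminus X^G$, shows the total dimension is even when $X^G\neq\emptyset$. Hence $X^G$ is either empty or has the total mod-$p$ homology of a sphere. To see it is a homology $r$-sphere with $0\le r\le n$, rather than, say, two degree-zero classes sitting wrongly, I would run the same sequences to show $\tilde H_*(X^G)$ has dimension at most one and is concentrated in a degree $r\le n$. The bound $r\le n$ comes from the inequality with $k=n+1$.

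For the parity claim with $p$ odd, I will use the Euler characteristic congruence $\chi(X)\equiv\chi(X^G)\pmod p$ once more. Here $\chi(X)=1+(-1)^n$ and $\chi(X^G)=1+(-1)^r$. Since $p$ is odd and the difference of these values is $0$ or $\pm2$, which is divisible by $p$ only if it is $0$, we get $(-1)^n=(-1)^r$, i.e. $n-r$ is even. For infinite but finite-dimensional complexes this count is not directly available. There I would instead use the Smith sequence for $\rho=\sigma$, which shows that the generator in degree $n$ passes down through $H^\sigma$ and $H^\tau$ alternately and lands in $H_r(X^G)$ only after an even number of steps when $p$ is odd (when $p=2$, $\sigma=\tau$, so the alternation collapses and no parity constraint results). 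Carrying out this last bookkeeping cleanly is the main obstacle I anticipate.
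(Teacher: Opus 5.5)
The survey gives no proof of this theorem; it is simply quoted from Smith. The relevant comparison is therefore with the classical argument via Smith special homology, and that is the argument you sketch. Your reduction to $G=\bZ/p$ through a normal subgroup of index $p$ is correct. The inequality part is also correct, after three small repairs.

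\begin{itemize}
\item Your second short exact sequence is not right as written. Since $\rho$ kills fixed cells, $\rho C_*(X)\to\rho C_*(X,X^G)$ is already an isomorphism. The sequence that produces the long exact sequence you actually use is $0\to\bar\rho C_*(X)\oplus C_*(X^G)\to C_*(X)\xrightarrow{\rho}\rho C_*(X)\to 0$.
\item The statement carried through the descending induction must be $\dim H^\rho_k+\sum_{i\ge k}\dim H_i(X^G)\le\sum_{i\ge k}\dim H_i(X)$, for $\rho=\sigma$ and $\rho=\tau$ simultaneously. Without the $H^\rho_k$ term the inequality does not propagate.
\item The worry about ``two degree-zero classes'' is unnecessary. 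Once $\dim H_*(X^G)=2$ and $H_0(X^G)\neq 0$, $X^G$ is a mod-$p$ homology $r$-sphere, with $r=0$ allowed. The bound $r\le n$ then follows from the case $k=n+1$.
\end{itemize}

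The genuine gap is the Euler characteristic. Counting free cells proves $\chi(X)\equiv\chi(X^G)\pmod p$ only for \emph{finite} complexes, but the theorem concerns finite-dimensional ones. You use this congruence twice: to exclude a nonempty mod-$p$ acyclic fixed set, which you did not flag, and for the parity.

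Your proposed substitute for infinite complexes does not work. The two Smith sequences for the pair $(\sigma,\tau)$ are formally symmetric under $\sigma\leftrightarrow\tau$. In the range between $r$ and $n$ they only give interleaved chains of isomorphisms $H^\sigma_{i+1}\cong H^\tau_i$ and $H^\tau_{i+1}\cong H^\sigma_i$, and nothing in them decides which chain carries the class. For odd $p$ the asymmetry comes from the intermediate powers $\tau^j$.

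Concretely, the fix runs as follows.
\begin{itemize}
\item Let $C=C_*(X,X^G;\Fp)$, which is free over $\Fp G=\Fp[\tau]/(\tau^p)$.
\item Multiplication by $\tau^{p-1-j}$ gives exact sequences $0\to\tau^{j+1}C\to\tau^jC\to\sigma C\to0$ for $0\le j\le p-1$.
\item Since $\rho C\cong\rho C_*(X)$, your inequality gives $\dim H_*(\sigma C)<\infty$.
\item Descending induction on $j$ then gives finite homology throughout and $\chi(\tau^jC)=(p-j)\chi(\sigma C)$.
\item Taking $j=0$ yields $\chi(X)-\chi(X^G)=\chi(C)=p\,\chi(\sigma C)$.
\end{itemize}

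This homological congruence replaces the cell count in both places. Your two mod-$p$ arguments then go through unchanged: $\chi(X)\in\{0,2\}$ is never $\equiv 1$, and $1+(-1)^n\equiv 1+(-1)^r$ forces $n-r$ even for odd $p$. For finite $X$, your proof is complete as it stands, once the repairs above are made.
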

In particular, if $G$ acts smoothly on a smooth manifold $M \simeq S^n$, and $H$ is a subgroup of $p$-power order, for some prime $p$,  then $M^H$ is an $\Fp$-homology sphere.  In addition, even if the fixed sets are diffeomorphic to spheres, they may be knotted or linked as embedded subspheres in $M$ (see \cite{Dieck:1985b}, \cite{Davis:1988}).  One can also consider topological $G$-actions, usually with the assumption of local linearity, otherwise the fixed sets may not be locally flat  submanifolds. There is a large literature concerning smooth actions on manifolds,  as in setting (A), in particular on their  fixed sets and isotropy structures, but these topics deserve a full survey on their own.  

For example,  one  problem (first proposed by Montgomery and Samelson) that has attracted a lot of attention is the existence of smooth actions on spheres with exactly one fixed point. The first step towards a solution of this problem was 
the striking work of L.~Jones \cite{Jones:1971,Jones:1972}, establishing a converse to the  P.~A.~Smith theorem for the existence of actions on spheres and disks. The first one fixed point example was discovered by E.~Stein \cite{Stein:1977}, and then the work of R.~Oliver and T.~Petrie \cite{Oliver:1975,Oliver:1982}  opened the way to a systematic approach to constructing more examples via equivariant surgery (see \cite[Theorem A]{Petrie:1982} and \cite{Laitinen:1998}). 

In the setting (B) of $G$-homotopy representations, the objects of study are finite (or more generally finite-dimensional) $G$-CW-complexes $X$ satisfying the property that for each $H\leq G$, the fixed point set $X^H$ is homotopy equivalent to a sphere $S^{n(H)}$ where $n(H)=\dim X^H$. 
We could also consider a version of this setting where all isotropy subgroups of $X$ are $p$-subgroups of $G$, and each non-empty fixed point set $X^H$ for $H\leq G$ is a $\bZp$-homology $n(H)$-sphere where $n(H)=\dim X^H$.
 
The third setting (C) is the most flexible of all. Here we suppose that $X\simeq S^n$ is a finite $G$-CW-complex homotopy equivalent to a sphere, but do not require that $\dim X = n$. Moreover, we make no initial assumptions about the homology of the fixed sets $X^H$, although the conditions imposed by P.~A.~Smith theory with
 $\bbF_p$-coefficients still hold. In the setting (C), we will see that  $\dim X^H$ can be  (much) higher in general than its homological dimension, and this provides new obstructions to understanding our motivating question in setting (A) or (B).

 \section{ The foundational work of tom Dieck}\label{sec:three} 
 This section is a tribute and recollection of some of tom Dieck's many contributions to the theory of transformation groups, and specifically to the geometry of representations. 
 
 The study of orthogonal representations of finite groups (or more generally compact Lie groups) up to homotopy was pioneered by tom Dieck in the late 1970's (see \cite{Dieck:1978b,Dieck:1978,Dieck:1979b,Dieck:1979a}). The focus was on the pattern of dimensions for the fixed sets in representation spheres $S(V)$.  
 At the same period, the definition of the Burnside ring was extended to compact Lie groups and its role in investigated and followed up in joint work with Petrie \cite{Dieck:1975b,Dieck:1975a,Dieck:1977a,Dieck:1977,Dieck:1978a,Dieck:1979}.
 
 Important tools were introduced and developed,  such as equivariant homology theory and Mackey functors  \cite{Dieck:1972,Dieck:1973,Dieck:1975}, equivariant classifying spaces \cite{Dieck:1969,Dieck:1974}, and the stable equivariant  Picard group associated to the real representation ring $RO(G)$ of a compact Lie group \cite{Dieck:1978b,Dieck:1984b,Dieck:1985}. These concepts and methods have been important in the development of equivariant spectra (see 
  Lewis, May, Steinberger, and McClure \cite{Lewis:1986}).

 In the early 1980's tom Dieck and Petrie transformed the subject by introducing \emph{homotopy representations},  and embarked on a decade of research activity centered around this notion (see \cite{Dieck:1982b,Dieck:1982a,Dieck:1982,Dieck:1986}). The fundamental questions about dimensions and isotropy which appeared in representation theory now had analogues for $G$-CW-complexes. An excellent account of this whole area appears in tom Dieck's ICM article \cite{Dieck:1987a}. 
 
  Results in this more general setting then suggested geometric questions for smooth or topological group actions on spheres, such as possible linking numbers of disjoint fixed sets, and the application of tools from surgery theory \cite{Dieck:1988,Davis:1988,Dieck:1989b,Dieck:1989a,Dieck:1991a}. 
    
 We conclude this section by mentioning another aspect of the ideas presented by tom Dieck and Petrie in 
 \cite{Dieck:1982a}, which is the subject of current research. They defined the group $V(G)$ as the Grothendieck group of the monoid of equivalence classes of finite-dimensional $G$-homotopy representations, with the group operation given by join operation. They also introduced the concept of generalized homotopy representations:  a finite-dimensional $G$-CW-complex $X$ is a \emph{generalized homotopy representation} if the fixed point set $X^H$ is homotopy equivalent to $S^{n(H)}$, where $n(H)$ is not necessarily equal to $\dim X^H$. They defined a similar Grothendieck group $V'(G)$ for generalized homotopy representations and prove the following:
 
 \begin{theorem}[{tom Dieck and Petrie \cite{Dieck:1982a}}] If $G$ finite or a torus, the canonical map $V(G) \to  V'(G)$ is an isomorphism.
 \end{theorem}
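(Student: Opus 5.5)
To show $V(G)\to V'(G)$ is an isomorphism, I will prove two things: every generalized homotopy representation is stably equivalent, after joining with an honest homotopy representation, to an honest one (surjectivity); and the relations in $V'(G)$ already hold in $V(G)$ (injectivity). The main device is to replace a generalized homotopy representation $X$, with $X^H\simeq S^{n(H)}$ but possibly $\dim X^H>n(H)$, by a $G$-homotopy equivalent finite-dimensional $G$-CW-complex $Y$ with $\dim Y^H=n(H)$ for all $H$. Equivalently, I will show that a generalized homotopy representation is $G$-homotopy equivalent to a homotopy representation. Then the monoid map is surjective onto isomorphism classes, and injectivity follows because both Grothendieck groups are built from the same $G$-homotopy classes under join, which is compatible with the replacement.

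To construct $Y$, I will induct over conjugacy classes of isotropy subgroups, ordered by decreasing size. I begin with the maximal isotropy $H$, where $X^H$ is a $W H$-space whose $W H$-action is free off the larger fixed sets. At each stage I will have $Y_{>H}$, the union of $Y^K$ for $K>H$, with the correct dimensions. I then need to build $Y^H\supset Y_{>H}$ as a relative free $W H$-complex of dimension $n(H)$ together with a map to $X^H$ that is an equivalence. The standard tool is equivariant obstruction theory/Wall-type dimension reduction for relative free $WH$-complexes (Wall's finiteness/dimension theorem). The relative chain complex $C_*(X^H,X_{>H})$ is a complex of free (or projective) $\mathbb Z[WH]$-modules. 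Its homology vanishes above degree $n(H)$ because $X^H\simeq S^{n(H)}$ and $X_{>H}$ is a union of lower-dimensional spheres. From this I obtain a relative cohomological dimension bound, and the top cycles module is projective. When $n(H)\ge 3$ (after joining with a representation sphere to make all dimensions large, which is harmless in the Grothendieck group), Wall's argument lets me trade cells and realize a complex of dimension $n(H)$, possibly with one projective non-free module in the top degree. Since the statement is about finite-dimensional rather than finite complexes, projectivity causes no issue (Eilenberg swindle). I will assemble the stages via the $G$-Whitehead theorem, since fixed-point maps are equivalences at each level.

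The main obstacle is the vanishing of relative homology above $n(H)$, i.e.\ controlling $H_*(X^H,X_{>H})$. This requires that the dimension function be monotone ($n(K)\le n(H)$ for $K\ge H$) and that the inclusion $X_{>H}\to X^H$ be well behaved. For $G$ finite I expect this to follow from Smith theory (the Theorem of P.~A.~Smith above) applied to $p$-subgroups, together with a Mayer–Vietoris computation for the union $X_{>H}$. The low-dimensional cases, where Wall's method fails, are what force passage to the stable group, by joining with $S(V)$ for a large representation $V$. For a torus I would run the same argument with $WH$ a torus quotient and cellular chains over the orbit category, replacing Smith theory for $p$-groups by its torus version.
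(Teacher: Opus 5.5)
There is a genuine gap. Your argument rests on the unstable claim that every generalized homotopy representation is $G$-homotopy equivalent to a homotopy representation. That claim is false, and your inductive step breaks exactly where it fails.

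The trouble is not low dimensions. It is pairs $H<K$ of isotropy groups with $n(H)=n(K)$. For such pairs $X_{>H}$ is not a union of lower-dimensional spheres, and conditions (ii) and (iii) of Definition~\ref{def:algrep} (compare Theorem~\ref{thm:2014thma}) become real obstructions. Neither Smith theory nor a Mayer--Vietoris argument removes them.

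Here is a counterexample. Let $G=\mathbb{Z}/2$ and $n\ge 2$. Attach to the trivial $G$-sphere $S^n$ (class $a$) a free cell $G\times e^n$ along a constant map (classes $b$, $gb$). Then attach a free cell $G\times e^{n+1}$ whose attaching map represents $a+b+2gb\in\pi_n(S^n\vee S^n\vee S^n)$.
\begin{itemize}
\item We get $H_n(X)=\mathbb{Z}^3/\langle a+b+2gb,\,a+2b+gb\rangle\cong\mathbb{Z}$ via $v\mapsto v\cdot(3,-1,-1)$, so $X\simeq S^n$ and $X^G=S^n$.
\item The inclusion $X^G\hookrightarrow X$ has degree $3$; Smith theory only forces this degree to be odd.
\item In a homotopy representation $Y$ with $\dim Y^G=\dim Y=n$, $H_n$ is the group of top cycles, and $Z_n(Y)/Z_n(Y^G)$ embeds in the free group $C_n(Y)/C_n(Y^G)$. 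So $Y^G\hookrightarrow Y$ is a homotopy equivalence.
\end{itemize}
Hence $X$ is not $G$-equivalent to any homotopy representation. Joining with a representation sphere merely to raise dimensions (e.g.\ a trivial one) does not change this.

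In your induction at $H=1$, $H_*(X,X^G)$ is $\mathbb{Z}/3$ concentrated in degree $n$. The homology does vanish above $n$, but the top homology is torsion. So it cannot be the top cycle module of an $n$-dimensional free relative complex, and your claim that "the top cycles module is projective" fails. Your injectivity argument inherits the same problem.

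The missing idea is to stabilize in order to make the dimension function \emph{strictly} monotone. Join $X$ with $S(V)$, where $\dim V^H>\dim V^K$ whenever $H<K$ are isotropy groups.
\begin{itemize}
\item For finite $G$, take $V=k\,\mathbb{R}G$ with $k$ large, so $\dim V^H=k|G:H|$. This even overrides any failure of monotonicity of $n_X$, so no Smith theory is needed here.
\item For a torus, use sums of characters that are trivial on $H$ but not on $K$.
\end{itemize}
Then $n(H)=n_X(H)+\dim V^H$ is strictly monotone for $X*S(V)$, and conditions (ii) and (iii) are vacuous. Your induction now goes through:
\begin{itemize}
\item $\dim Y_{>H}\le n(H)-1$, so $H_i(X^H,Y_{>H})=0$ for $i>n(H)$.
\item $H_{n(H)}(X^H,Y_{>H})$ is an extension of the torsion-free group $H_{n(H)-1}(Y_{>H})$ by $\mathbb{Z}$, hence torsion-free.
\item For finite $WH$, $C_{n(H)}/B_{n(H)}$ is then a $\mathbb{Z}$-free module of finite projective dimension, hence projective by Rim's theorem, and Wall's argument applies.
\end{itemize}

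The group statement then follows formally. The identity $[X]=[X*S(V)]-[S(V)]$ gives surjectivity. If $A*Z\simeq B*Z$ with $Z$ generalized, then $A*Z'\simeq B*Z'$ with $Z'=Z*S(V)$ a homotopy representation, which gives injectivity. So the theorem is intrinsically a stable statement, and the stabilization is needed to achieve strict monotonicity, not just $n(H)\ge 3$.
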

 
 It is an open problem whether the same statement holds for arbitrary compact Lie groups  (see \cite[Conjecture 4.5]{Fausk:2001} and \cite{Knutsen:2023}). The group $V'(G)$ is isomorphic to the Picard group  $\Pic (hSp^G )$ of invertible objects in the stable homotopy category of $G$-spectra (see \cite[Proposition 4.6]{Fausk:2001}), and the group $V(G)$ fits into an exact sequence 
 \[ 
 0 \to \Pic (A(G)) \to V(G) \to D(G) \to 0 
 \]
 where $\Pic (A(G))$ denotes the Picard group of the Burnside ring $A(G)$, and $D(G)$ denote the group described by Bauer \cite{Bauer:1988}.
 As a consequence of the above theorem, one obtains a description of  the Picard group $\Pic (hSp^G )$ when $G$ is a finite group.

For subsequent work on equivariant $G$-CW-complexes, tom Dieck's 1987 book ``Transformation Groups''  \cite{Dieck:1987} provided an equivariant framework based on modules and chain complexes over the  \emph{orbit category} of 
$G$, which includes tools from algebraic topology, stable homotopy, the Burnside ring, Bredon homology and Mackey functors (and much more). 

With this preparation, the reader is equipped to absorb further extensions of the algebra, and the connections between transformation groups and algebraic K- theory presented in the influential LNM1408 by W.~L\"uck   \cite{Luck:1989}. In this comprehensive exposition, the author introduces invariants such as the equivariant finiteness obstruction, Whitehead torsion, and Reidemeister torsion taking values in algebraic K-groups, and presents geometric applications to isovariant and equivariant versions of the $s$-cobordism theorems for $G$-manifolds. For the present survey, one of the highlights is that  (oriented) $G$-homotopy representations with the same dimension function are classified up to $G$-homotopy equivalence by the reduced Reidemeister torsion invariant (see  \cite[Corollary 20.39]{Luck:1989}).

\section{ Chain complexes over the orbit category}\label{sec:four}

Let $G$ be a finite group and $\cF$ be a family of subgroups of $G$ which is closed under conjugations and taking subgroups. 
The orbit category $\OrG$ is defined as the category whose objects are orbits of type
$G/K$, with $K \in \cF$, and where the morphisms from $G/K$ to $G/L$
are given by $G$-maps:
$$ \Mor _{\OrG} (G/K , G/L ) = \Map _G (G/K, G/L ).$$
 The special case of $\OrG$ where $\cF$ consists of all subgroups of $G$ is just called the \emph{orbit category} and denoted $\Or(G)$.
 
The category $\G _G= \OrG$ is a small category, and we can consider the
module category over $\G _G$ in the following sense. Let $R$ be a commutative ring with 1.
A \emph{(right) $\RG _G$-module} $M$ is a contravariant functor from $\G _G$ to
the category of $R$-modules. We denote the $R$-module $M(G/K)$
simply by $M(K)$ and write $M(f)\colon  M(L) \to M(K)$  for a $G$-map $f\colon 
G/K \to G/L$. 

The category of $\RG _G$-modules is an abelian category, so the usual concepts of homological algebra, such as kernel, direct sum,
exactness, projective module, etc., exist for $\RG_G$-modules.
A sequence of $\RG _G$-modules $0 \to A \to B \to C \to 0$
is \emph{exact} if and only if $$ 0 \to A(K) \to B(K) \to C(K) \to 0$$ is
an exact sequence of $R$-modules for every $K \in \cF$. For an $\RG_G$-module $M$ the $R$-module $M(K)$ can also be considered as an $RW_G(K)$-module in an obvious way where $W_G(K) = N_G(K)/K$. We will follow the convention in \cite{Luck:1989} and  consider $M(K)$ as a right $RW_G(K)$-module. In particular, we will consider the sequence above as an exact sequence of right $RW_G(K)$-modules.

For each $H \in \cF$, let $F_H:=R[G/H ^?]$ denote the $R\G_G$-module with values $F_H (K)=R[(G/H)^K]$ for every $K \in \cF$, and where for every  $G$-map $f: G/L \to G/K$, the induced map $F_H (f): R[(G/H)^K]\to R[(G/H)^L]$ is defined in the obvious way. By the Yoneda lemma, there is an isomorphism $$\Hom _{R\G_G} (R[G/H^?], M)\cong M(H)$$ for every $R\G_G$-module $M$. From this it is easy to show that the module $R[G/H^?]$ is a projective module in the usual sense, for each $H \in  \cF$. An $R\G_G$-module is called \emph{free} if it is isomorphic to a direct sum of $R\G_G$-modules of the form $R[G/H^?]$. It can be shown that an $R\G_G$-module is projective if and only if it is a direct summand of a free module. The further details about the properties of modules over the orbit category can be found in \cite{Hambleton:2013} (see also L\"uck \cite[\S 9,\S 17]{Luck:1989}  and tom Dieck \cite[\S 10-11]{Dieck:1987}).   

We consider chain complexes $\CC$ of $\RG _G$-modules, with respect to a given family $\cF$. When we say a chain complex we always mean a non-negative complex, so $\CC _i=0$ for $i<0$. We call a chain complex $\CC$ \emph{projective} (resp.~\emph{free}) if for all $i\geq 0$,
the modules $\CC_i$ are projective (resp.~free). We say that a chain complex $\CC$ is \emph{finite} if $\CC_i = 0$ for all $i > n$, for some $n\geq 0$, and the chain modules $\CC_i$ are all finitely generated $\RG_G$-modules. We usually work under this assumption.

We define the \emph{support} of a chain complex $\CC$ over $R\G_G$ as the family of subgroups $$\Supp(\CC) = \{ H \in \cF \vv \CC(H) \neq 0\}.$$

It is sometimes convenient to vary the family of subgroups. 

\begin{definition}
If $\cF \subset \cG$ are two families, the orbit category $\G _{G, \cF}=\Or_\cF G$ is a full-subcategory of $\Gamma _{G, \cG}=\Or_\cG G$. If $M$ is a module over $R\G_{G, \cF}$, then we define
$\Inc_\cF^\cG(M)(H) =M(H)$, if $H \in \cF$, and zero  otherwise. 
Similarly, 
for a module $N$ over $R\G_{G, \cG}$,  define $\res_\cF^\cG(N)(H) = N(H)$, for $H \in \cF$. We extend to maps and chain complexes similarly. Note that $\Supp(\Inc_\cF^\cG(\CC)) = \Supp(\CC)$, and $\Supp(\res_\cF^\cG(\DD)) = \Supp(\DD) \cap \cF$.  
\end{definition}

 Given a $G$-CW-complex $X$, 
there is an associated chain complex of $\RG_G$-modules over the family of all subgroups of $G$:
$$\uC{X}:   \quad  \cdots \to \uR{X_n}
\xrightarrow{\bd_{n}}  \uR{X_{n-1}} \rightarrow
\cdots\xrightarrow{\bd_{1}}  \uR{X_0} \to 0,$$ where $X_i$ denotes
the set of (oriented) $i$-dimensional cells in $X$ and $\uR{X_i}$ is the
$\RG_G$-module defined by $ \uR{X_i}(H)= R[X_i^H]$  for every $H \leq G$. We denote the
homology of this complex by $\uH{X}$. The chain complex $\CC (X^H; R)$ is actually defined for all subgroups $H \leq G$, but for a given family of subgroups $\cF$, we can restrict its values from $\Or(G)$ to the full sub-category $\OrG$.  

The smallest family containing all the isotropy subgroups  $\{G_x \vv x \in X\}$ is 
$$\Iso(X) = \{ H \leq G \vv X^H \neq \emptyset\}$$
and this motivates our notion of support for algebraic chain complexes.
In particular, we have 
$$ \Supp(\res_\cF(\uC{X})) = \cF \cap \Iso(X). $$  
If the family $\cF$ includes all the isotropy subgroups of $X$, then the complex $\uC{X}$ is a chain complex of free $\RG_G$-modules, hence projective $\RG_G$-modules, but otherwise the chain modules $\uR{X_n}$ may not be projective over $\RG _G$.

\section{ Dimension Functions}\label{sec:five}
Given a finite $G$-CW-complex $X$,  there is a
\emph{dimension function} $$\Dim X \colon  \Sub  \to \bZ,$$  given by $(\Dim X )(H)=\dim X^H$ for all $H \in \Sub$ where $\Sub$ denotes the set of all subgroups of $G$. By convention,  we set $\dim \emptyset = -1$ for the dimension of the empty set.
In a similar way, we define the following.

\begin{definition}\label{defn:Dim} The (chain) \emph{dimension function} of a finite-dimensional chain complex $\CC$ over $\RG_G$ is defined as the function $\Dim \CC \colon  \Sub \to \bZ$ which has the value
 $$(\Dim \CC )(H)=\dim \CC (H)$$ for all 
 $H \in \cF$, 
 where the \emph{dimension} of a chain complex of $R$-modules is defined as the largest integer $d$ such $C_d\neq 0$ (hence the zero complex has dimension $-1$). If 
$H \notin \cF$, then we set $(\Dim \CC)(H)=-1$. 
\end{definition}

The dimension function $\Dim \CC \colon \Sub \to \bZ$ is  \emph{conjugation-invariant}, meaning that it takes the same value on subgroups conjugate in $G$.  The term \emph{super class function} is often used for such functions. 

\begin{definition}\label{defn:superclass}
 The \emph{support} of a conjugation-invariant function $\un{n} : \Sub \to \bbZ$ is defined as the set 
$$\Supp (\un{n})=\{ H \leq G \colon \un{n} (H) \neq -1 \}.$$
We say that a conjugation-invariant function $\un{n}\colon \Sub  \to \bZ$ is \emph{supported on $\cF$}, if  $\Supp(\un{n}) \subseteq \cF$. Note that $\Supp (\CC)\subseteq \cF$ is the support of the dimension function $\Dim\CC$ of a chain complex $\CC$ over $\RG_G$. 
\end{definition}
 
In a similar way, we can define the \emph{homological dimension function} of a chain complex $\CC$ of $\RG_G$-modules
as the function $\HomDim \CC \colon  \Sub \to \bZ$ where for each $H\in \cF$, the integer 
$$(\HomDim \CC ) (H) = \hdim \CC(H)$$
is defined as the largest integer $d$ such that  $H_d (\CC (H))\neq 0$. If $H \notin \cF$, then we set $\un{n}(H) = -1$, as before.

Let us write $(H) \leq (K)$ whenever $g^{-1} Hg \leq K$ for some $g\in G$. Here $(H)$ denotes the set of subgroups conjugate to $H$ in $G$. The notation $(H) < (K)$ means that $(H)\leq (K)$ but $(H)\neq (K)$.

\begin{definition}\label{def:monotone}
We call a function $\un{n}\colon \Sub \to  \bZ$ \emph{monotone} if it
satisfies the property that $\un{n}(H) \geqslant \un{n}(K)$ whenever
$(H) \leq (K)$. We say that a monotone function  $\un{n}$ is
\emph{strictly monotone} if $\un{n}(H) > \un{n}(K)$, whenever $(H)< (K)$.  \qed 
\end{definition} 

We have the following:

\begin{lemma}\label{lem:monotone} The (chain) 
dimension function of every finite-dimensional projective chain complex $\CC$ of $\RG_G$-modules is monotone.
\end{lemma}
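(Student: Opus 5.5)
It suffices to show the following: if $P$ is a projective $\RG_G$-module, $H, K \in \cF$ with $(H) \leq (K)$, and $P(K) \neq 0$, then $P(H) \neq 0$. Indeed, applying this to $P = \CC_d$ with $d = (\Dim \CC)(K)$ gives $\CC_d(H) \neq 0$, so $(\Dim \CC)(H) \geq d$. If $(\Dim \CC)(K) = -1$ there is nothing to prove. If $K \notin \cF$, the value is $-1$ by convention. If $K \in \cF$ then $H \in \cF$, since $\cF$ is closed under conjugation and subgroups. Because the dimension function is conjugation-invariant, we may replace $H$ by a conjugate and assume $H \leq K$.

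First consider free modules. For $F_L = R[G/L^?]$ we have $F_L(K) = R[(G/L)^K]$. Since $H \leq K$ we have $(G/L)^K \subseteq (G/L)^H$, so $F_L(K) \neq 0$ implies $F_L(H) \neq 0$. A direct sum behaves the same way: if some summand is nonzero at $K$, then it is nonzero at $H$.

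A projective $P$ need not be free, so instead use the structure map $P(f)\colon P(K) \to P(H)$ induced by the projection $f\colon G/H \to G/K$, and show it is injective. Write $P \oplus Q = F$ with $F$ free. The map $P(f)$ is the restriction of $F(f)$ to the summand $P(K) \subseteq F(K)$, since both the inclusion $P \to F$ and the projection $F \to P$ are natural. So it suffices that $F(f)$ is injective. For $F = R[G/L^?]$, the map $F(f)$ is the map $R[(G/L)^K] \to R[(G/L)^H]$ induced by the inclusion of fixed-point sets. This map sends basis elements to distinct basis elements, so it is injective; injectivity is preserved by direct sums.

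It follows that $P(f)$ is injective, hence $P(K) \neq 0$ implies $P(H) \neq 0$, and $\Dim \CC$ is monotone. The only delicate point is the reduction from projective to free modules via injectivity of the restriction maps, rather than merely comparing supports. That step is handled by the naturality of the splitting, so I expect no real obstacle.
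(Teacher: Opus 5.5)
Your proof is correct, but it takes a different route from the paper.

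The paper invokes tom Dieck's structure theorem for projective $R\G_G$-modules: $P\cong\bigoplus_H E_HP_H$, with $P_H$ projective over $R[N_G(H)/H]$. From this it observes that $\CC_n(K)\neq 0$ forces a nonzero summand $E_HP_H$ with $(K)\le(H)$, and concludes that $\CC_n(L)\neq 0$. That last step tacitly uses that $E_HP_H(L)=P_H\otimes_{RW_G(H)}R\Map_G(G/L,G/H)$ is nonzero, which holds because $W_G(H)$ acts freely on the nonempty set $\Map_G(G/L,G/H)$.

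You avoid the splitting theorem. Your argument uses only three facts:
\begin{enumerate}
\item a projective module is a retract of a free one;
\item for $F=R[G/L^?]$, the structure map along $G/H\to G/K$ is induced by the inclusion of bases $(G/L)^K\subseteq(G/L)^H$;
\item injectivity passes to natural direct summands.
\end{enumerate}

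Your route is more elementary and self-contained. It also proves something stronger than monotonicity: for every projective $P$ and every $H\le K$ in $\cF$, the structure map $P(K)\to P(H)$ is injective (indeed split injective over $R$, by the same naturality argument). So the modules themselves, not just their supports, can only grow as one passes to smaller subgroups.

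The paper's route gives different information. It describes the support of a projective module exactly: the support is the union of the down-sets generated by those $H$ with $P_H\neq 0$. It also fits with the $E_H$ machinery the paper uses elsewhere, for instance in the inflation lemma.
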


\begin{proof} Let $(L)\leq (K)$. If $\un{n}(K)=-1$, then the inequality $\un{n}(L) \geq \un{n}(K)$ is clear. So assume $\un{n}(K)=n \neq -1$. Then $\CC_n (K) \neq 0$. By the decomposition theorem for projective $\RG_G$-modules \cite[Chap.~I, Theorem 11.18]{Dieck:1987}, every projective $\RG _G$-module  $P$ is of the form $P\cong \oplus _H E_H P_H$, where $H\in \cF$ and $P_H$ is a projective $N_G (H)/H$-module. Here the $R\G _G$-module $E_HP_H$ is defined by $$E_H P_H (?)=P_H \otimes _{RN_G(H)/H} R\Map_G(G/?, G/H).$$ Applying this decomposition theorem to $\CC _n$, we observe that $\CC_n$ must have a summand $E_H P_H$ with $(K)\leq (H)$. But then $\CC_n(L)\neq 0$, and hence $\un{n}(L) \geq \un{n}(K)$.  
\end{proof}

We are particularly interested in chain complexes which have the homology of a sphere when evaluated 
at every $K \in \cF$. To specify the restriction maps in dimension zero, we will consider chain complexes $\CC$ which are equipped with  an augmentation map $\varepsilon\colon \CC _0 \to \un{R}$ such that $\varepsilon \circ \bd _1=0$. Here $\un{R}$ denotes the constant functor, and we assume that $\varepsilon(H)$ is surjective for $H \in \Supp(\CC)$.
We often consider $\varepsilon$ as a chain map $\CC \to \un{R}$ by considering $\un{R}$ as a chain complex over $\RG_G$ which is concentrated at zero. We denote a chain complex with an augmentation as a pair $(\CC, \varepsilon)$.

By the \emph{reduced homology} of a complex $(\CC,\varepsilon)$, we always mean the homology of the augmented chain complex 
$$ \widetilde \CC = \{ \cdots \to \CC_n \xrightarrow{\bd_n} \cdots  \to \CC _2 \xrightarrow{\bd_2} \CC _1\xrightarrow{\bd _1}
 \CC _0\xrightarrow{\varepsilon} \un{R} \to 0\}$$ 
where $\un{R}$ is considered to be at dimension $-1$. Note that the complex $\widetilde  \CC$ is the $-1$ shift of the mapping cone of the chain map $\varepsilon \colon \CC \to \un{R}$.    

\begin{definition}\label{def:Rhomologysphere}
 Let $\un{n}$ be a conjugation-invariant function supported on $\cF$, and let $\CC$ be a chain complex over $\RG_G$ with respect to a family $\cF$ of subgroups.  
 \begin{enumerate}
\item  We say that $\CC$ is an \emph{$R$-homology $\un {n}$-sphere} if there is an augmentation map $\varepsilon\colon \CC \to \un{R}$ such that the reduced homology of $\CC (K)$ is  the same as that of an $\un{n}(K)$-sphere (with coefficients in $R$) for all $K \in \cF$.  
\item We say that $\CC$ is \emph{oriented} if the $W_G
(K)$-action  on the homology of $\CC (K)$ is trivial for all $K\in \cF$. 
\end{enumerate}
\end{definition}

Note that we do not assume that the dimension function is strictly monotone as in Definition II.10.1 in \cite{Dieck:1987}. 

%In transformation group theory, a $G$-CW-complex $X$ is called a \emph{homotopy representation} if $X^H$ is homotopy equivalent to the sphere $S^{\un{n}(H)}$ where $\un{n}(H)=\dim X^H$ for every $H\leq G$ (see tom Dieck  \cite[Section II.10]{tomDieck2}). We now introduce an algebraic analogue of this useful notion for chain complexes over the orbit category.  

\section{ Algebraic Homotopy Representations}\label{sec:six}
In order to put our questions in a more general setting,  we study an algebraic version of tom Dieck's geometric homotopy representations for $R$-module chain complexes over the orbit category $\G _G =\Or _{\cF} G$, with respect to a ring $R$ and a family $\cF$ of subgroups of $G$. We usually work with $R = \bZp$, for some prime $p$, or $R=\bbZ$. This theory was developed by L\"uck \cite[\S 9, \S 17]{Luck:1989} and tom Dieck \cite[\S 10-11]{Dieck:1987}.
 
The homological dimensions of the various fixed sets are encoded in a conjugation-invariant function  
$\un{n} \colon \Sub \to \bbZ$,  where $\Sub$ denotes the set of subgroups of $G$. The function $\un{n}$ is \emph{supported on the family $\cF$},  if  $\un{n}(H) = -1 $ for $H \notin \cF$ (see Definition \ref{defn:superclass}). 

In \cite[II.10]{Dieck:1987}, there is a list of properties that are satisfied by geometric homotopy representations. We will use algebraic versions of these properties to define an analogous notion for chain complexes.

\begin{definition}\label{def:algrep} Let $\CC$ be a finite 
projective  chain complex over $\RG_G$, which is an $R$-homology $\un{n}$-sphere.
We say $\CC$ is an \emph{algebraic homotopy representation} (over $R$) if 
\begin{enumerate}
\item The function $\un{n}$ is a monotone
function.
\item If $H,K \in \cF$ are such that $n=\un{n}(K)=\un{n}(H)$,
then for every $G$-map $f \colon  G/H \to G/K$ the induced map $\CC(f)\colon \CC (K) \to \CC (H)$ is an $R$-homology isomorphism.
\item Suppose $H, K, L\in \cF$ are such that $H \leq K,L$ and
let $M=\langle K, L \rangle$ be the subgroup of $G$ generated by $K$ and $L$. If
$n=\un{n}(H)=\un{n}(K)=\un{n}(L) >-1$, then $M \in \cF$ and
$n=\un{n}(M)$.  
\end{enumerate}
\end{definition}

 These are the internal homological conditions observed for representation spheres, and more generally, these conditions are all necessary for $\CC$ to be chain homotopy equivalent to a geometric homotopy representation (see \cite{Laitinen:1986} and
  \cite[Section 2]{Hambleton:2014}).  Note that conditions (ii) and (iii) of Definition \ref{def:algrep} are automatic if the dimension function $\un{n}$ is strictly monotone. Under condition (iii),  the isotropy family $\cF$ has an important maximality property.

\begin{proposition}\label{cor:max} Let $\un{n}$ be a conjugation-invariant function and let $\CC$ be a %
projective chain complex of $\RG_G$-modules, which is an $R$-homology $\un{n}$-sphere. If condition \textup{(iii)} holds, then for each $H \in \cF$, the set of subgroups $\cF_H = \{ K \in \cF \vv (H) \leq (K),\ \un{n}(K) = \un{n}(H)> -1\}$ has a unique maximal element, up to conjugation. 
\end{proposition}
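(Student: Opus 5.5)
Since $G$ is finite, $\cF_H$ is a finite nonempty set ($H\in\cF_H$ whenever $\un{n}(H)>-1$; if $\un{n}(H)=-1$ the set is empty and the statement is vacuous). The relation $(K)\le(K')$ is a partial order on conjugacy classes, so maximal elements up to conjugation exist. The real content is uniqueness, and I would get it by showing that any two members of $\cF_H$ are dominated by a third. Condition (iii) is exactly the tool for this.

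First I normalize. Let $K, L \in \cF_H$ with $\un{n}(K)=\un{n}(L)=\un{n}(H)=n>-1$. Choose $g,h\in G$ with $g^{-1}Hg\le K$ and $h^{-1}Hh\le L$. Replacing $K$ by $gKg^{-1}$ and $L$ by $hLh^{-1}$ does not change conjugacy classes or values of $\un{n}$, since $\un{n}$ is conjugation-invariant. Also $\cF$ is closed under conjugation, so both stay in $\cF$. We may therefore assume $H\le K$ and $H\le L$ as actual subgroups.

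Now condition (iii) applies to $H\le K,L$ with $n=\un{n}(H)=\un{n}(K)=\un{n}(L)>-1$. It gives $M=\langle K,L\rangle\in\cF$ with $\un{n}(M)=n$. Since $H\le K\le M$, we have $(H)\le(M)$, so $M\in\cF_H$, and $(K),(L)\le(M)$. So any two elements of $\cF_H$ have an upper bound in $\cF_H$, up to conjugacy.

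Finally, let $K$ and $L$ be maximal elements of $\cF_H$ up to conjugation. By the previous step there is $M\in\cF_H$ with $(K)\le(M)$ and $(L)\le(M)$, and maximality forces $(K)=(M)=(L)$. Hence the maximal element is unique up to conjugation.

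The only delicate point is the normalization step. Condition (iii) needs honest subgroup containments $H\le K, L$, whereas membership in $\cF_H$ only gives containment up to conjugacy. One must conjugate $K$ and $L$ separately, rather than $H$, so that a single common $H$ sits inside both. Monotonicity and the projectivity of $\CC$ are not needed beyond ensuring the hypotheses make sense.
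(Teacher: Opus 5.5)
Your proof is correct and matches the paper's, which only says the result is ``clear by induction from the statement of condition (iii)''. Your argument (conjugate $K$ and $L$ separately so that $H\le K,L$, apply (iii) to get $\langle K,L\rangle\in\cF_H$ as a common upper bound, then use maximality) fills in that remark. One small quibble: when $\un{n}(H)=-1$ the set $\cF_H$ is empty and so has no maximal element, so that case is not vacuous; the statement is implicitly meant only for $H$ with $\un{n}(H)>-1$.
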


\begin{proof} Clear by induction from the statement of condition (iii). 
\end{proof}

In the remainder of this section we will assume that $R$ is a principal ideal domain. 
 
\begin{proposition}
\label{prop:tightness conditions} Let $\un{n}$ be a conjugation-invariant function and $\CC$ be 
a finite projective  chain complex over $\RG_G$, which is an $R$-homology $\un{n}$-sphere. Assume that $R$ is a principal ideal domain. If the equality $\un{n} = \Dim \CC$ holds, then $\CC$ is an algebraic homotopy representation.
\end{proposition}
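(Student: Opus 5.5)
The plan is to verify the three conditions of Definition~\ref{def:algrep} in turn. Each one will come from looking at the top chain module $\CC_n$, using that $\un{n}=\Dim\CC$. Condition (i) is immediate: by hypothesis $\un{n}=\Dim\CC$, and Lemma~\ref{lem:monotone} says that the chain dimension function of a finite projective complex is monotone.

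\emph{Structural fact about restriction maps.} Let $f\colon G/H\to G/K$ be a $G$-map, so $H\le gKg^{-1}$. For a free module $R[G/Q^?]$, the induced map $R[(G/Q)^K]\to R[(G/Q)^H]$ is, up to translation by $g$, the inclusion of a subset of a basis. It is therefore a split injection of $R$-modules with free (in particular torsion-free) cokernel. Every projective $\RG_G$-module $P$ is a summand of a free module, and the idempotent defining the summand commutes with restriction maps. Hence $P(f)\colon P(K)\to P(H)$ is also a split injection with torsion-free cokernel.

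\emph{Condition (ii).} Suppose $\un{n}(H)=\un{n}(K)=n$. If $n=-1$ both complexes are zero and there is nothing to prove. Otherwise $n$ is the top degree of both $\CC(K)$ and $\CC(H)$. Reduced homology vanishes below $n$, so it suffices to show $\CC(f)$ is an isomorphism on the top reduced homology $\widetilde Z_n$: these are the $n$-cycles, with $\ker\varepsilon$ in place of $Z_0$ when $n=0$. Each of $\widetilde Z_n(K)$ and $\widetilde Z_n(H)$ is isomorphic to $R$.

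The map $\widetilde Z_n(K)\to \widetilde Z_n(H)$ is injective, being a restriction of $\CC_n(f)$. Moreover $\widetilde Z_n(K)=\widetilde Z_n(H)\cap \CC_n(K)$, since $\CC(f)$ is a chain map compatible with $\varepsilon$ and $\CC_{n-1}(f)$ is injective. So the cokernel of $\widetilde Z_n(K)\to\widetilde Z_n(H)$ embeds in $\CC_n(H)/\CC_n(K)$, which is torsion-free. An injective map $R\to R$ is multiplication by some nonzero $a$, with cokernel $R/aR$. Over a PID, $R/aR$ is torsion-free only when $a$ is a unit. Hence the map is an isomorphism. The main subtle point is precisely this torsion-freeness, which rules out maps like multiplication by $p$.

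\emph{Condition (iii).} Suppose $H\le K,L$ with common value $n>-1$, and put $M=\langle K,L\rangle$. Embed $\CC_n$ as a summand of a free module $F=R[X^?]$, where $X$ is a finite disjoint union of orbits $G/Q$ with $Q\in\cF$, and let $e$ be the projection of $F$ onto $\CC_n$. Let $z$ generate $\widetilde Z_n(H)\subset\CC_n(H)\subset R[X^H]$.

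By (ii), $z$ lies in the images of $\CC_n(K)\subset R[X^K]$ and of $\CC_n(L)\subset R[X^L]$. So the support of $z$ lies in $X^K\cap X^L=X^M$. Since $z\neq0$, $X^M\neq\emptyset$, so $M$ is subconjugate to some $Q\in\cF$, and therefore $M\in\cF$. Now $z$ lies in the image of $R[X^M]$, and $e$ commutes with restriction and fixes $z$. Hence $z$ lies in the image of $\CC_n(M)$, so $\CC_n(M)\neq0$ and $\Dim\CC(M)\ge n$. Monotonicity, applied to $H\le M$, gives $\Dim\CC(M)\le n$. Therefore $\un{n}(M)=\Dim\CC(M)=n$, which is condition (iii).
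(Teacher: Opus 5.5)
Your proof is correct and complete. The survey states this proposition without proof (it is quoted from \cite{Hambleton:2014}), so there is no in-text argument to compare against, and your direct verification establishes all three conditions.

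\begin{enumerate}
\item Condition (i) is exactly Lemma~\ref{lem:monotone}.
\item For (ii), the key input is that restriction maps of projective $\RG_G$-modules are split injections whose cokernels embed in free $R$-modules. Together with $\widetilde Z_n(K)=\widetilde Z_n(H)\cap \CC_n(K)$, this forces the injective map $R\cong\widetilde Z_n(K)\to\widetilde Z_n(H)\cong R$ to be multiplication by a unit.
\item For (iii), you use the support argument $X^K\cap X^L=X^{\langle K,L\rangle}$ inside a free module $R[X^?]$ that contains $\CC_n$ as a summand. Combined with naturality of the idempotent, this produces a nonzero element of $\CC_n(M)$, and monotonicity then gives $\Dim\CC(M)=n$.
\end{enumerate}

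As a side remark, your argument only uses that $R$ is an integral domain, to see that $R/aR$ is nonzero torsion for a nonzero nonunit $a$. So the PID hypothesis is not actually needed for this proposition.
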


When the equality $\un{n} = \Dim \CC$ holds, we say that $\CC$ is a \emph{tight} complex.
 In general, $\un{n}(K) \leq \Dim \CC(K)$ for each $K \in \cF$, and one would expect obstructions to finding a tight complex which is chain homotopy equivalent to a given $R$-homology $\un{n}$-sphere. The following result shows the relevance of the internal homological conditions for this question. 
 
\begin{theorem}[{\cite[Theorem A]{Hambleton:2014}}]\label{thm:2014thma} Let  $G$ be a finite group, and  $\cF$ be a family of subgroups of $G$.  Suppose that 
 \begin{enumerate}
\item $R$ is a principal ideal domain,
\item $\un{n}\colon \Sub \to \bZ$ is a conjugation-invariant function supported on $\cF$, and
\item $\CC$ is a finite chain complex of free $\RG_G$-modules which is an
$R$-homology $\un{n}$-sphere.
\end{enumerate}
Then $\CC$ is chain homotopy equivalent to a finite
free chain complex $\DD$ satisfying $\un{n}=\Dim \DD$ if and only if $\CC$ is an algebraic homotopy representation. 
\end{theorem}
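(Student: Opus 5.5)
The "only if" direction is the easy one, and I would do it first. Suppose $\CC \simeq \DD$ with $\DD$ finite free and $\Dim \DD = \un{n}$. Then $\DD$ is an $R$-homology $\un{n}$-sphere, because the homology of $\CC(K)$ and $\DD(K)$ agree for each $K \in \cF$. So $\DD$ is a tight complex, and Proposition~\ref{prop:tightness conditions} (here $R$ is a PID) shows that $\DD$ is an algebraic homotopy representation. Conditions (i)--(iii) of Definition~\ref{def:algrep} are statements about $\un{n}$ and about maps induced on homology by $G$-maps $G/H \to G/K$. A chain homotopy equivalence $\CC \to \DD$ is natural in $G/K$ and a homology isomorphism at each $K$, so these conditions transfer from $\DD$ to $\CC$.

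For the "if" direction I would modify $\CC$ one conjugacy class at a time. List the conjugacy classes $(K_1), \dots, (K_r)$ of $\cF$ so that larger subgroups come first, i.e.\ $(K_i) < (K_j)$ implies $j < i$. By the decomposition $\CC_m \cong \oplus_H E_H P_H$ used in Lemma~\ref{lem:monotone}, $\Dim \CC(K)$ is the top degree in which some summand $E_H P_H$ with $(K) \leq (H)$ occurs. Inductively, I would produce $\CC^{(i)} \simeq \CC$ such that no summand $E_{K_j}P$ with $j \leq i$ sits in degree $> \un{n}(K_j)$. Monotonicity of $\un{n}$ (condition (i)) makes these constraints compatible, and at the end this gives $\Dim \DD \leq \un{n}$. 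The reverse inequality $\Dim \DD \geq \un{n}$ is automatic from the homology.

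At the step for $K = K_i$, I would pass to the complex of $RW_G(K)$-modules $\CC(K)/\CC_{>K}(K)$. Here $\CC_{>K}(K)$ is the image of the summands generated at subgroups $H$ with $(K) < (H)$; by induction it already satisfies the dimension bounds. I would use conditions (ii) and (iii), together with Proposition~\ref{cor:max}, to compute the homology of this relative complex. Let $M$ be the unique maximal element of $\cF_K$.
\begin{enumerate}
\item If $\un{n}(K) > \un{n}(H)$ for all $(H) > (K)$, the relative complex has homology concentrated in degree $\leq \un{n}(K)$.
\item Otherwise, condition (ii) shows that $\CC(M) \to \CC(K)$ is a homology isomorphism. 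So the relative complex for the singular part is acyclic in degrees $> \un{n}(K)$.
\end{enumerate}
In either case the relative complex is a finite projective $RW_G(K)$-complex whose homology vanishes above $\un{n}(K)$. The exact tail $0 \to P_m \to \dots \to P_{\un{n}(K)+1} \to B_{\un{n}(K)} \to 0$ then splits inductively. I would then cut it off, making the corresponding modification of $\CC$ by folding (elementary expansions/collapses of $E_K$-summands) to get $\CC^{(i)}$.

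The main obstacle is the truncation step. One needs the new top module $P_{\un{n}(K)}/B_{\un{n}(K)}$, and hence the replacement of $E_K$-summands in degree $\un{n}(K)$, to remain projective, and then free. Projectivity of $B_{\un{n}(K)}$ alone does not make it a direct summand of $P_{\un{n}(K)}$. I would first try to show that the inclusion $B_{\un{n}(K)} \to P_{\un{n}(K)}$ splits by exploiting that $\CC(K)$ has the homology of an $\un{n}(K)$-sphere over the PID $R$, so the relevant cokernel is $R$-torsion free. I would then argue via an $\Ext$-vanishing over $RW_G(K)$ that it splits equivariantly. Finally, I would restore freeness by adding elementary complexes $E_K F \xrightarrow{=} E_K F$ in adjacent degrees (an Eilenberg-type stabilization), which changes neither the chain homotopy type nor the dimension function.
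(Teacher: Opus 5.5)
Your overall scheme is workable: top-down induction over conjugacy classes, passing to $S_K\CC=\CC(K)/\CC_{>K}(K)$ over $RW_G(K)$, truncating, and re-gluing along upward-closed subfamilies. Your ``only if'' direction and Case (1) are also fine. But the step that carries the real content, the homology computation in your Case (2), is not justified, and the reason you give does not yield it.

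Condition (ii) for $G/K\to G/M$ only says that $\CC(K)/\CC(M)$ is acyclic. The complex you must control is $\CC(K)/\CC_{>K}(K)$. From the triple $\CC(M)\subseteq\CC_{>K}(K)\subseteq\CC(K)$ you get $H_i(S_K\CC)\cong H_{i-1}(\CC_{>K}(K),\CC(M))$. So, with $n=\un{n}(K)$, vanishing of $H_{n+1}(S_K\CC)$ is equivalent to $H_n(\CC_{>K}(K),\CC(M))=0$. The singular part $\CC_{>K}(K)=\sum_{L>K}\CC(L)$ is in general much larger than $\CC(M)$. To prove this vanishing you need three things:
\begin{enumerate}
\item Use (iii) with $H=K$ to get an honest maximal subgroup $M\geq K$, normalized by $N_G(K)$, such that every $L>K$ with $L\not\leq M$ has $\un{n}(L)<n$.
\item Use freeness of $\CC$ to regard the $\CC(L)$ as basis-spanned subcomplexes of $\CC(K)$ with $\CC(L_1)\cap\CC(L_2)=\CC(\langle L_1,L_2\rangle)$. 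Then run a Mayer--Vietoris induction, using (ii) for \emph{every} $L$ with $K<L\leq M$, to show that $U=\sum_{K<L\leq M}\CC(L)$ satisfies $H_*(U,\CC(M))=0$.
\item Use the inductive bound $\dim Y\leq n-1$ for $Y=\sum_{L>K,\,L\not\leq M}\CC(L)$.
\end{enumerate}
Then $H_i(S_K\CC)\cong H_{i-1}(Y,Y\cap U)$. This vanishes for $i>n$ and, for $i=n$, is a module of top-dimensional relative cycles, hence $R$-torsion-free.

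That last point is the second gap: the source of torsion-freeness. From $0\to H_n(S_K\CC)\to P_n/B_n\to B_{n-1}\to 0$, the module $P_n/B_n$ is $R$-torsion-free exactly when $H_n(S_K\CC)$ is. This is necessary for projectivity, and it depends on the singular part, not on $\CC(K)$ being a homology sphere. In Case (1) it comes from $\dim\CC_{>K}(K)\leq n-1$; in Case (2) it comes from the computation above.

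Once torsion-freeness is in hand, the splitting is the standard fact that $\Ext^1_{RW_G(K)}(L,Q)=0$ for an $R$-lattice $L$ and a finitely generated projective $Q$. The reason is that $RW_G(K)\cong\Hom_R(RW_G(K),R)$ is coinduced, so $\Ext^1_{RW_G(K)}(L,RW_G(K))\cong\Ext^1_R(L,R)=0$.

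For freeness, note that $P_n/B_n$ is stably free, because $B_n$ has the finite free resolution $P_m\to\cdots\to P_{n+1}$. So one finitely generated $E_KF\xrightarrow{=}E_KF$ suffices; an Eilenberg-type swindle would destroy finiteness.
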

Theorem \ref{thm:2014thma} was motivated by \cite[Theorem 8.10]{Hambleton:2013}, which states that a finite chain complex of free $\bbZ\G_G$-modules can be realized by a geometric $G$-CW-complex if it is a tight homology $\un{n}$-sphere such that $\un{n} (H) \geq 3$ for all $H \in \cF$.  Upon combining these two statements, we get the following geometric realization result.

\begin{corollary}[{\cite[Corollary B]{Hambleton:2014}}]\label{cor:2014corb}
Let $\CC$ be a finite  chain complex of free $\ZG_G$-modules which is a
homology $\un{n}$-sphere. 
If $\CC$ is an  algebraic homotopy representation, 
and  in addition, if $\un{n}
(K) \geq 3$ for all $K \in \cF$, then there is a finite $G$-CW-complex $X$, with isotropy in $\cF$, such that $\uCZ{X}$ is chain homotopy equivalent to
$\CC$ as chain complexes of $\ZG_G$-modules. 
\end{corollary}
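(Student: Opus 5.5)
The statement is Corollary B: a finite free $\ZG_G$-complex $\CC$ that is a homology $\un{n}$-sphere, an algebraic homotopy representation, and has $\un{n}(K)\geq 3$ on $\cF$ is realized by a finite $G$-CW-complex with isotropy in $\cF$. The plan is to compose two earlier results. First make $\CC$ tight using Theorem \ref{thm:2014thma}, then realize the tight complex geometrically using \cite[Theorem 8.10]{Hambleton:2013}.

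The first step applies Theorem \ref{thm:2014thma} with $R=\bZ$, which is a principal ideal domain. Its other hypotheses hold as given: $\un{n}$ is conjugation-invariant and supported on $\cF$, and $\CC$ is a finite free $\ZG_G$-complex that is a homology $\un{n}$-sphere. Since $\CC$ is an algebraic homotopy representation, the ``if'' direction gives a finite free chain complex $\DD$ with $\CC\simeq\DD$ and $\Dim\DD=\un{n}$. The augmentation transfers along the chain homotopy equivalence, since $\varepsilon\colon\CC_0\to\un{\bZ}$ composed with the equivalence gives a compatible augmentation on $\DD$. Hence $\DD$ is again a homology $\un{n}$-sphere, and it is tight.

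The second step applies \cite[Theorem 8.10]{Hambleton:2013} to $\DD$. This needs $\DD$ to be a finite free tight homology $\un{n}$-sphere with $\un{n}(H)\geq 3$ for all $H\in\cF$, and both conditions were just established. It produces a finite $G$-CW-complex $X$ with $\uCZ{X}\simeq\DD$. Composing the two equivalences gives $\uCZ{X}\simeq\CC$ over $\ZG_G$.

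It remains to check that the isotropy of $X$ lies in $\cF$. The realized complex has cells of orbit types $G/H$ exactly where free summands $\bZ[G/H^?]$ with $H\in\cF$ appear in $\DD$, since $\DD$ is a free complex over $\Or_\cF G$. Equivalently, $\Iso(X)=\Supp(\DD)\subseteq\cF$. I expect this bookkeeping, together with confirming that the hypotheses of Theorem 8.10 (for instance, any orientability or connectivity assumptions implicit there) are met by $\DD$, to be the only real obstacle. The substantive work is already contained in the two cited results.
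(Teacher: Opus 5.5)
Your proposal is correct and follows the paper's own argument. The paper obtains Corollary B in the same two steps: apply Theorem \ref{thm:2014thma} with $R=\bZ$ to replace $\CC$ by a chain homotopy equivalent finite free tight complex $\DD$ with $\Dim\DD=\un{n}$, then invoke the realization theorem \cite[Theorem 8.10]{Hambleton:2013} for tight homology $\un{n}$-spheres with $\un{n}(K)\geq 3$. The bookkeeping you flag (transferring the augmentation, and isotropy lying in $\cF$) is routine and is already handled by the cited results.
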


We conclude this section by stating an algebraic version of a well-known theorem in transformation groups:  the  dimension function of a homotopy representation satisfies certain conditions called the Borel-Smith conditions (see  
\cite[Thm. 2.3 in Chapter XIII]{Borel:1960} or \cite[III.5]{Dieck:1987}).

\begin{theorem}[{\cite[Theorem C]{Hambleton:2014}}] \label{thm:IntroBorel-Smith}
Let $G$ be a finite group, 
$R=\bbF_p$, and $\cF$ be a given family of subgroups of $G$. If $\CC$ is a finite projective chain complex over $\RG _G$, which is an $R$-homology
$\un{n}$-sphere, then the function $\un{n}$ satisfies the Borel-Smith conditions at the prime $p$.  
\end{theorem}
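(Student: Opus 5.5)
The plan is to reduce the statement to sections $K/H$ of $p$-power order and then run an algebraic version of the classical Borel--Smith argument. That argument replaces $G$-spaces by the complex $\CC$ and the singular set by the subcomplex coming from larger isotropy. Recall that the Borel--Smith conditions at $p$ are imposed on subquotients $H \trianglelefteq K \le G$ of the following kinds:
\begin{enumerate}[(a)]
\item $K/H\cong \bZ/p$ with $p$ odd, where $\un{n}(H)-\un{n}(K)$ must be even;
\item $K/H\cong \bZ/p\times\bZ/p$, where $\un{n}(H)-\un{n}(K)=\sum_{H<L<K}(\un{n}(L)-\un{n}(K))$;
\item $K/H\cong \bZ/4$ with $L/H$ the subgroup of order $2$, where $\un{n}(H)-\un{n}(L)$ must be even;
\item $K/H\cong Q_8$ with $L/H$ the centre, where $\un{n}(H)-\un{n}(L)$ must be divisible by $4$.
\end{enumerate}
So it suffices to produce, for each such section $Q=K/H$, a finite projective complex over $\bbF_p\G_Q$, taken with respect to all subgroups of $Q$, which is a homology sphere with dimension function $L/H\mapsto \un{n}(L)$.

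\textbf{Step 1 (restriction and fixed points).} First restrict $\CC$ from $G$ to $K$. The restriction of a free module $\bbF_p[G/S^?]$ decomposes by the double coset formula as a sum of modules $\bbF_p[K/S'^?]$, so projectivity is preserved. Next apply the functor $\Phi^H$ that sends a $K$-module $M$ to the $Q$-module $L/H\mapsto M(L)$ for $H\le L\le K$. On a free module $\bbF_p[K/S^?]$ this gives $\bbF_p[(K/S)^{H}\,{}^?]$, and $(K/S)^H$ is a finite $N_K(H)$-set, hence a finite $K/H$-set. Therefore $\Phi^H\CC$ is again finite projective over $\bbF_p\G_Q$, and it is an $\bbF_p$-homology sphere with the required dimension function.

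\textbf{Step 2 (the free relative part).} For a $p$-group $Q$ and each subgroup $L$, let $\CC^{>L}(L)\subseteq \CC(L)$ be the image of the summands $E_SP_S$ of the decomposition \cite[I.11.18]{Dieck:1987} with $(L)<(S)$. The quotient $\CC(L)/\CC^{>L}(L)$ is then a finite complex of projective, hence free, $\bbF_p[W_Q(L)]$-modules. This is the algebraic replacement for the statement that the action of $W_Q(L)$ on $X^L\smallsetminus X^{>L}$ is free. Because $Q$ is a $p$-group acting on $\bbF_p$-homology, orientability is automatic, since $\mathrm{GL}_1(\bbF_p)$ has order prime to $p$.

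\textbf{Step 3 (the individual conditions).} For (a), (c) and (d), consider the pair $(\CC(H),\CC(L'))$, where $L'$ is the subgroup being compared to $H$. The quotient is free over the relevant group: over $\bZ/p$, or over $Q/(L/H)$ after the further reduction in case (d). From the long exact sequence, its homology is that of the pair $(S^{\un{n}(H)}, S^{\un{n}(L')})$, which is concentrated in two degrees. A finite free complex with homology of that shape forces the difference of the degrees to be a multiple of the cohomological period, which can be read off from the Tate cohomology spectral sequence. This gives period $2$ for $\bZ/p$ with $p$ odd and for the $\bZ/4$ case, and period $4$ for $Q_8$. For (b), use Borel cohomology $H^*_Q(\CC)=H^*(\Hom(\CC(1)\otimes P_*,\bbF_p)^Q)$, where $P_*$ is a projective resolution. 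By Step 2 the relative parts are free, so there is a localization theorem: after inverting the Euler classes, restriction to $\CC^{>1}$ becomes an isomorphism. Compare the Euler class of the sphere $\CC(1)$ with the product of the Euler classes attached to the $p+1$ subgroups of order $p$ and to the fixed part $\CC(Q)$. Their degrees give exactly the required sum formula.

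I expect the main obstacle to be condition (b), in particular setting up the localization theorem and Euler classes purely algebraically for complexes over the orbit category. The plan for it is to filter $\CC(1)$ by isotropy, $\CC(Q)\subseteq\CC^{\ge \text{order } p}\subseteq\CC(1)$, and show that each filtration quotient has Borel cohomology killed by the appropriate Euler class. This follows from the freeness in Step 2, because free $\bbF_p[\bZ/p]$-complexes have vanishing Tate cohomology. Once that is in place, a rank count of the $H^*(BQ)$-module $H^*_Q(\CC)$ in large degrees yields the formula.
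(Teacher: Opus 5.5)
Your architecture matches the paper's. Restricting to $K$ and then applying your $\Phi^H$, which is exactly the deflation $\Def^K_{K/H}$ of Section~7, reduces everything to a finite projective $\bbF_p$-homology sphere over $\bbF_p\G_{K/H}$; your computation of $(K/S)^H$ correctly shows that deflation preserves projectives. The paper then also concludes with spectral sequence arguments on that reduced complex. Two steps, however, are wrong or incomplete as written.

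\textbf{Cases (c) and (d).} The freeness you need is over $K/H$ itself, not over $Q/(L/H)$. The latter is $\bZ/2$ or $(\bZ/2)^2$, and freeness over it gives no periodicity at all. The fact that makes the argument work, and which you should state, is that $L/H$ lies in every nontrivial subgroup of $\bZ/4$ and of $Q_8$. So every restriction into $\CC(H)$ from a strictly larger subgroup factors through $\CC(L)$. Hence $\operatorname{im}(\CC(L)\to\CC(H))$ is the whole singular part from Step~2, and the quotient is free over $K/H$, with period $2$ resp.\ $4$. Also, define $\CC^{>L}(L)$ as the sum of the images of the restriction maps rather than through a chosen splitting, so that it is visibly a subcomplex.

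\textbf{Case (b).} Knowing that the filtration quotients are killed by powers of Euler classes gives only the localization $H^*_Q(\CC(1))[e^{-1}]\cong H^*_Q(\CC(Q))[e^{-1}]$, where $e$ is the product of the $p+1$ Euler classes. (Restriction to $\CC^{>1}(1)$ is already an isomorphism in high degrees with nothing inverted.) This does not produce the number $\sum_i(\un n(H_i)-\un n(Q))$, and comparing degrees of Euler classes presupposes exactly that number. The missing input is quantitative:
\begin{enumerate}
\item The quotient $\CC^{\ge p}(1)/\CC(Q)$ splits as a direct sum of complexes $\CC(H_i)/\CC(Q)$. Each is inflated from a finite free $\bbF_p[Q/H_i]$-complex with the homology of the pair $(S^{\un n(H_i)},S^{\un n(Q)})$.
\item The $\bZ/p$ spectral sequence shows that its orbit complex has total dimension $\un n(H_i)-\un n(Q)$. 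When $\un n(H_i)=\un n(Q)$ this uses that $\CC(Q)\to\CC(H_i)$ is a homology isomorphism, which again follows from $\bZ/p$-localization.
\item By K\"unneth over $Q=H_i\times Q/H_i$, the Borel cohomology of each summand therefore has dimension $\un n(H_i)-\un n(Q)$ in every large degree.
\item On the other side, suspend if necessary so that $\CC(Q)\neq 0$. Then $H^*_Q(\CC(1))$ and $H^*_Q(\CC(Q))$ are free over $H^*(BQ)$, on classes in degrees $0,\un n(1)$ and $0,\un n(Q)$ respectively.
\item Since $\dim H^m(BQ;\bbF_p)=m+1$ for every $p$, their dimensions differ by $\un n(1)-\un n(Q)$ in large degrees. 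The restriction between them is injective, because its kernel is $e$-torsion in a free module.
\end{enumerate}
The long exact sequence of the pair then yields $\un n(1)-\un n(Q)=\sum_i(\un n(H_i)-\un n(Q))$.
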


\begin{remark}
 The fact that the dimension function of  an algebraic $\un{n}$-homology sphere satisfies the Borel-Smith conditions suggests that more of the classical results on finite group actions on spheres might hold for finite projective chain complexes over a suitable orbit category. For example, one could ask for an algebraic version of the results of Dotzel-Hamrick  \cite{Dotzel:1981}
on $p$-groups. 
\end{remark}

\begin{example}\label{ex:Qdp}
An important test case for groups acting  on spheres, or on products of spheres \cite{Adem:2001}, is the rank two group $\Qd(p)=(\cy p \times \cy
p)\rtimes SL_2(p)$. At present, it is not known whether $\Qd(p)$ can act freely on a product of  two spheres, but  \" Unl\" u \cite{Unlu:2004} showed that 
$\Qd(p)$ does not act on a finite complex homotopy equivalent to a sphere with rank one isotropy.  The question of whether or not $\Qd(p)$ acts freely on a finite CW-complex homotopy equivalent to a product of two spheres is one of the most important open problems on free actions on products of spheres.
\end{example}

 As an application of Corollary \ref{cor:2014corb}, we can show that such a finite projective chain complex over $\RG_G$ does not exist for the group $G=\Qd(p)$ with respect to the family $\cF$ of rank one subgroups (see \cite[Proposition 5.14]{Hambleton:2014}).
This is an important group theoretic constraint  on the existence question for geometric homotopy representations with rank one isotropy.

One of the main ideas in the proof of Theorem \ref{thm:IntroBorel-Smith} is the reduction of a given chain complex of $R\G_G$-module $\CC$ to a chain complex over $R\G_{K/N}$ for a subquotient $K/N$ 
appearing in the Borel-Smith conditions. For this reduction, we introduce \emph{inflation} and \emph{deflation} of modules over the orbit category, via restriction and induction associated to a certain functor $F$ (see Section \ref{sec:seven}). Then we use spectral sequence arguments to conclude that the conditions given in the Borel-Smith conditions hold for these reduced chain complexes over $R\G _{K/N}$.

\section{ Inflation and deflation of chain complexes}\label{sec:seven}
In this section we define the two general operations on chain complexes mentioned in the last section, which are used in the proof of Theorem \ref{thm:IntroBorel-Smith}.  For a finite $G$-CW complex $X$ which is a mod-$p$ homology sphere, the Borel-Smith conditions can be
proved using a reduction argument to certain $p$-group subquotients (compare \cite[III.4]{Dieck:1987}). 
For a subquotient $K/L$, the reduction comes from considering the
fixed point space $X^L$ as a $K$-space. To do a similar reduction for chain complexes over $\RG _G$, we first introduce a new functor for $\RG _G$-modules, called the \emph{deflation} functor. We will introduce this functor as a restriction functor between corresponding module categories. For this discussion $R$ can be taken as any commutative ring with $1$, and $\cF_G$ is any family of subgroups subject to the extra conditions we assume during the construction.

Let $N$ be a normal subgroup of $G$. We define a functor $$F \colon
\G _{G/N} \to \G _G $$ by considering a $G/N$-set (or $G/N$-map) as
a $G$-set (or $G$-map) via composition with the quotient map $G \to G/N$. For this definition to make sense, the families $\cF
_{G/N}$ and $\cF _G$ should satisfy the property that if $K\geq N$ is such that $(K/N) \in
\cF _{G/N} $, then $K \in \cF _G$. Since we always
assume the families are nonempty, the above assumption also implies
that $N \in \cF_G$. For notational simplicity from now on, let us denote $K/N$ by $\overline K$ for every $K \geq N$.

If a family $\cF_G$ is already given, we will always take $\cF
_{G/N} = \{ \overline  K \vv  K \geq N \text{ and } K \in \cF_G \}$ and the condition
above will be automatically satisfied. We also assume that $N \in \cF _G$ to have a nonempty family for
$\cF _{G/N}$.

The functor $F$ gives rise to two functors  (see \cite[9.15]{Luck:1989}): 
$$\res _F \colon  \Mod\text{-}\RG_{G} \to \Mod\text{-}\RG_{G/N}$$ and
$$\ind _F \colon  \Mod\text{-}\RG_{G/N} \to \Mod\text{-}\RG_{G}\ .$$
The first functor $\res _F$ takes a $\RG _G$-module $M$ to the $\RG
_{G/N}$-module $$\Def ^G _{G/N} (M):= M \circ F \colon  \G _{G/N} \to
\RMod.$$ We call this functor the {\em deflation functor}. Note that
$$(\Def ^G _{G/N} M ) (\overline K)=M(K).$$
The induction functor $\ind _F$ associated to $F$
is called the {\em inflation functor}, denoted by $\Inf ^G _{G/N} (M)$. For every $H \in \cF_G$, we have $$\Inf ^G _{G/N} (M) (H)= \Bigl 
(\bigoplus\nolimits _{\overline{K} \in \cF _{G/N}} M(\overline{K})\otimes _{R W_{\overline G} (\overline{K})} R\Map_G (G/H, G/K) \Bigr ) / \sim$$
where the relations come from the tensor product over $R\G_{G/N}$ (see \cite[Definition 9.12]{Luck:1989}). In general, it can be difficult to calculate $\Inf ^G _{G/N} M$ for an arbitrary $R\G _{G/N}$-module $M$. In the case where $M$ is a free $R\G_{G/N}$-module we have the following lemma.

\begin{lemma}\label{lem:inflation} Let $X$ be a finite $G/N$-set. Then,
we have $$\Inf ^G _{G/N} \uR{X}= \uRb{(\Inf ^G _{G/N} X)}.$$ 
\end{lemma}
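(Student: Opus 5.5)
The plan is to reduce to a single orbit and then use the Yoneda lemma on both sides. Both sides of the claimed isomorphism are additive in $X$. On the left, $\uR{X}$ sends disjoint unions to direct sums, and $\Inf^G_{G/N}=\ind_F$ is a left adjoint, so it commutes with direct sums. On the right, $\Inf^G_{G/N} X$ is just $X$ regarded as a $G$-set through $G\to G/N$, which also respects disjoint unions. It therefore suffices to treat a single orbit $X=\overline G/\overline K = (G/N)/(K/N)$ with $\overline K\in\cF_{G/N}$. As a $G$-set this orbit is $G/K$, and $K\in\cF_G$ by the standing assumption on the families.

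For one orbit I would argue through adjunction rather than the explicit tensor-product formula. By \cite[9.15]{Luck:1989}, $\ind_F$ is left adjoint to $\res_F=\Def^G_{G/N}$. Let $M$ be any $\RG_G$-module. Combining this adjunction with the Yoneda isomorphism from Section \ref{sec:four} gives natural isomorphisms
\[
\Hom_{\RG_G}\bigl(\Inf^G_{G/N}\uR{\overline G/\overline K},M\bigr)\cong \Hom_{\RG_{G/N}}\bigl(\uR{\overline G/\overline K},\Def^G_{G/N}M\bigr)\cong (\Def^G_{G/N}M)(\overline K)=M(K).
\]
The Yoneda lemma over $\RG_G$ also gives $\Hom_{\RG_G}(R[G/K^{\,?}],M)\cong M(K)$, naturally in $M$. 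Both modules therefore represent the same functor $M\mapsto M(K)$, and the Yoneda lemma yields $\Inf^G_{G/N}\uR{\overline G/\overline K}\cong R[G/K^{\,?}]$. I would also record the map explicitly: the universal element $\mathrm{id}\in \Map_G(G/K,G/K)$ corresponds to $\overline{eK}\otimes \mathrm{id}$ in the tensor-product description of $\Inf$. This makes the identification concrete and natural in $G/N$-maps between orbits, so the additivity step glues the single-orbit isomorphisms together correctly.

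The main obstacle I expect is bookkeeping rather than any conceptual difficulty. One point is checking that the representable functor used on the $G/N$ side is really $R\Map_{G/N}(\,?\,,\overline G/\overline K)$, which agrees with $\uR{\overline G/\overline K}$ because $(\overline G/\overline K)^{\overline L}\cong \Map_{G/N}(\overline G/\overline L,\overline G/\overline K)$. Another is being consistent with the right-module (contravariant) convention of \cite{Luck:1989}, so that the adjunction goes in the stated direction. If a direct check is preferred, I would evaluate the defining coend at each $H\in\cF_G$ and collapse it with the co-Yoneda lemma to $R\Map_G(G/H,G/K)=R[(G/K)^H]$.
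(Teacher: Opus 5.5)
Your proof is correct. It differs from the paper's only in how the single-orbit case is handled.

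The paper writes $\uRb{(\oG/\oK)}$ as $E_{\oK}(R[W_{\oG}(\oK)])=\Ind_j R[W_{\oG}(\oK)]$, where $j$ is the inclusion of the Weyl group. It then uses transitivity of induction, $\Ind_F\Ind_j=\Ind_{F\circ j}$. Finally it identifies $F\circ j$ with the inclusion $W_G(K)\to\G_G$ via $W_{\oG}(\oK)\cong W_G(K)$, which gives $E_K(R[W_G(K)])=\uRb{G/K}$.

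You instead combine the adjunction $\Inf^G_{G/N}\dashv\Def^G_{G/N}$ with the Yoneda lemma in both orbit categories, and you need only $(\Def^G_{G/N}M)(\oK)=M(K)$. This is the general fact that induction along $F$ sends the representable module at $x$ to the representable module at $F(x)$.

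Your version is more elementary: it needs no Weyl-group identification and no splitting functors, and it gives the explicit generator $\overline{eK}\otimes\id$. The paper's computation, on the other hand, works verbatim for $E_{\oK}P$ with any $R[W_{\oG}(\oK)]$-module $P$. Combined with the decomposition theorem for projectives, it therefore describes $\Inf^G_{G/N}$ on all projective modules at once. The naturality in maps between orbits that you mention is not needed, since a direct sum of isomorphisms is already an isomorphism.

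You should state one thing as a hypothesis rather than slide it in with ``it therefore suffices''. You only treat orbits with $\oK\in\cF_{G/N}$, so you are assuming $X$ has isotropy in $\cF_{G/N}$, i.e.\ that $\uR{X}$ is free. This is what the sentence preceding the lemma intends, and the paper's proof uses it tacitly when it writes $E_{\oK}$. Without it the formula can fail. Take $G=\la a\ra\times\la b\ra\cong\cy{p}\times\cy{p}$, $N=\la a\ra$, $\cF_G=\{1,\la a\ra,\la b\ra\}$, and $X$ a point. The left side vanishes at $\la b\ra$ because $(G/N)^{\la b\ra}=\emptyset$, whereas the right side is $R$ there.
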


\begin{proof} It is enough to show this when $X=\oG/\oK$ for some $K\leq G$ such that $K \geq N$.  In this case, $\uRb{(\oG/\oK)}$ is isomorphic to $E_{\oK}P_{\oK}$ where $P_{\oK}= R[W_{\oG}(\oK)]$,  where $E_{\oK}(-)$ is defined as the \emph{induction} operation $\Ind _{j} (-)$ for the induction functor $j\colon  R[W_{\oG} (\oK)]\to R\G_{G/N}$ (see \cite[9.30]{Luck:1989}). We have 
$$ \Inf ^G _{G/N} \uRb{(\oG/\oK)}=\Inf ^G _{G/N} E_{\oK} P_{\oK}= \Ind _{F} \Ind _{j} P_{\oK}=\Ind _{F\circ j} P_{\oK} $$
where $F\colon  \G_{G/N} \to \G _G $ is the functor defined above. Since $W_{\oG}(\oK)\cong W_G (K)$, after suitable identification, the composition $F\circ j$ becomes the same as the inclusion functor $i\colon W_G (K) \to \G_G$, so we have $$\ind _{F \circ j}P_{\oK}=E_{K} RW_G (K) =\uRb{G/K}$$ as desired.
\end{proof}

By general properties of restriction and induction
functors associated to a functor $F$, the functor $\Def ^G _{G/N}$
is exact and $\Inf ^G _{G/N}$ respects projectives  (see \cite[9.24]{Luck:1989}). The deflation functor has the following formula for free modules.

\begin{lemma}\label{lem:deflation} Let $X$ be a $G$-set. Then,
we have $$\Def ^G _{G/N} \uR{X}= \uRb{(X^N)}$$
 In particular, if  $H
\in \cF_G$ implies $HN \in \cF_G$, then the functor $\Def ^G _{G/N}$
respects projectives.
\end{lemma}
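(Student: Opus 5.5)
The formula is a direct evaluation, and the projectivity claim then follows from it. For every $\oK \in \cF_{G/N}$, with $K \geq N$ and $K \in \cF_G$, the definition of deflation gives
$$(\Def^G_{G/N}\uR{X})(\oK) = \uR{X}(K) = R[X^K].$$
Since $N \leq K$, we have $X^K = (X^N)^K$. The $G$-action on $X^N$ factors through $G/N$, so the $K$-fixed points of $X^N$ coincide with the $\oK$-fixed points of the $G/N$-set $X^N$. Hence $R[X^K] = R[(X^N)^{\oK}] = \uRb{(X^N)}(\oK)$. For a $G/N$-map $\bar f\colon \oG/\oH \to \oG/\oK$, the map $F(\bar f)$ is the same map viewed as a $G$-map, and the induced maps on fixed-point sets are given by the same formula on both sides. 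So the identification is natural in $\oK$, and we obtain the isomorphism of $\RG_{G/N}$-modules.

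For the projectivity statement, it suffices to treat free modules, because $\Def^G_{G/N}$ is additive and preserves direct summands, and every projective module is a summand of a free one. Take a free module $\uRb{G/H}$ with $H \in \cF_G$; this is legitimate since $\uR{X}$ for $X = G/H$ is exactly $\uRb{G/H}$. By the formula just proved, its deflation is $\uRb{((G/H)^N)}$. Decompose the $G/N$-set $(G/H)^N$ into orbits. A coset $gH$ is fixed by $N$ exactly when $g^{-1}Ng \leq H$. Its stabilizer in $G$ is $gHg^{-1}$, which contains $N$, so the corresponding orbit is $\oG/\overline{gHg^{-1}}$.

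The remaining point, which is the only real one, is that each such orbit gives a free $\RG_{G/N}$-module, which requires $\overline{gHg^{-1}} \in \cF_{G/N}$. By the convention $\cF_{G/N} = \{\oK \vv K \geq N,\ K \in \cF_G\}$, this amounts to $gHg^{-1} \in \cF_G$. That holds because families are closed under conjugation and $H \in \cF_G$. Here the stabilizer already contains $N$, so the extra hypothesis ``$H \in \cF_G$ implies $HN \in \cF_G$'' is not even needed for these orbits.

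The hypothesis matters for orbits in other contexts. It guarantees that modules built from $G$-sets with isotropy in $\cF_G$, such as those arising after the identification $X^N = X^{HN}$-type reductions, land in the family. I would note that under it $\cF_{G/N}$ contains every $\overline{HN}$, so that $\uRb{((G/H)^N)}$ is a finite direct sum of free modules $\uRb{(\oG/\oK)}$ with $\oK \in \cF_{G/N}$. Hence it is free, and deflation preserves projectives.
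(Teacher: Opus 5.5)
Your proof is correct. The formula part is the same direct evaluation $R[X^K]=R[(X^N)^{\oK}]$ as in the paper; you also check naturality in $\oK$, which the paper leaves implicit.

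For the projectivity claim the two routes differ.
\begin{itemize}
\item The paper asserts $(G/H)^N=G/HN$ as $G/N$-sets and uses the hypothesis to get $\overline{HN}\in\cF_{G/N}$.
\item You compute stabilizers instead. Every point of $(G/H)^N$ has stabilizer a conjugate of $H$ containing $N$, so it lies in $\cF_{G/N}$ automatically.
\end{itemize}
Your version is the more accurate one. Since $N$ is normal, $g^{-1}Ng=N$. So $(G/H)^N$ is all of $G/H=\oG/\overline{H}$ when $N\le H$, where indeed $HN=H$, and it is empty otherwise. The identification with $G/HN$ therefore fails when $N\not\le H$. Consequently, with the convention $\cF_{G/N}=\{\oK \vv K\ge N,\ K\in\cF_G\}$, your argument shows that $\Def^G_{G/N}$ preserves free and projective modules with no extra hypothesis.

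Three small points on the write-up:
\begin{itemize}
\item Your final paragraph, about ``$X^N=X^{HN}$-type reductions'', has no real content and can be deleted.
\item Using normality, you can replace the orbit decomposition by the observation that there is exactly one orbit or none.
\item For infinitely generated free modules you need $\Def^G_{G/N}$ to preserve arbitrary direct sums, not just to be additive. This holds because it is computed objectwise.
\end{itemize}
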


\begin{proof} For every $K\in \cF _G$  such that $K\geq N$, we have
\begin{equation*}
\begin{split}
(\Def ^G _{G/N} \uR{X})(\overline K)&=\uR{X} (K)=R[X^K] 
=R[(X^N)^{K/N}]=\uRb{(X^N)} (\overline K). \\
\end{split}
\end{equation*}
Note that $(G/H)^N= G/HN$ as a 
$G/N$-set. If $H \in \cF_G$
implies $HN \in \cF_G$, then by assumption  $\overline{HN} \in \cF_{G/N}$.
Hence $\uRb{{((G/H)^N)}}$ is free as an $R\G_{G/N}$-module and $\Def ^G_{G/N}$ respects
projectives.
\end{proof}

\section{ The first non-linear example}\label{sec;eight}
In joint work \cite{Hambleton:2013} with Semra Pamuk, we established a general framework for studying $G$-CW-complexes via the orbit category. As an application of our methods, we considered the first non-trivial case of the existence problem: does there exist a rank two finite group $G$ which admits  finite $G$-CW-complex $X\simeq S^n$ with rank one isotropy, but no orthogonal linear
representations $V$ such that $S(V)$ has rank one isotropy.

If $G$ is a finite $p$-group of rank 2, then there exist orthogonal linear
representations $V$ so that $S(V)$ has rank 1 isotropy (see
\cite{Dotzel:1981}). If $G$ is not of prime power order,
representation spheres with rank 1 isotropy do not exist in general:
a necessary condition is that $G$ has a $p$-effective character for
each prime $p$ dividing $|G|$ (see \cite[Thm.~47]{Jackson:2007}). In
\cite[Prop.~48]{Jackson:2007} it is claimed that this condition is also
sufficient for an affirmative answer to the $G$-CW question above,
but the discussion on \cite[p.~831]{Jackson:2007} does not provide a
construction for $X$.

The first non-trivial case is the permutation group
$G=S_5$ of order 120, which has rank 2 but no linear action with rank 1
isotropy on any sphere, although it does admit $p$-effective
characters for $p=2,3,5$.

\begin{theorem}[{\cite[Theorem A]{Hambleton:2013}}]\label{thm:hpythma} The permutation group $G=S_5$ admits a finite $G$-CW-complex $X \simeq S^n$, such that $X^H \neq \emptyset$ implies that
$H$ is a rank 1 subgroup of $2$-power order.
\end{theorem}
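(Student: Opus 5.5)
Let $\cF$ be the family of subgroups of $S_5$ that are $2$-groups of rank at most one. Up to conjugacy these are $1$, the cyclic groups of order $2$ generated by a transposition or by a double transposition, and the cyclic groups $\langle(1234)\rangle$ of order $4$. (The Sylow $2$-subgroup is $D_8$, and $S_5$ contains no quaternion subgroup.) The plan is to build a finite free chain complex $\CC$ of $\ZG_G$-modules, with $\Gamma_G=\Or_\cF G$, that is a homology $\un{n}$-sphere. I would choose $\un{n}$ strictly monotone on $\cF$, with large values, so that by the remark after Definition~\ref{def:algrep} it is automatically an algebraic homotopy representation. Then Theorem~\ref{thm:2014thma} gives a tight model $\DD$, and Corollary~\ref{cor:2014corb} realizes it by a finite $G$-CW-complex $X$ with isotropy in $\cF$ and $X\simeq S^n$, since $X^1=X$.

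\textbf{Step 1: a dimension function.} I would start from $p$-effective characters: a real representation $V_2$ of $S_5$ restricted to $D_8$ with all $D_8$-isotropy on $S(V_2)$ cyclic, and representations for $p=3,5$ whose restrictions to Sylow subgroups act freely. From these, together with Dotzel--Hamrick \cite{Dotzel:1981}, I would write down a super class function $\un{n}$ on $\cF$. It should satisfy the Borel--Smith conditions (forced by Theorem~\ref{thm:IntroBorel-Smith}), be strictly monotone after taking suitable multiples, and agree on $2$-subgroups with the dimension function of $S(V_2)$.

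\textbf{Step 2: local models.} For each prime $p$ I would construct a finite projective $\bZ_{(p)}\Gamma_G$-complex $\CC^{(p)}$ that is a $\bZ_{(p)}$-homology $\un{n}$-sphere. For $p=3,5$, a Swan-type periodic resolution over the $p$-Sylow normalizer, induced up, gives a free complex over $\bZ_{(p)}G$ supported at the trivial subgroup; its $\un{n}$-values must be compatible on $\cF$ after localization. For $p=2$, I would take the cellular complex $\uC{S(V_2)}$ over $D_8$ and induce it to $G$, or better, work with $\bZ_{(2)}$-coefficients over the $2$-local orbit category. I would use the fact that modules over $\bZ_{(2)}\Gamma_G$ are detected on $2$-subgroups via a Mackey-type stabilization, possibly after joining several copies to fix the degrees.

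\textbf{Step 3: gluing, the main obstacle.} I would use a pullback/patching argument over $\bZ\Gamma_G$ to assemble the local complexes into a finite projective integral complex with the correct homology at every $H\in\cF$. The difficulty is twofold. First, the $k$-invariants, i.e.\ the extension classes in $\Ext_{\bZ\Gamma_G}$ classifying the complex, must have compatible local components at each prime. Second, the resulting finiteness obstruction in $\tilde K_0(\bZ\Gamma_G)$ must vanish, or be killed by taking joins (tensor powers). For the $k$-invariants, I would first try to show that the relevant $\Ext$ group over $\Gamma_G$ splits as a product of its localizations, so that a class with unit local components exists. For the finiteness obstruction, I would try to show that its components vanish for $D_8$, $\bZ/3$ and $\bZ/5$, so that it lies in a finite group and becomes zero after a join power. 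A free complex then exists by adding elementary complexes.
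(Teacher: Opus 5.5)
\textbf{There is a genuine gap: Step 1 asks for something impossible, and your reduction depends on it.} You need $\un{n}$ strictly monotone on $\cF$ so that conditions (ii)--(iii) of Definition~\ref{def:algrep} hold automatically. No admissible $\un{n}$ is strictly monotone.

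Write $D_8=\langle r,s\rangle$ with $r=(1234)$ and $s=(13)$. Then $D_8/\langle r^2\rangle\cong(\bZ/2)^2$. Its subgroups of order $2$ are $\langle r\rangle/\langle r^2\rangle$ and the images of the Klein four-groups $\langle r^2,s\rangle$ and $\langle r^2,rs\rangle$. These Klein four-groups and $D_8$ lie outside $\cF$, so $\un{n}=-1$ on them. The Borel--Smith condition (Theorem~\ref{thm:IntroBorel-Smith}, applied to $\CC\otimes\bbF_2$) then forces $\un{n}(\langle r^2\rangle)=\un{n}(\langle r\rangle)$.

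Concretely, a $D_8$-representation with cyclic isotropy has the form $a\chi+bW$. Here $\chi$ is the linear character with kernel $\langle r\rangle$ and $W$ is the faithful $2$-dimensional representation. So $\dim V^{\langle r^2\rangle}=\dim V^{\langle r\rangle}=a$, and no multiple separates them. Your two requirements, ``agree with $S(V_2)$'' and ``strictly monotone'', are therefore incompatible.

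Consequently you must verify condition (ii) directly for the pair $\langle(13)(24)\rangle<\langle(1234)\rangle$. That is, the restriction $\CC(\langle r\rangle)\to\CC(\langle r^2\rangle)$ must be an \emph{integral} homology isomorphism in degree $\un{n}(\langle r\rangle)$. Mod $2$, Smith theory controls this map. In the $\bZ_{(3)}$- and $\bZ_{(5)}$-local pieces, where $2$ is a unit, there is no Smith-theoretic reason for its degree to be prime to $p$, so it has to be arranged in the construction. Without it, Theorem~\ref{thm:2014thma} rules out a tight model, and Corollary~\ref{cor:2014corb}, which your realization step relies on, does not apply. Condition (iii) is harmless here, since $(13)(24)$ lies in a unique cyclic subgroup of order $4$. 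The paper instead prescribes a common, merely monotone, $\un{n}$ and builds each local complex $\CC^{(p)}$ as an algebraic homotopy representation with that dimension function. This compatibility is thus built in before gluing.

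\textbf{The local models in Step 2 also do not work as described.}

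For $p=2$: if $V_2$ is meant as a representation of $S_5$, it does not exist. A short character computation shows that no representation of $S_5$ restricts to $D_8$ without fixed vectors for both Klein four-groups; this is exactly why $S_5$ has no linear model. What you need is a fusion-respecting $D_8$-representation, namely the restriction of the rotation representation of $S_4$ on $\bbR^3$. Inducing the cellular complex of $S(V_2)$ from $D_8$ to $G$ does not give a homology sphere: at the trivial subgroup its homology is $\bZ_{(2)}[G/D_8]$ in both degrees. The appeal to a ``Mackey-type stabilization'' is not a construction. The missing idea is that $S_4$ controls $2$-fusion in $S_5$. The paper takes the projective complex of a regular $S_4$-triangulation of the cube or octahedron and lifts it to $\bZ_{(2)}\G_G$ using the $\Ext$-restriction isomorphism of Theorem~\ref{thm:hpythmc}.

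For $p=3,5$: a complex supported only at the trivial subgroup is not an $\un{n}$-sphere on $\cF$. You must attach complexes for each nontrivial $K\in\cF$, and this is precisely where condition (ii) has to be arranged. The periodic complex should also be Swan's complex over $\bZ_{(p)}G$ itself, because inducing from $N_G(G_p)$ gives homology $\bZ_{(p)}[G/N_G(G_p)]$.

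Finally, killing the finiteness obstruction by join powers is plausible but unproved. The paper instead varies the finiteness obstruction within the construction.
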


\begin{remark}  
It is an interesting problem for future work to decide if the group
$G=S_5$ can act \emph{smoothly} on  $S^n$ with rank 1 isotropy.
\end{remark}

In order to prove this result we developed further techniques for chain complexes over the
orbit category, which may have some independent interest. A
well-known theorem of Rim \cite{Rim:1959} shows that a module $M$ over
the group ring $\bZ G$ is projective if and only if its restriction
$\res^G_PM$ to any $p$-Sylow subgroup is projective. For modules over the
orbit category we have a similar statement localized at $p$.
 
\begin{theorem}[{\cite[Theorem 3.9]{Hambleton:2013}}] 
Let $G$ be a finite group and let $R=\bZp$. Then an
$\RG_G$-module $M$ has a finite projective resolution with respect to
a family of $p$-subgroups if and only if its restriction $\res^G_PM$
has a finite projective resolution over any $p$-Sylow subgroup $P\leq G$.
\end{theorem}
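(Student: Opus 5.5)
The plan is to prove the two directions separately; the nontrivial direction is ``if'', and I would handle it by induction over the family. The ``only if'' direction is formal. Restriction $\res^G_P$ from $\RG_G$-modules, with $\cF$ a family of $p$-subgroups, to $R\G_P$-modules with $\cF_P = \{H \in \cF \vv H\leq P\}$ is exact. It sends free modules to free modules: by the double coset formula, $\res^G_P R[G/H^?] \cong R[\res^G_P(G/H)^?]$, and the $P$-orbits of $G/H$ have isotropy of the form $P\cap {}^gH$, which lies in $\cF_P$. So restricting a finite projective resolution of $M$ gives a finite projective resolution of $\res^G_PM$.

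For the converse I would use the equivalent formulation that $M$ has a finite projective resolution if and only if $\Ext^k_{\RG_G}(M,-)$ vanishes for $k$ large. The key tool is a transfer argument. There is an induction functor $\Ind_P^G$, left adjoint to restriction, with $\Ind_P^G R[P/K^?] = R[G/K^?]$. For every $\RG_G$-module $M$ I would construct natural maps
\[
M \to \Ind_P^G \res^G_P M \to M
\]
whose composite is multiplication by $[G:P]$, which is a unit in $R=\bZp$. This makes $M$ a direct summand of $\Ind_P^G\res^G_P M$. Since $\Ind_P^G$ is exact, preserves projectives and preserves finite generation, a finite projective resolution of $\res^G_PM$ induces one of $\Ind_P^G\res^G_PM$. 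Then $\Ext^k(M,-)$ is a summand of $\Ext^k(\Ind\res M,-)$ and vanishes for large $k$, so $M$ has finite projective dimension. A finite-type projective resolution can then be truncated once the syzygy is projective.

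The main obstacle is constructing the transfer map $M\to \Ind_P^G\res^G_P M$ and checking that it is natural over the orbit category. Evaluated at $K\in\cF$, the module $\Ind_P^G\res^G_P M$ is a sum over double cosets, and it is not immediately a twisted group-ring induction. What I would try first is to verify the splitting on free modules. There $\Ind_P^G\res^G_P R[G/H^?] = R[(G\times_P G/H)^?]$, and the maps are induced by the $G$-maps $G/H\to G\times_PG/H$, $x\mapsto \sum_{gP} g\otimes g^{-1}x$ (a norm-type sum, linear rather than set-theoretic), and the projection $G\times_P G/H\to G/H$. On fixed points of $K$ these are compatible with all orbit-category morphisms, and the composite is $[G:P]$.

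To pass from free modules to arbitrary $M$, I would write $M$ as a cokernel of a map of free modules and use naturality and right exactness. This works provided the transfer is natural with respect to all module maps between free modules and not just permutation maps. If that naturality fails, my fallback is an induction on the (finite) poset of subgroups in $\cF$. I would filter $M$ by support. Over each conjugacy class $(H)$, the relevant question is about $RW_G(H)$-modules, and there Rim's theorem (or its $\bZp$-version for $p$-Sylow subgroups of $W_G(H)$, compared with $W_P(H)$ for suitable conjugates) gives projectivity of the successive pieces. The finite-resolution property is then assembled via the standard modules $E_HP_H$ of tom Dieck's decomposition used in Lemma~\ref{lem:monotone}.
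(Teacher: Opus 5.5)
Your ``only if'' direction is fine. The ``if'' direction, however, rests on a transfer splitting that does not exist over the orbit category. Already on free modules your formula fails. Under $G\times_P G/H\cong G/P\times G/H$ it sends $x$ to $\sum_{y\in G/P}(y,x)$. For $x\in(G/H)^K$ this is not an element of $R[(G/P\times G/H)^K]$, because most $y$ are not $K$-fixed. More seriously, no natural map $M\to\Ind_P^G\res^G_PM$ whose composite with the counit is a unit multiple of the identity can exist. Such a map would make $M$ projective whenever $\res^G_PM$ is, i.e.\ it would prove Rim's theorem for projectives, and that theorem is false here. Take $\cF$ to be all $p$-subgroups, $M=\underline{R}$, and $P$ not normal (e.g.\ $G=S_3$, $p=2$). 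Then $\res^G_P\underline{R}=R[P/P^?]$ is free. On the other hand, write $S_HM=M(H)/\sum_{K>H}\mathrm{im}\,M(K)$; then $S_H\underline{R}=0$ for non-Sylow $H$ and $S_P\underline{R}=R$. So by tom Dieck's decomposition a projective $\underline{R}$ would be $E_PR$, whose value at the trivial subgroup is $R[G/N_G(P)]\neq R$. Concretely, a map $\underline{R}\to R[G/P^?]$ is given at $G/1$ by a $G$-invariant element of $R[G/P]$ that is also supported on $(G/P)^P=N_G(P)/P$, hence is $0$. So your composite is $0$ there, not $[G:P]$. For each free $F$ a splitting of $\Ind_P^G\res^G_PF\to F$ does exist (since $F$ is projective), but it cannot be chosen naturally, so your extension by right exactness breaks down. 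This is exactly why, over the orbit category, projectivity and having a finite projective resolution are different properties (cf.\ the remark after the statement).

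The fallback does not close the gap either. The subquotients of the support filtration of a module of finite projective dimension need not have finite projective dimension. For $\underline{R}$ as above, the piece at the trivial subgroup is $E_1R$, the trivial $RG$-module placed at $G/1$. Then $\Ext^n_{\RG_G}(E_1R,E_1\Fp)\cong H^n(G;\Fp)$, which is nonzero for infinitely many $n$, and the same happens over $P$. Yet $\underline{R}$ itself has projective dimension one when $G=S_3$, $p=2$. So there is nothing for Rim's theorem to act on stratum by stratum. What is missing is a local invariant that detects finite projective dimension and can be compared between $G$ and $P$. For instance, let $\PP\to M$ be a projective resolution. Then $M$ has a finite projective resolution iff every complex $S_H\PP$ of projective $RW_G(H)$-modules is perfect. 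This holds because $\Ext^*_{\RG_G}(M,I_HN)$ is computed by $\Hom_{RW_G(H)}(S_H\PP,N)$, where $I_HN$ is the atomic module concentrated on $(H)$, and such modules finitely filter every module. Perfection over $\bZp W_G(H)$ is detected on a Sylow subgroup by the group-ring transfer, which does work. For $H\le P$ with $N_P(H)$ Sylow in $N_G(H)$, that Sylow subgroup is $W_P(H)$. But $S_H$ does not commute with restriction: $S^P_H\res^G_P\PP\to\res S^G_H\PP$ is surjective, with kernel assembled from the complexes $S^G_L\PP$ for $L$ properly containing $H$. A downward induction over the strata is therefore needed to control it. Your proposal contains none of these steps, so the ``if'' direction is not established.
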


\begin{remark} For modules over the group ring $RG$, those having
finite projective resolutions are already projective. Over the orbit
category, these two properties are distinct.
\end{remark}

The construction of the $G$-CW-complex $X$ for $G=S_5$ and the proof of Theorem \ref{thm:hpythma} is carried
out in several steps (see \cite[Section 9]{Hambleton:2013}).  We first construct finite projective chain complexes
$\CC^{(p)}$ over the orbit categories $\RG_G$, with $R = \bZp$,  separately for the prime $p = 2, 3, 5$ dividing $|G|$. In each case, the isotropy family  $\cF$ consists of
the rank 1 subgroups of $2$-power order in $G$. 

The chain complexes $\CC ^{(p)}$ all have the same \emph{dimension function}.  We prescribe a non-negative function
$\un{n}\colon \cF \to \bZ$, with the property that $\un{n}(K) \leq \un{n}(H)$ whenever $H$ is conjugate to a subgroup of $K$. Then, by construction, 
 each complex $\CC ^{(p)}$ has the $R$-homology of an $\un{n}$-sphere: 
for each $K\in \cF$, the
complexes $\CC^{(p)}(K)$ have homology  $H_i =R$ only in two
dimensions $i=0$ and $i= \un{n}(K)$. In other words, the complexes $\CC ^{(p)}$ are algebraic versions of tom Dieck's \emph{homotopy representations}.

In the case $p=2$, we start with the group $H = S_4$ acting by orthogonal rotations on
 the $2$-sphere. A regular $H$-equivariant triangulation of an inscribed cube or octahedron gives a finite projective chain complex over $\RG _H$. 
 Then we use a chain complex version of Theorem \ref{thm:hpythmc}, to lift
it to a finite projective complex over $\RG _G$. 
For $p=3$ and
$p=5$, the $p$-rank of $S_5$ is $1$, and there exists a periodic
complex over the group ring $RG$ (see Swan \cite[Theorem B]{Swan:1960}). 
We start with a periodic complex
over $RG$ and add chain complexes to this complex, for every
nontrivial subgroup $K \in \cF$, to obtain the required complex $\CC^{(p)}$ over $\RG_G$. 

 We use the theory of algebraic Postnikov
sections by Dold \cite{Dold:1960} to glue the complexes together to form
a finite projective $\ZG_G$-chain complex. We complete the chain
complex construction by varying the finiteness obstruction  to obtain
a complex of free $\ZG_G$-modules, and then we prove a realization
theorem Corollary \ref{cor:2014corb} to construct the
required $G$-CW-complex $X \simeq S^n$.

\section{ Mackey structures over the orbit category}
\label{sect:nine}

Another useful feature of homological algebra over group rings is
the detection of group cohomology by restriction to the $p$-Sylow
subgroups. Here is an important concept in group cohomology (see for
example \cite{Symonds:2004}).

\begin{definition} For a given prime $p$, we say that a subgroup
$H \subseteq G$ \emph{controls $p$-fusion} provided that
\begin{enumerate}
\item $p \nmid |G/H|$, and
\item whenever $Q \subseteq H$ is a $p$-subgroup, and there exists $g
\in G$
such that $Q^g:=g^{-1}Q g \subseteq H$, then $g = ch$ where $c \in
C_G(Q)$ and $h \in H$.
\end{enumerate}
\end{definition}

One reason for the importance of this definition is the fact that
the restriction map $$H^*(G; \Fp) \to H^*(H;\Fp)$$ is an isomorphism
if and only if $H$ controls $p$-fusion in $G$ (see \cite{Mislin:1990a},
\cite{Symonds:2004}). 
We have the following generalization of this result based on the theory of
 Mackey functors of cohomological type over the
orbit category (with respect to any family $\cF$).

\begin{theorem}[{\cite[Theorem C]{Hambleton:2013}}] \label{thm:hpythmc}
Let $G$ be a finite group, $R=\bZp$, and $H\leq G$ a
subgroup
which controls $p$-fusion in $G$. If $M$ is an $\RG_G$-module and
$N$ is a cohomological Mackey functor, then the restriction map
$$\res^G_H\colon \Ext^n_{\RG_G}(M,N) \to  \Ext^n_{\RG_H}(\res^G_HM,
\res^G_HN)$$
is an isomorphism for $n\geqslant 0$, provided that the centralizer
$C_G(Q)$ of any $p$-subgroup $Q \leq H$, with $Q \in \cF$, acts trivially on $M(Q)$ and
$N(Q)$.
\end{theorem}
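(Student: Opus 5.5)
Since $\res^G_H$ is exact and carries free modules $R[G/K^?]$ to free modules (the $H$-set $\res^G_H(G/K)$ is a disjoint union of orbits $H/(H\cap K^g)$), it sends projective resolutions to projective resolutions. So $\res^G_H$ induces a natural map of $\delta$-functors $\Ext^*_{\RG_G}(-,N)\to\Ext^*_{\RG_H}(\res^G_H -,\res^G_H N)$, and the plan is to show it is an isomorphism.

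The first step is to construct a transfer in the other direction, using the Mackey structure of $N$. Induction $\Ind_H^G$ over the orbit categories is left adjoint to $\res^G_H$. The unit and counit, combined with the cohomological Mackey functor maps $\mathrm{tr}\colon N(H\cap K)\to N(K)$, define
$$\mathrm{tr}^G_H\colon \Ext^n_{\RG_H}(\res^G_HM,\res^G_HN)\to \Ext^n_{\RG_G}(M,N).$$
The composite $\mathrm{tr}^G_H\circ\res^G_H$ is multiplication by the index $|G:H|$. This uses the cohomological condition $\mathrm{tr}\circ\res=|K:K'|$, and the index is a unit in $\bZp$ because $p\nmid|G:H|$. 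Hence $\res^G_H$ is injective, and its image is the set of ``$G$-stable'' elements.

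The second step is to compute the other composite $\res^G_H\circ\mathrm{tr}^G_H$ by a Mackey double coset formula. This gives a sum over double cosets $HgH$ of terms $\mathrm{tr}^H_{H\cap {}^gH}\circ c_g\circ \res^H_{H^g\cap H}$. To show the image of $\res^G_H$ is everything, it suffices to show that every class in the target is stable in this sense. We reduce to the case where $M$ is a free module $R[G/Q^?]$, or work on the level of Ext with $N$ evaluated at $p$-subgroups. Restricted to a $p$-subgroup $Q\le H\cap{}^gH$, fusion control lets us write $g=ch$ with $c\in C_G(Q)$. Then conjugation by $g$ agrees with conjugation by $h\in H$ composed with $c$, and $c$ acts trivially on $M(Q)$ and $N(Q)$ by hypothesis. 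So the double coset terms reduce to index multiples, and $\res\circ\mathrm{tr}$ is also multiplication by a unit on the relevant part.

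To make this rigorous I would localize at $p$. Over $\bZp$, cohomological Mackey functors are detected on $p$-subgroups via transfer-restriction, so only $p$-subgroups $Q\in\cF$ contribute. I would then filter $M$ by the subgroups in its support, or use the decomposition theorem to reduce to modules $E_QP_Q$. After that reduction the statement is the classical Cartan–Eilenberg stable elements argument applied to the Weyl groups $W_G(Q)$ versus $W_H(Q)$, with fusion control passing to the normalizers. A dimension-shifting/five-lemma argument on short exact sequences then extends the isomorphism from degree $0$ to all $n$.

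The main obstacle is this second step. One must show that the stable-element condition, controlled by fusion of $p$-subgroups, really applies in the orbit category setting, where non-$p$-subgroups in $\cF$ and the trivial-action hypotheses on $M(Q)$ and $N(Q)$ have to be handled carefully. I would first try the reduction to projective $M$ together with an induction over the poset of conjugacy classes of $p$-subgroups.
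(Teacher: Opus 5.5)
Your Steps 1--2 follow the paper's strategy: show that $K\mapsto\Ext^*_{\RG_K}(\res^G_K M,\res^G_K N)$ is a cohomological, conjugation-invariant Mackey functor, then run the standard transfer/stable-element argument. Given those properties, your double coset step even works term by term. For a Sylow subgroup $S$ of $H\cap{}^gH$, fusion control gives $g=ch$ with $c\in C_G(S)$ and $h\in H$. Then $c_g\res^H_{H^g\cap H}x$ and $\res^H_{H\cap{}^gH}x$ have the same restriction to $S$. They are therefore equal, because restriction to a Sylow subgroup is injective for a cohomological Mackey functor over $\bZp$, and so $\res^G_H\circ\mathrm{tr}^G_H=\sum_{HgH}|H:H\cap{}^gH|=|G:H|$.

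The gap is that you assume these properties rather than establish them, and the reduction you propose for making Step 2 rigorous would fail. Reducing to a free module $M=\uR{G/Q}$ or to $E_QP_Q$ and then dimension-shifting with the five lemma needs the statement for projective modules and syzygies. The centralizer hypothesis is not inherited by them, and without it the theorem is false. For $M=\uR{G/1}$ and $N=\un{R}$ (restrictions the identity, transfers multiplication by the index), the degree-zero restriction map is $R=N(1)\to N(1)^{|G:H|}$, which is not onto when $H\neq G$.

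Two things are needed instead. First, a Mackey structure on the Ext groups of the fixed pair $(M,N)$, in all degrees at once. The paper builds it as follows:
\begin{enumerate}
\item It starts from $S\mapsto\Hom_{\RG_G}(M\otimes_R\uR{S},N)$ on finite $G$-sets, with transfers defined via pullbacks and the transfers of $N$.
\item It passes to Ext because $\PP\otimes_R\uR{S}$ is a projective resolution of $M\otimes_R\uR{S}$ for a single resolution $\PP\to M$.
\item It identifies the value at $G/K$ with $\Ext_{\RG_K}$ by Eckmann--Shapiro.
\end{enumerate}
This structure is defined on all subgroups, not only those in $\cF$, and needs no dimension shifting.

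Second, conjugation invariance at the level of Ext. It is not enough that $c\in C_G(Q)$ acts trivially on $M(Q)$ and $N(Q)$; you need $c$ to act trivially on $\Ext_{\RG_Q}(\res^G_Q M,\res^G_Q N)$. This holds because $c$ centralizes every $K\le Q$, so conjugation by $c$ is the identity functor on $\G_Q$. The induced automorphism of Ext then comes from the action of $c$ on $M(K)$ and $N(K)$ for the $p$-subgroups $K\le Q$ in $\cF$, which is trivial by the hypothesis applied to each such $K$.

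With these two ingredients your argument goes through degree by degree. No reduction over the poset of $p$-subgroups or to Weyl groups is needed.
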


The proof of Theorem \ref{thm:hpythma} uses some structural and computational facts about the $\Ext$-groups over the orbit category. These $\Ext$-groups are the domain of the $k$-invariants in the theory of algebraic Postnikov
sections \cite{Dold:1960}. The exposition is based on \cite[\S 4]{Hambleton:2013}, following Cartan-Eilenberg \cite{Cartan:1999} and  tom Dieck \cite[\S II.9]{Dieck:1987}.
%(see also \cite{jackowski-mcclure-oliver2}, \cite{grodal1}). 

We have seen that the category of right $\RG$-modules has enough
projectives to define the bifunctor
$$\Ext^*_{\RG}(M,N)=H^*(\Hom_{\RG}(\PP,N))$$
 via any projective
resolution $\PP\twoheadrightarrow M$ 
 (see
  \cite[Chap.~III, \S 17]{Luck:1989}, \cite[Chap.~III.6]{Mac-Lane:1995}). 

%The following property
%is also useful (see  L\"uck \cite[17.21]{lueck3}).
%
%\begin{lemma}\label{lem:E_x property}
%If $\G$ is a 
%free EI-category, then
%$\Ext^*_{\RG}(E_xM,N)\cong\Ext^*_{R[x]}(M,\Res_xN)$.
%\end{lemma}
%
%\begin{proof}  
%Take a projective resolution $\PP$ of $M$.  Since $\G$ is free, the extension functor $E_x$ is exact \cite[16.9]{lueck3}. In addition,  $E_x$ preserves projectives and is adjoint to the restriction functor $\Res_x$ by Proposition \ref{prop: induction-restriction}.
%Therefore
%$$\cdots\rightarrow E_x P_n \rightarrow
%\cdots \rightarrow E_xP_1\rightarrow E_xP_0\rightarrow
%E_xM\rightarrow 0$$ 
%is a projective resolution of $E_xM$, and applying
%$\Hom$ over the orbit category gives
%\begin{eqnarray*}
%\Ext^n_{\RG}(E_xM,N)&=& H^n(\Hom_{\RG}(E_x\PP, N))\\
%&\cong& H^n(\Hom_{R[x]}(\PP, \Res _x N))=\Ext^n_{R[x]}(M,\Res_xN). \qedhere\end{eqnarray*}
%\end{proof}

In the rest of  this section, we assume that $\G_G = \OrG$ for a
finite group $G$, where $\cF$ is a family of subgroups in $G$. 
Note that $\G_G$ is both finite and free as an EI-category.
 If there are
two groups $H \leq G$, we use the notations $\G _G = \OrG$  for the orbit category with respect to the family $\cF$,  and $\G
_H =\OrH$  for the orbit category with respect to the family  $\cF_H = \{ H \cap K \vv K \in \cF\}$.

We now recall the definition of a Mackey functor (following
Dress \cite{Dress:1975}). Let $G$ be a finite group and $\cD(G)$ denote
the Dress category of finite $G$-sets and $G$-maps.  A bivariant functor
$$M=(M^*,M_*)\colon \cD(G) \rightarrow \RMod$$
consists of a contravariant functor
$$ M^*\colon\cD(G)\rightarrow \RMod$$ and a covariant
functor
$$M_*\colon \cD(G)\rightarrow \RMod.$$ The functors
are assumed to coincide on objects.  Therefore, we write
$M(S)=M_*(S)=M^*(S)$ for a finite $G$-set $S$.  If $f\colon
S\rightarrow  T$ is a morphism, we often use the notation
$f_*=M_*(f)$ and $f^*=M^*(f)$. If $S = G/H$ and $T=G/K$ with $H\leq
K $ and $f\colon G/H \to G/K$ is given by $f(eH)=eK$, then we use the
notation $f_* = \ind_H^K$ and $f^* = \res_H^K$.

\begin{definition}[{Dress \cite{Dress:1975}}]
A bivariant functor is called a \emph{Mackey functor} if it has the following
properties:
\begin{enumerate}\addtolength{\itemsep}{0.2\baselineskip}
\item [(M1)]For each pullback diagram $$\xymatrix{X\ar[r]^h\ar[d]_g
&Y\ar[d]^k\\S\ar[r]_f&T }$$ of finite $G$-sets, we have $h_*\circ
g^*=k^*\circ f_*$.
\item [(M2)] The two embeddings $S\rightarrow  S\disjointunion
T\longleftarrow  T$ into the disjoint union define an isomorphism
$M^*(S\disjointunion T)\cong M^*(S) \oplus M^*(T)$.
\end{enumerate}
\end{definition}

\begin{remark} There is a functor $\cO(G) \to \cD(G)$ defined on
objects by $H \mapsto G/H$ for every subgroup $H \leq G$, and as the
identity on morphism sets. By composition, any contravariant functor $\cD(G) \to \RMod$ gives a
right $\RG_G$-module,  with respect to any given family of subgroups $\cF$ of $G$.

In the  statement of Theorem \ref{thm:Mackey structure}  we will use the examples
$\uR{S}\colon \cD(G) \to \RMod$, defined by $ \uR{S}(G/K) = R\Map _G(G/K ,S)$
 for every $K \in \cF$, where
$\Map _G(G/K ,S)$ denotes the set of $G$-maps from $G/K$ to $S$.
\end{remark}

The proof of Theorem \ref{thm:hpythmc} follows by showing that $H\mapsto
\Ext^*_{\RG_H}(M,N)$ has a cohomological Mackey functor structure
which is conjugation-invariant.  
First
we describe the Mackey functor structure on $\Hom_{\RG_{?}}(M,N)$.

\begin{theorem}[{\cite[Theorem 4.11]{Hambleton:2013}}]
\label{thm:Mackey structure} For a right $\RG_G$-module $M$ and a
Mackey functor $N$, let $$\uHom(M,N)\colon \cD(G) \to \RMod$$ denote
the function defined by $S\mapsto \Hom_{\RG_G}(M\otimes_R\uR{S},N)$
for any finite $G$-set $S$. Then $\Hom_{\RG_?}(M,N)$ inherits a
Mackey functor structure.
\end{theorem}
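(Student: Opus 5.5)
The plan is to define the restriction and induction maps on $S\mapsto \Hom_{\RG_G}(M\otimes_R\uR{S},N)$ from the Mackey structure of $N$ and from $\uR{S}$, and then check (M1) and (M2).

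\emph{Contravariant part.} A $G$-map $f\colon S\to T$ induces a map of $\RG_G$-modules $f_\sharp\colon \uR{S}\to\uR{T}$ by composition $G/K\to S\to T$. Tensoring with $M$ gives $\id\otimes f_\sharp\colon M\otimes_R\uR{S}\to M\otimes_R\uR{T}$. Precomposition then gives
\[
f^*\colon \Hom_{\RG_G}(M\otimes_R\uR{T},N)\to\Hom_{\RG_G}(M\otimes_R\uR{S},N).
\]

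\emph{Covariant part.} Here we use the transfer of $N$, working pointwise. An element $\varphi$ of $\Hom_{\RG_G}(M\otimes_R\uR{S},N)$ is a compatible family of maps
\[
\varphi_K\colon M(K)\otimes R\Map_G(G/K,S)\to N(K).
\]
For $K\in\cF$ and a $G$-map $\beta\colon G/K\to T$, form the pullback $P=G/K\times_T S$, which decomposes as $\coprod_i G/L_i$ with $L_i\le K$ up to conjugacy. Let $\alpha_i\colon G/L_i\to S$ be the induced maps and $\pi_i\colon G/L_i\to G/K$ the projections. Define
\[
(f_*\varphi)_K(m\otimes\beta)=\sum_i \pi_{i*}\bigl(\varphi_{L_i}(\pi_i^*m\otimes\alpha_i)\bigr),
\]
where $\pi_i^*m=M(\pi_i)(m)$ and $\pi_{i*}$ is the induction map of $N$. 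Note that $L_i\in\cF$ because $\cF$ is closed under subgroups. Equivalently, $f_*$ is transfer along $f$ composed with the natural identification
\[
\Hom(M\otimes\uR{S},N)\cong\Hom\bigl(M,\ \Hom_R(\uR{S},N)\bigr),
\]
and the pointwise formula is the concrete form of that transfer.

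We must check that $f_*\varphi$ is a morphism of $\RG_G$-modules, i.e.\ natural in $G/K$. Precomposing $\beta$ with $G/K'\to G/K$ changes the pullback. Naturality then follows from applying the Mackey double coset formula (M1) for $N$ to the square formed by $G/K'\to G/K$ and $\pi_i$.

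\emph{Axioms.} (M2) holds because $\uR{S\sqcup T}=\uR{S}\oplus\uR{T}$, since $G/K$ is transitive and so every map $G/K\to S\sqcup T$ lands in one summand. Hom out of a direct sum then splits. Functoriality of $f^*$ is clear. Functoriality of $f_*$ follows from transitivity of induction in $N$ together with pasting of pullbacks.

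For (M1), take a pullback square $X\to Y$, $X\to S$, $S\to T$, $Y\to T$. Evaluate both sides of $h_*g^*=k^*f_*$ on $m\otimes\gamma$ with $\gamma\colon G/K\to Y$. Both sides reduce to sums over the orbits of the iterated pullback
\[
G/K\times_Y X\cong G/K\times_T S,
\]
and the two sums agree term by term.

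\emph{Main obstacle.} The main obstacle is the naturality check for $f_*\varphi$ in the orbit variable. It needs the double coset decomposition of $G/K'\times_{G/K}G/L_i$, matched against the orbit decomposition of $G/K'\times_T S$, and then (M1) for $N$ to commute restriction past induction. I would organize this by interpreting everything on the category of finite $G$-sets over $T$, so that each identity becomes an instance of pullback-pasting. The coordinate-free description via transfer should make this bookkeeping automatic.
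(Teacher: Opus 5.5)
Your proposal is correct and is essentially the paper's proof. Restriction is precomposition with $\id\otimes\tilde f$, (M2) comes from $\uR{S\sqcup T}=\uR{S}\oplus\uR{T}$, and your induction formula is exactly the paper's $\varphi_T(V)(x\otimes\alpha)=F_*\bigl(\varphi_S(U)(F^*(x)\otimes\beta)\bigr)$ written orbit by orbit over the pullback $U$ (the paper uses the possibly non-transitive $U$ directly, extending additively), with naturality coming from (M1) for $N$ as the paper indicates; your sketches of functoriality of $f_*$ and of (M1) via pasting of pullbacks fill in what the paper defers to the original reference, and the only slip is the non-load-bearing aside identifying $\Hom_{\RG_G}(M\otimes_R\uR{S},N)$ with $\Hom(M,\Hom_R(\uR{S},N))$, since the pointwise $\Hom_R$ has mixed variance and is not an $\RG_G$-module (the correct internal Hom is $N(S\times -)$).
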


\begin{proof}
We will first define the induction and restriction maps to see that
$\uHom(M,N)$ is a bifunctor. For $f\colon S\rightarrow  T$ a
$G$-map,  the \emph{restriction} map
$$f^*\colon  \Hom_{\RG_G}(M\otimes_R\uR{T},N)\rightarrow
\Hom_{\RG_G}(M\otimes_R\uR{S},N)$$ is the composition with
$M\otimes_R \uR{S}\xrightarrow{\id\otimes\tilde{f}}M\otimes_R
\uR{T}$ where $\tilde{f}$ denotes  is the linear extension of the
map induced by $f$. Since the functors $\uR{S}$ satisfy axiom (M2),
so does $\uHom(M,N)$.

For $f\colon S\rightarrow  T$ a $G$-map, we  define  the
\emph{induction} map
$$f_*\colon  \Hom_{\RG_G}(M\otimes_R\uR{S},N)\rightarrow
\Hom_{\RG_G}(M\otimes_R\uR{T},N)$$ in the following way: let
$\varphi _S\colon M\otimes_R \uR{S}\rightarrow  N$ be given. We
will describe the homomorphism $\varphi _T=f_*(\varphi _S)$.
$$\varphi _T(V)(x\otimes\alpha)=F_*\Bigl (\varphi _S(U)(F^*(x)\otimes
\beta ) \Bigr )$$ for $x\in M(V)$ and $\alpha\colon V\rightarrow  T$,
where $U$, $\beta $ and $F$ are given by the pull-back
$$\xymatrix{U\ar[r]^\beta \ar[d]_F&S\ar[d]^f\\V\ar[r]_\alpha&T}$$
It is easy to check that this formula for $\varphi _T$ gives an
$\RG_G$-homomorphism, using the assumption that $N$ is a Mackey functor.
For the remaining details, see \cite[Theorem 4.11]{Hambleton:2013}.
\end{proof}

The Mackey structure on $\uHom(M,N)$ extends without difficulty to chain complexes over the orbit category.
\begin{proposition}[{\cite[Proposition 4.13]{Hambleton:2013}}]
Let $\CC$ be a chain complex of right $\RG_G$-modules and $N$ be a
Mackey functor. Then, the cochain complex
$$\bC^*=\uHom(\CC,N)$$ with the differential $\delta\colon
\uHom(C_i,N)\rightarrow \uHom(C_{i+1},N)$ given by
$\delta(\varphi )=\varphi \circ\bd$ is a cochain complex of Mackey
functors.
\end{proposition}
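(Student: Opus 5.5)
The plan is to apply the functor $\uHom(-,N)$ of Theorem \ref{thm:Mackey structure} degreewise to $\CC$ and check two things: that the result is a sequence of Mackey functors, and that the coboundary $\delta(\varphi)=\varphi\circ\bd$ is a morphism of Mackey functors. Once we have that, $\delta\circ\delta=0$ follows at once from $\bd\circ\bd=0$, evaluated at each finite $G$-set $S$.

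First, for each $i$ and each finite $G$-set $S$, the groups $\bC^i(S)=\Hom_{\RG_G}(C_i\otimes_R\uR{S},N)$ form a Mackey functor by Theorem \ref{thm:Mackey structure}. The differential at $S$ is precomposition with $\bd\otimes\id\colon C_{i+1}\otimes_R\uR{S}\to C_i\otimes_R\uR{S}$. Since $\bd\circ\bd=0$, we get $\delta\circ\delta=0$ at each $S$.

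Next, we check that $\delta$ commutes with the restriction maps $f^*$. For $f\colon S\to T$, the map $f^*$ is precomposition with $\id\otimes\tilde f$. Since $(\bd\otimes\id)\circ(\id\otimes\tilde f)=(\id\otimes\tilde f)\circ(\bd\otimes\id)$, naturality is immediate.

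The step I expect to need the most care is compatibility of $\delta$ with the induction maps $f_*$. Take $\varphi_S\colon C_i\otimes\uR{S}\to N$, and write $f_*$ by the pullback formula from the proof of Theorem \ref{thm:Mackey structure}:
$$(f_*\varphi_S)(V)(x\otimes\alpha)=F_*\bigl(\varphi_S(U)(F^*x\otimes\beta)\bigr).$$
Evaluate $\delta f_*\varphi_S$ on $y\otimes\alpha$ with $y\in C_{i+1}(V)$. This gives $F_*\bigl(\varphi_S(U)(F^*(\bd y)\otimes\beta)\bigr)$. On the other side, $f_*(\delta\varphi_S)$ gives $F_*\bigl(\varphi_S(U)(\bd(F^*y)\otimes\beta)\bigr)$. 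These agree because $\bd$ is a map of $\RG_G$-modules, so it commutes with the contravariant structure map $F^*$ on the modules $C_i$, extended additively over the orbits of $U$. The point to verify is that this formula depends only on the $\RG_G$-module structure of $C_i$ and on the Mackey structure of $N$, not on $\bd$. That holds because $N$ is fixed and only the $C_i$ vary.

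Hence each $\delta$ is a morphism of Mackey functors, and $\bC^*$ is a cochain complex of Mackey functors.
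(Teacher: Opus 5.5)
Your proof is correct and follows the same route as the argument in the cited source; the survey itself gives no proof, only the remark that the structure ``extends without difficulty.'' In both, each $\bC^i$ is a Mackey functor by Theorem \ref{thm:Mackey structure}, $\delta$ commutes with restrictions trivially, and $\delta$ commutes with inductions via the pullback formula because $\bd$, being a map of $\RG_G$-modules, commutes with $F^*$ (extended additively over the orbits of $U$). Your closing sentence about the formula ``not depending on $\bd$'' is unnecessary, since the computation just before it already gives $\delta f_* = f_* \delta$.
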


\begin{corollary}
Let $M$ be an $\RG_G$-module and $N$ be a Mackey functor.  Then,
$$\uExt^*(M,N)$$ has a Mackey functor structure.  As a Mackey
functor $\uExt^*(M,N)$ is equal to the homology of the
cochain complex of Mackey functors $\uHom(\PP,N)$ where $\PP$
is a projective resolution of $M$ as an $\RG_G$-module.
\end{corollary}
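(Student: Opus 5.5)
The approach is to combine the preceding Proposition, which makes $\uHom(\CC,N)$ a cochain complex of Mackey functors, with two facts: Mackey functors form an abelian category in which kernels, images and homology are computed objectwise, and the values are independent of the choice of projective resolution.

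\textbf{Step 1.} Fix a projective resolution $\PP\twoheadrightarrow M$ over $\RG_G$. By the preceding Proposition, $S\mapsto \Hom_{\RG_G}(\PP_i\otimes_R\uR{S},N)$ is a Mackey functor for each $i$. The differentials $\delta=\circ\,\bd$ commute with both $f^*$ and $f_*$. For $f^*$ this is immediate, since $\id\otimes\tilde f$ commutes with $\bd\otimes\id$. For $f_*$ it holds because the pull-back formula for $\varphi_T$ is natural in the first variable.

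\textbf{Step 2.} Take the cohomology of this cochain complex. Both conditions (M1) and (M2) are linear identities between natural maps, so they pass to subquotients: kernels and images of Mackey functor maps are again Mackey functors. Hence $S\mapsto H^n(\uHom(\PP,N)(S))$ is a Mackey functor.

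\textbf{Step 3.} Identify its value at $S$ with an $\Ext$-group. The functor $-\otimes_R\uR{S}$ is exact, because the value at $K$ is $M(K)\otimes_R R[\Map_G(G/K,S)]$ and the second factor is a free $R$-module. It also preserves projectives. To see this, decompose $S$ into orbits $G/H$. Then $\uR{G/H}\otimes_R M$ is the induction from $\RG_H$ of the restriction of $M$ to $H$, via the standard identification $\Map_G(G/K,G/H)\cong (G/H)^K$. Induction sends projectives to projectives because it is left adjoint to the exact restriction functor.

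It follows that $\PP\otimes_R\uR{S}$ is a projective resolution of $M\otimes_R\uR{S}$. Therefore
$$H^n(\uHom(\PP,N)(S))\cong\Ext^n_{\RG_G}(M\otimes_R\uR{S},N),$$
which is the value $\uExt^n(M,N)(S)$. On an orbit $S=G/H$, this value becomes $\Ext^n_{\RG_H}(\res^G_H M,\res^G_H N)$ by the adjunction.

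\textbf{Step 4.} Independence of the resolution. Any two projective resolutions are chain homotopy equivalent over $\RG_G$ by a map lifting $\id_M$. Tensoring with $\uR{S}$ and applying $\Hom(-,N)$ is functorial and compatible with $f^*$ and $f_*$ (again by naturality of the pull-back formula). So the chain homotopy equivalence induces an isomorphism of Mackey functors on cohomology. The resulting Mackey structure on $\uExt^*(M,N)$ is therefore well defined, and it equals the cohomology of $\uHom(\PP,N)$.

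The main obstacle is the naturality check in Steps 1 and 4. One has to show that the induction $f_*$ from Theorem~\ref{thm:Mackey structure} commutes with precomposition by an arbitrary $\RG_G$-map $\CC_i\to\CC_j$. I would verify this directly from the formula $F_*(\varphi_S(U)(F^*(x)\otimes\beta))$, using that any module map commutes with $F^*$.
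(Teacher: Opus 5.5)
Your proposal is correct and is essentially the paper's argument. The paper's proof consists only of combining the preceding Proposition with the observation that $S\mapsto \PP\otimes_R\uR{S}$ is a projective resolution of $S\mapsto M\otimes_R\uR{S}$; your Steps 2 and 4 spell out what it leaves implicit. One small omission in Step 3: inducing $\res^G_H\PP_i$ also requires restriction to preserve projectives, though it is quicker to note that $\uR{G/K}\otimes_R\uR{S}\cong\uR{(G/K\times S)}$ is free, since $G/K\times S$ has isotropy in $\cF$.
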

\begin{proof} To compute the $\Ext$-groups, 
note that $S\mapsto \PP\otimes_R \uR{S}$ is a projective resolution
of the module $S\mapsto M \otimes_R \uR{S}$, for every finite
$G$-set $S$. 
\end{proof}
\begin{remark}
It follows that a version of the Eckmann-Shapiro isomorphism
$$\Ext^*_{\RG_G}(M\otimes \uR{G/H},N)\cong \Ext^*_{\RG_H}(\Res_H^G
M,\Res_H^G N)$$ holds for the $\Ext$-groups over the orbit category (compare  \cite[2.8.4]{Benson:1998}).
\end{remark}

\begin{remark} If $N$ is a Green module over a Green ring $\cG$, then
the Mackey functor $\uExt^*(M,N)$ also inherits a Green module
structure over $\cG$. The basic formula is a pairing $$\cG(S) \times
\Hom_{\RG_?}(M\otimes_R\uR{S},N) \to
\Hom_{\RG_?}(M\otimes_R\uR{S},N)$$ induced by the Green module
pairing $\cG \times N \to N$. For any $z\in \cG(S)$, $x\in M(V)$,
and $\alpha \colon V \to S$, we define
$$(z\cdot \varphi_S)(V)(x\otimes \alpha) = \alpha^*(z)\cdot
\varphi_S(V)(x\otimes \alpha)$$
for any $\varphi_S(V)\colon M(V) \otimes_R R\Mor(S, V) \to N(V)$.
The check that this pairing gives a Green module structure is left
to the reader. \qed
\end{remark}

\section{Existence Results for rank two simple groups}\label{sec:ten}
In \cite{Hambleton:2016} we studied the existence problem for rank two finite groups $G$, with respect to the family $\cH$ of all rank one $p$-subgroups of $G$. We showed  that a rank two finite group $G$ which satisfies certain explicit group-theoretic conditions  admits a finite $G$-CW-complex $X\simeq S^n$ with isotropy in $\cH$, whose fixed sets are homotopy spheres. Our construction provided an infinite family of new non-linear $G$-CW-complex examples. 

Apart from the input from P.~A.~Smith theory,  there is another group theoretical \emph{necessary} condition related to fusion properties of the Sylow subgroups. This condition involves the rank two finite group $\Qd(p)$, since any rank two finite group which includes $\Qd(p)$ as a subgroup cannot admit such actions. 

More generally, we say $\Qd(p)$ is $p'$-\emph{involved in $G$} if there exists a subgroup $K \leq G$, of order prime to $p$, such that $N_G(K)/K$ contains a subgroup isomorphic to $\Qd(p)$. It follows that, whenever there exists a finite $G$-CW-complex $X\simeq S^n$ with rank one isotropy, the group $\Qd(p)$ is not $p'$-involved in $G$,  for any odd prime $p$. If $G$ does not $p'$-involve $\Qd(p)$, for $p$ odd, we say that $G$ is called \emph{$\Qd(p)$-free}.
 
\begin{definition}\label{def:pEffective}
Let $G_p$ be a Sylow $p$-subgroup of $G$. A character  $\chi : G_p\to \bbC$  \emph{respects fusion} in $G$ if  
$\chi (gxg^{-1})=\chi(x)$ whenever $gxg^{-1} \in G_p$ for some $g \in G$ and $x \in G_p$. 
A finite group $G$ is said to have a $p$-\emph{effective character} 
if it has a character $\chi\colon G_p \to \bbC$ which respects fusion and  satisfies $\la \chi  |_E, 1_E\ra = 0$ 
for every elementary abelian $p$-subgroup $E\leq G$ with rank equal to the $p$-rank of the group. 
\end{definition}

We recall that in \cite[Theorem 47]{Jackson:2007} it was proved that a rank two group $G$ has a $p$-effective character if and only if $p=2$, or $p$ is odd and $G$ is  $\Qd(p)$-free. We will use these characters to obtain finite $G$-homotopy representations with rank one prime power isotropy, assuming an additional closure property at the prime $p=2$.

Let $\PG$  denote the set of primes $p$ such that $\rk_p(G)=2$. Let $\cH_p$ denote the family of all rank one  $p$-subgroups $H \leq G$, for $p \in \PG$, and  let $\cH = \bigcup\, \{ H \in \cH_p \vv p\in \PG\}$.

\begin{theorem}[{\cite[Theorem A]{Hambleton:2016}}] \label{thm:main2016} Let $G$ be
rank two finite group satisfying  the following two conditions: 
\begin{enumerate}
\item $G$ is    $2$-regular if $2 \in \PG$, and $G$ is $\Qd(p)$-free for all $p\in \PG$ with $p>2$;  
% either $\Omega_1(Z(G_2))$ is strongly closed in $G_2$ with respect to $G$, or $G$ satisfies the rank one intersection property. 
\item  If $1 \neq H \in \cH_p$, then $\rk_q(N_G(H)/H) \leq 1$ for every prime $q\neq p$.  
\end{enumerate}
Then there exists a finite    $G$-homotopy representation $X$ with isotropy in $\cH$. 
\end{theorem}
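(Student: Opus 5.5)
The plan is to follow the strategy of the $S_5$ example (Theorem \ref{thm:hpythma}): build local algebraic models prime by prime, glue them into a free $\ZG_G$-complex, make it tight, and realize it geometrically by Corollary \ref{cor:2014corb}.

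First, for each prime $p$ dividing $|G|$, I will construct a finite projective chain complex $\CC^{(p)}$ over $\RG_G$, with $R=\bZp$ and family $\cH$. Each $\CC^{(p)}$ should be an $R$-homology $\un{n}$-sphere for one common monotone function $\un{n}$ supported on $\cH$.

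The cases split as follows.
\begin{enumerate}
\item For $p\in\PG$ odd, $G$ is $\Qd(p)$-free, so \cite[Theorem 47]{Jackson:2007} gives a $p$-effective character $\chi$ on $G_p$ that respects fusion. I take the representation sphere $S(V_\chi)$ of $G_p$. Its isotropy is rank one, since $\la \chi|_E,1_E\ra=0$ for every rank two $E$. Its cellular chain complex is a finite free complex over $\RG_{G_p}$. Because $\chi$ respects fusion, its dimension function is fusion-invariant, so it extends to a conjugation-invariant $\un{n}$ on $G$.
\item For $p=2\in\PG$, I do the same, but now the $2$-regularity hypothesis is needed so that the fusion-respecting effective character gives a dimension function compatible with $G$-conjugation.
\item For $p\notin\PG$, the $p$-rank is at most one, and I start from a Swan periodic complex \cite{Swan:1960} over $RG$. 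I then attach, for each nontrivial $H\in\cH$, complexes supported on subgroups above $H$, so as to produce the prescribed homology $\un{n}(H)$ at $H$.
\end{enumerate}
Hypothesis (ii) is used in this last step: $\rk_q(N_G(H)/H)\le 1$ for $q\ne p$ means the $q$-local Weyl-group problem at $H$ is again a rank one (periodic) problem, so these local pieces can be built.

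Second, I lift each $p$-local complex from $G_p$, or from a subgroup controlling $p$-fusion, to $G$. For this I use Theorem \ref{thm:hpythmc}, which identifies the relevant $\Ext$-groups, and hence the $k$-invariants of Dold's algebraic Postnikov sections, with those over the subgroup. The centralizer-triviality hypothesis of Theorem \ref{thm:hpythmc} is arranged by making the complexes oriented; this is one place where 2-regularity enters. After multiplying $\un{n}$ by suitable integers (joins) to make the dimension functions agree, I glue the $\CC^{(p)}$ over $\bZ$ via Postnikov sections, exactly as in the $S_5$ case. This gives a finite projective $\ZG_G$-complex that is a homology $\un{n}$-sphere. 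Taking further joins kills the finiteness obstruction in $\tilde K_0$ and makes the complex free.

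Third, I check conditions (i)–(iii) of Definition \ref{def:algrep}, so that Theorem \ref{thm:2014thma} yields a tight free complex. Corollary \ref{cor:2014corb} then gives a finite $G$-CW-complex $X$ with isotropy in $\cH$ and $\dim X^H=\un{n}(H)$. Finally I upgrade homology spheres to homotopy spheres by arranging $\un{n}(H)\ge 3$ and using simple connectivity of the fixed sets.

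I expect the main obstacle to be the compatibility of the local dimension functions, together with condition (iii) in the first step and its $G$-conjugation compatibility. A homotopy representation needs one function $\un{n}$ on all of $\cH$, while the characters are chosen independently for each prime. My first attempt will be to choose the $p$-local functions to be constant off the trivial subgroup, up to scaling, on $p$-subgroups, and to use the fact that subgroups in $\cH_p$ and $\cH_q$ intersect trivially for $p\ne q$.
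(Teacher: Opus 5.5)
Your global architecture matches the paper's outline: local models at each prime, gluing via Dold's Postnikov sections, removing the finiteness obstruction, tightening by Theorem~\ref{thm:2014thma}, and realizing by Corollary~\ref{cor:2014corb}. The local step, however, has a real gap at the primes of rank two.

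For $p\in\PG$, your $\CC^{(p)}$ comes from $S(V_\chi)$ and is supported only on $p$-subgroups. So $\CC^{(p)}(H)=0$ for every $H\in\cH_q$ with $q\in\PG$, $q\neq p$. But $\un{n}(H)\ge 0$ for the nontrivial $q$-subgroups that have fixed points in the $q$-local model, and there are such subgroups because a rank two $q$-group cannot act freely on a sphere. Hence $\CC^{(p)}$ is not a $\bZp$-homology $\un{n}$-sphere. Joining $\CC^{(p)}$ with itself does not change its support, so your rescaling cannot repair this.

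The paper's construction explicitly \emph{adds homology} to each $p$-local model at the subgroups belonging to the other primes. At a nontrivial $H\in\cH_q$ one attaches a piece $E_HP_H$, with $P_H$ a finite projective periodic complex over $\bZp[N_G(H)/H]$. Such pieces live on subgroups subconjugate to $H$, not above $H$ as you wrote. They exist because $\rk_p(N_G(H)/H)\le 1$, which is exactly hypothesis (ii).

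You invoke (ii) only at primes $p\notin\PG$, where it holds automatically: $|H|$ is prime to $p$, so a Sylow $p$-subgroup of $N_G(H)/H$ is isomorphic to a $p$-subgroup of $G$, which has $p$-rank at most one. The hypothesis that drives the construction is therefore never used where it is needed. Your plan fails as soon as $|\PG|\ge 2$, for example for $G=A_6$ with $\PG=\{2,3\}$.

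The ``compatibility of dimension functions'' you flag is not the real difficulty. One simply defines $\un{n}$ on each $\cH_p$ from the $p$-local representation, scaled to a common value at $1$. The difficulty is realizing those values $\bZ_{(q)}$-locally for $q\neq p$.

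Two further steps do not work as stated.

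First, Theorem~\ref{thm:hpythmc} transfers $\Ext$-groups, and hence $k$-invariants, only from a subgroup that controls $p$-fusion. $G_p$ controls $p$-fusion only when $G$ is $p$-nilpotent, and often no proper subgroup does. In $PSL_2(7)$, a subgroup controlling $2$-fusion must contain the normalizers $S_4$ of both classes of Klein four subgroups, so it is all of $G$. ``Lifting from $G_p$'' is therefore unavailable in general. The paper's general technical result instead takes as input a compatible collection of representations on all $p$-subgroups, and constructs the $p$-local complexes over $\bZp\G_G$ from that input.

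Second, conjugation invariance of $\un{n}$ is automatic once $\chi$ respects fusion, so that cannot be what $2$-regularity is for. The paper uses $2$-regularity as an additional closure property at $p=2$. That is the kind of input condition (iii) of Definition~\ref{def:algrep} seems to require when $K$ and $L$ lie in different Sylow $2$-subgroups. Your third step asserts (iii) without argument.
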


This result is an extension of our earlier joint work with Semra Pamuk  \cite{Hambleton:2013} for the  permutation group $G = S_5$ of order 120,  and gives a new proof in that case:  for $G=S_5$, the set $\PG$ includes only the  prime $2$ and it can be easily seen that $G$ satisfies the rank one intersection property. 
 The second condition above also holds since all $p$-Sylow subgroups of $S_5$ for odd primes are cyclic.

  More generally, using Theorem \ref{thm:main2016}, we obtain many new non-linear $G$-CW-complex examples.  
In particular, we show that the alternating group $A_6$ admits finite $G$-CW-complexes $X \simeq S^n$ with rank one isotropy  (see  \cite[Example 6.5]{Hambleton:2016}). We also discuss  why $G=A_7$ can not admit such actions if we require $X$ to be an $G$-homotopy  representation with rank one prime power isotropy (see  \cite[Example 6.7]{Hambleton:2016}). In fact we show exactly which  of the rank two simple groups (see the list in \cite[p.423]{Adem:2001}) can admit such actions. 

\begin{theorem}[{\cite[Theorem C]{Hambleton:2016}}]\label{hy2016thmC}
Let $G$ be a finite simple group of rank two. Then there exists a finite   $G$-homotopy representation with rank one isotropy of prime power order  if and only if 
$G$ is one of the following: (i) $PSL_2(q)$, $q\geq 5$, (ii)  $PSL_2 (q^2)$, $q \geq 3$, 
(iii) $ PSU_3(3)$, or (iv) $ PSU_3(4)$.
\end{theorem}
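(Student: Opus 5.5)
The plan is to treat the two directions separately, working through the list of rank two finite simple groups in \cite[p.423]{Adem:2001}. That list consists of $PSL_2(q)$, $PSL_2(q^2)$, $PSL_3(q)$, $PSU_3(q)$, $A_7$, $M_{11}$, $PSL_3(4)$ and a few further sporadic and small cases.

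For the ``if'' direction, we apply Theorem \ref{thm:main2016} to each of the four families. This requires checking two conditions.

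\emph{Condition (i).} We must verify $2$-regularity whenever $2\in\PG$, and $\Qd(p)$-freeness at the odd primes in $\PG$.
\begin{itemize}
\item For $PSL_2(q)$ with $q$ odd, the Sylow $2$-subgroup is dihedral. The odd Sylow subgroups are cyclic except at the defining characteristic $p$ when $q=p^k$. At that prime the $p$-Sylow subgroup is elementary abelian with normalizer a Borel subgroup, and a Borel subgroup cannot involve $\Qd(p)$, since $SL_2(p)$ would have to appear in a torus quotient.
\item For $PSL_2(2^k)$ only the prime $2$ has rank two, and we check $2$-regularity there.
\item For $PSL_2(q^2)$ the analysis is analogous.
\item For $PSU_3(3)$ and $PSU_3(4)$ the primes in $\PG$ are small, so the conditions are direct computations from the known Sylow normalizers.
\end{itemize}
The $p$-effective characters needed inside the proof of Theorem \ref{thm:main2016} then come from \cite[Theorem 47]{Jackson:2007}.

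\emph{Condition (ii).} For each nontrivial rank one $p$-subgroup $H$, we must show $\rk_q(N_G(H)/H)\le1$ for $q\ne p$. We compute normalizers of cyclic subgroups using the known subgroup structure. In $PSL_2(q)$, for instance, normalizers of cyclic subgroups are dihedral or Borel subgroups, whose quotients have cyclic odd Sylow subgroups.

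For the ``only if'' direction, suppose $X$ is a finite $G$-homotopy representation with rank one prime power isotropy. By the necessary condition discussed before Definition \ref{def:pEffective}, $G$ is $\Qd(p)$-free for odd $p\in\PG$. There are two further necessary conditions.
\begin{itemize}
\item \emph{Via Smith theory.} For $1\ne H$ a rank one $p$-subgroup, $X^H$ is a homotopy sphere with an $N_G(H)/H$-action whose isotropy is again of prime power order. If some $q$-subgroup of $N_G(H)/H$ had rank two, a Smith theory and rank argument (as in \cite[Corollary 6.3]{Hambleton:2016}) would force rank two isotropy of the form $H\times Q$, which is not of prime power order. So condition (ii) is necessary as well.
\item \emph{Via Borel--Smith at $p=2$.} The dimension function satisfies the Borel--Smith conditions by Theorem \ref{thm:IntroBorel-Smith}. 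Combined with the homotopy representation conditions (ii) and (iii) of Definition \ref{def:algrep}, this gives a $2$-local fusion restriction of $2$-regularity type, in the spirit of \cite[Example 6.7]{Hambleton:2016} for $A_7$.
\end{itemize}
We then eliminate the remaining groups one at a time.
\begin{itemize}
\item $PSL_3(q)$ with $q$ odd contains $\Qd(p)$-type sections at the defining characteristic, or fails condition (ii) through the centralizer of an involution, which is $GL_2(q)$-like and has $q'$-rank two.
\item $PSU_3(q)$ for $q>4$ fails condition (ii) in the same way.
\item $A_7$, $M_{11}$ and $PSL_3(4)$ fail either $2$-regularity or condition (ii).
\end{itemize}

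The main obstacle I expect is the ``only if'' direction at the prime $2$. Theorem \ref{thm:main2016} only gives sufficiency, so the necessity of a $2$-local condition must be extracted directly from the Borel--Smith conditions and the maximality property of Proposition \ref{cor:max}, applied to dihedral and semidihedral $2$-subgroups. I would first work out $A_7$ explicitly, following \cite[Example 6.7]{Hambleton:2016}, and then see whether that argument transfers uniformly to the other excluded groups.
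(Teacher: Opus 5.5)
\textbf{The ``if'' direction fails for families (iii) and (iv).} Neither $PSU_3(3)$ nor $PSU_3(4)$ satisfies the hypotheses of Theorem~\ref{thm:main2016}, so you cannot get existence for them that way.

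For $G=PSU_3(4)=SU_3(4)$ the diagonal torus is $(\bbZ/5)^2$, so $5\in\PG$. Take $H=\langle \mathrm{diag}(\lambda,\lambda,\lambda^{-2})\rangle$ with $\lambda^5=1\neq\lambda$. Its centralizer contains $SU_2(4)\cong A_5$, so $\rk_2(N_G(H)/H)=2$ and condition (ii) fails.

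For $PSU_3(3)$, all involutions are conjugate, so $\Omega_1(Z(G_2))$ is not strongly closed. The paper also records that $PSU_3(3)$ fails the rank one intersection property, so the $2$-local hypothesis (i) is unavailable.

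The paper handles these two groups differently: it uses orthogonal representations $V$ whose unit spheres $S(V)$ have rank one prime power isotropy. A representation sphere is already a finite homotopy representation. Theorem~\ref{thm:main2016} is only used for families (i) and (ii).

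\textbf{The ``only if'' direction rests on a false necessity claim.} Condition (ii) as stated is not necessary, and $PSU_3(4)$ is a counterexample: it admits such an action although (ii) fails. Your Smith-theory argument only proves $\rk_q(N_G(H)/H)\le 1$ for those $H$ with $X^H\neq\emptyset$. So every elimination via (ii) also needs an argument that the offending $H$ is forced into $\Iso(X)$, for example Borel's formula for a rank two elementary abelian subgroup combined with fusion.

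Even with that repair, your treatment of $PSU_3(q)$, $q\geq 5$, through the involution centralizer breaks down. For $q=7$, $C_G(t)\cong GU_2(7)$ has order $2^7\cdot 3\cdot 7$, so it has no rank two $r$-subgroup for odd $r$. In fact $PSU_3(7)$ is $\Qd(p)$-free and one checks that (ii) holds for all its rank one $2$- and $7$-subgroups. Your method therefore cannot exclude it.

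This is why the paper argues differently:
\begin{enumerate}
\item $PSL_3(q)$ with $q$ odd, and $PSU_3(q)$ with $9\mid q+1$, are excluded because they are not $\Qd(p)$-free.
\item The remaining $PSU_3(q)$, $q\geq 5$, are excluded by the Borel--Smith conditions (Section~7 of \cite{Hambleton:2016}).
\item $A_7$ needs the separate homotopy-representation argument of \cite[Example 6.7]{Hambleton:2016}.
\end{enumerate}

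Failing $2$-regularity can never exclude a group, since it is only a sufficient hypothesis in Theorem~\ref{thm:main2016}. Also, $PSL_3(4)$ has $2$-rank $4$, so it does not belong on the list of rank two simple groups.
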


We remark that  $G = PSL_3(q)$, $q$ odd, and $G = PSU_3(q)$, with $9 \mid (q+1)$, are the rank two simple groups that are not $\Qd(p)$-free at some odd prime. The remaining simple groups $G=PSU_3(q)$, $q\geq 5$, are eliminated by  the Borel-Smith conditions (see \cite[Section 7]{Hambleton:2016}). 
 The groups $PSU_3(3)$ and $PSU_3(4)$ have a linear actions on  spheres with rank one prime power isotropy. We note that the group $G=PSU_3(3)$  does not satisfy the rank one intersection property
 (see \cite[Example 6.8]{Hambleton:2016}). 
 
 We will obtain Theorem \ref{thm:main2016} from a more general technical result, 
which accepts as input a compatible collection of representations defined on all $p$-subgroups of $G$, for a given set of primes, 
and produces a finite $G$-CW-complex. 
In principle,  this method could be used to construct other interesting non-linear examples for finite groups with specified $p$-group isotropy.

 The proofs build on the theory of algebraic homotopy representations and a local-global construction (see \cite[Sections 3-6]{Hambleton:2016}). We construct $p$-local chain complexes where  the isotropy subgroups are $p$-groups, and then add homology to these local models so that these modified local complexes $\CC \up$ all have exactly the same dimension function.  Results established in \cite{Hambleton:2013} are used to  glue these algebraic complexes together over $\bZ\G_G$, and  then  to eliminate a finiteness obstruction.We then combine these ingredients to
 complete the proof.  Finally, we discuss the necessity of the conditions in Theorem \ref{thm:main2016} and provide  a nonlinear action for the group $G=A_6$.
 
 \begin{remark} Examples of $G$-CW-complexes homotopy equivalent to spheres provide input into the Adem-Smith construction \cite{Adem:2001} of free actions of finite groups on products of spheres. Here is a special case of one of their main results.
 \begin{theorem}[Adem and Smith {\cite[Theorem 1.4]{Adem:2001}}] Let $G$ be a finite group, and let $X$ be a finite dimensional $G$-CW-complex with rank one isotropy, which is homotopy equivalent to an  $n$-sphere, $n>1$. Then $G$ acts freely on a finite CW-complex homotopy equivalent to $S^n \times S^m$, for some $m >0$.
 \end{theorem}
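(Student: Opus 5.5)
The plan is the classical Adem--Smith argument: realize the $n$-sphere as the total space of a spherical fibration, pull it back to a bundle over $X$ with free action, and take its total space. Since all isotropy groups $G_x$ have rank one, each $G_x$ has periodic cohomology by the Artin--Tate/Cartan--Eilenberg characterization of rank one groups. Hence each $G_x$ acts freely on some finite complex homotopy equivalent to a sphere (Swan \cite{Swan:1960}), or at least admits a free action on a homotopy sphere of some dimension $m-1$.

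The key construction is a $G$-equivariant spherical fibration $E \to X$ with fibre $S^{m}$, for a suitably large common dimension $m$. Over each cell of type $G/H$ (with $H$ of rank one), its restriction to a point is an $H$-sphere with free $H$-action. Equivalently, I will build a map $X \to B$, where $B$ classifies $G$-fibrations whose fibres over points with isotropy $H$ carry free $H$-actions. I would construct this inductively over the equivariant skeleta of $X$, which is finite dimensional, so only finitely many steps are needed.

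At each stage, the obstructions to extending over a cell $G/H\times D^k$ lie in homotopy groups of spaces like $\Map(S^{m},S^{m})_H$ (self-equivalences commuting with a free $H$-action). These groups are finite in the relevant range after replacing $m$ by a multiple (fibrewise join with itself). This is the \emph{main obstacle}: killing the obstructions. I expect to handle it by passing to iterated fibrewise joins $E^{*k}$, which multiplies obstruction classes by $k$. The period of the fibres can be chosen uniformly by taking $m+1$ to be a common multiple of the periods of all isotropy groups. The group $\pi_*$ of the relevant monoid is finite in positive degrees, and the stable rational classes vanish for spheres.

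Once $p\colon E\to X$ exists with $G$ acting freely on each fibre $E_x$ via $G_x$, the $G$-action on $E$ is free. Indeed, if $g e = e$ then $g$ fixes $p(e)=x$, so $g\in G_x$, which acts freely on $E_x$. Replacing $E$ by an equivalent finite $G$-CW-complex (using finiteness of $X$ and of the fibres, and Wall-type finiteness arguments, with a possible further join to kill a finiteness obstruction), we get a finite free $G$-complex. Finally, $X\simeq S^n$ with $n>1$ is simply connected, so the fibration $S^m\to E\to S^n$ is orientable. Its Euler class lives in $H^{m+1}(S^n)$, which vanishes after choosing $m+1\neq n$, e.g.\ after a further join, so the Serre spectral sequence collapses. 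Simple connectivity and a homology computation then give $E\simeq S^n\times S^m$; for this, note the fibration is classified by an element of $\pi_{n-1}$ of $G(m+1)$ that can be made trivial after joining, since that group is finite for $m$ large.
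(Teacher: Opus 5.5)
The survey does not prove this statement; it quotes it from Adem and Smith \cite{Adem:2001}. Their argument goes through the Borel construction. Rank one isotropy and $\dim X<\infty$ give $X_{hG}=EG\times_G X$ periodic cohomology. Their main theorem then turns periodic cohomology into an orientable spherical fibration over $X_{hG}$ whose total space is finite-dimensional. Its pull-back to the universal cover $EG\times X\simeq S^n$ (this is where $n>1$ enters) is a free $G$-space, because the base is free.

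You instead put the fibration over $X$ itself and get freeness from the fibres. This is the equivariant-fibration method behind Theorem~\ref{thm:MainConstruction}. It buys a finite-dimensional base, so the obstruction theory stops after finitely many steps and no periodicity theorem for infinite complexes is needed. The price is that the fibres vary with the isotropy, and this creates a step your sketch omits.

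\textbf{The gap: compatibility of fibre types.} Fixing a common dimension $m$ does not fix the fibres. A free $K$-homotopy $m$-sphere is determined, up to $K$-homotopy equivalence and sign, by its $k$-invariant, a generator of $H^{m+1}(K;\bZ)\cong\bZ/|K|$. Generators differing by a unit other than $\pm1$ give inequivalent $K$-spheres (e.g.\ the free $\cy{5}$-actions on $S^3$ underlying $L(5;1)$ and $L(5;2)$).

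\emph{Why it blocks the construction.} Suppose a $1$-cell of type $G/K$ has ends at points with isotropy $H_0,H_1\geq K$, or at $x$ and $gx$. Extending over it requires $\Res^{H_0}_K F_{H_0}\simeq_K\Res^{H_1}_K F_{H_1}$, and likewise for conjugates. Specifying the classifying $G$-space you invoke needs exactly such a compatible family, with $K$-fixed set $B\mathrm{Aut}_K(F_K)$. This condition is not an element of a homotopy group of $\mathrm{Aut}_K(S^m)$, so finiteness of those groups does not address it. In Adem--Smith's proof its counterpart is a class in $H^*(X_{hG};\bZ)$ restricting to a periodicity generator on every isotropy group.

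\emph{How to close it with your own join trick.}
\begin{enumerate}
\item Restriction and conjugation send generators of $\widehat H^{m+1}(H;\bZ)$ to generators, since they are units.
\item Any two generators of $\widehat H^{m+1}(K;\bZ)$ differ by some $\lambda\in(\bZ/|K|)^\times$.
\item The $k$-invariant of an $N$-fold join is the $N$-th power.
\end{enumerate}
Take $N$ divisible by the exponents of all the groups $(\bZ/|K|)^\times$. Then all generators of a given degree have the same $N$-th power. So after replacing each chosen $F_H$ by $F_H^{*N}$, all restrictions and conjugates agree.

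Only after this do your remaining obstructions lie in the groups $\pi_j\mathrm{Aut}_K(F_K)$, $0\le j\le\dim X-2$. These are finite once $m$ is large compared with $\dim X$. Note they are not finite in all positive degrees: $\pi_m$ is infinite.

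\textbf{Two smaller points.}
\begin{enumerate}
\item $X$ is only assumed finite-dimensional, so you cannot use finiteness of $X$. What you get is a finite-dimensional free $E$ with $E/G$ finitely dominated. You must then kill its Wall obstruction in the finite group $\widetilde K_0(\bZ G)$, e.g.\ by checking that the obstruction is additive under fibrewise join.
\item The homology computation alone does not give $E\simeq S^n\times S^m$. Your final clause does: killing the classifying element in $\pi_{n-1}(SG(m+1))\cong\pi^s_{n-1}$ by further joins.
\end{enumerate}
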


 Our results in Theorem \ref{hy2016thmC} extend the list of examples in \cite[Theorem 1.7]{Adem:2001} for the rank two finite simple groups. 
 \end{remark}
 
 \section{Group actions on homotopy spheres with prime power isotropy}\label{sec:eleven}
 
 In the less restrictive setting (C), where we aim to construct $G$-CW-complexes homotopy equivalent to a sphere 
 (rather than homotopy representations), there is a complete result for rank two finite 
 group actions. 
 
 \begin{theorem}[{\cite[Theorem A]{Hambleton:2017}}]\label{thm:HomotopySpheresA}
 Let $G$ be a finite group of rank two. If $G$ admits a finite $G$-CW-complex $X\simeq S^n$ with rank one  isotropy then $G$ is $\Qd(p)$-free. Conversely, if $G$ is $\Qd(p)$-free, then there exists a finite $G$-CW-complex $X\simeq S^n$ with rank one prime power isotropy.
 \end{theorem}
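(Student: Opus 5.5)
The statement has two directions, and I would handle them separately.

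\emph{Necessity.} Suppose $X \simeq S^n$ is a finite $G$-CW-complex with rank one isotropy, and suppose $\Qd(p)$ is $p'$-involved in $G$ for some odd prime $p$. Then there is a $p'$-subgroup $K \leq G$ with $\Qd(p) \leq N_G(K)/K$. Since $|K|$ is prime to $p$, mod-$p$ Smith theory does not apply to $K$ directly. Instead I would pass to the fixed set $X^K$, or more precisely to the deflated chain complex $\Def^{N}_{N/K}$ of the $\bZp\G$-complex of $X$ with $N=N_G(K)$, using Lemma \ref{lem:deflation}. A transfer argument shows that $X^K$ is a $\bZp$-homology sphere, possibly empty. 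Because $K$ has order prime to $p$, taking $K$-fixed points commutes with $p$-local homology, since the averaging idempotent lives in $\bZp K$. So $X^K$ is $\bZp$-acyclic or a $\bZp$-homology sphere.

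Restricting to the preimage of $\Qd(p)$ then gives a finite projective $\bZp\G_{\Qd(p)}$-complex which is a $\bZp$-homology sphere with rank one isotropy. If $X^K$ is empty, I would replace $K$ by a smaller subgroup where the fixed set is nonempty, for instance by joining with a free complex or by a dimension argument. This reduces the problem to showing that $\Qd(p)$ itself admits no such $p$-local complex. That is \"Unl\"u's theorem from Example \ref{ex:Qdp}, or its algebraic version \cite[Proposition 5.14]{Hambleton:2014}. Its proof uses the Borel--Smith conditions (Theorem \ref{thm:IntroBorel-Smith}) at $p$ together with fusion in $\Qd(p)$, where $SL_2(p)$ acts transitively on the order-$p$ subgroups of $(\bZ/p)^2$. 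The resulting dimension function on the rank one $p$-subgroups would have to be fusion-invariant, and this contradicts the Borel--Smith condition on $(\bZ/p)^2$, namely that the sum of the $n(H)-n(E)$ equals $n(1)-n(E)$ in the elementary abelian case.

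\emph{Sufficiency.} Here I would follow the local-global strategy of Theorem \ref{thm:main2016}, but in setting (C), so no tightness is required. For each prime $p \in \PG$, the assumption that $G$ is $\Qd(p)$-free gives a $p$-effective character $\chi_p$ on $G_p$ respecting fusion. This holds automatically for $p=2$ by \cite[Theorem 47]{Jackson:2007}. The unit sphere $S(\chi_p)$ is a $G_p$-complex with rank one isotropy.

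Next I would use Theorem \ref{thm:hpythmc}, applied to the subgroup controlling fusion, together with Theorem 3.9 of \cite{Hambleton:2013} to lift the cellular chain complex of $S(\chi_p)$ to a finite projective $\bZp\G_G$-complex $\CC^{(p)}$ with isotropy in the rank one $p$-subgroups. For primes with $\rk_p G \leq 1$, I would use Swan's periodic complexes instead. Because we only need $X \simeq S^n$ and not a homotopy representation, I can use suspensions and joins to make all the local complexes $\CC^{(p)}$ have the same total homological dimension $n$, while allowing the fixed sets to have arbitrary homology. This is why condition (ii) of Theorem \ref{thm:main2016} and $2$-regularity can be dropped.

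Finally, I would glue the $\CC^{(p)}$ over $\bZ\G_G$ by algebraic Postnikov sections, matching $k$-invariants via the Mackey structure of Theorem \ref{thm:Mackey structure}. Then I would kill the finiteness obstruction by taking joins, which multiply the obstruction, and realize the result geometrically by a realization result of the type of Corollary \ref{cor:2014corb}, adapted to non-tight complexes.

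I expect two steps to be the main obstacles. The first is the necessity step showing that $X^K$ carries a usable $\bZp$-sphere structure over $\Qd(p)$ with rank one isotropy. The second is, on the sufficiency side, realizing a non-tight complex geometrically when the fixed sets are not spheres. For the latter I would first try building $X$ skeleton by skeleton by equivariant cell attachment, since only the total homotopy type needs to be a sphere.
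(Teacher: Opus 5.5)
Your proposal has genuine gaps in both directions.

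\emph{Necessity.} The reduction to $X^K$ fails. The idempotent $e_K=\frac{1}{|K|}\sum_{k\in K}k\in\bZp K$ identifies $H_*(X;\bZp)^K$ with $H_*(X/K;\bZp)$, which is the homology of the \emph{orbit space}. It says nothing about the fixed set, and Smith theory gives no $p$-local information about $X^K$ when $|K|$ is prime to $p$. ``Shrinking $K$'' is not an option either, since $K$ is dictated by the involvement. There are two ways to repair this. You can use $X/K$ as an $N_G(K)/K$-complex: its isotropy groups $(G_x\cap N_G(K))K/K$ still have cyclic Sylow $p$-subgroups, because $K$ is a $p'$-group. Alternatively, avoid the reduction entirely. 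Let $\tilde P$ be a Sylow $p$-subgroup of the preimage of $\Qd(p)$; it maps isomorphically onto the Sylow subgroup $P\cong p^{1+2}_+$ of $\Qd(p)$. Sylow's theorem in $QK$ shows that subgroups $Q\le\tilde P$ with conjugate images in $\Qd(p)$ are already conjugate in $G$, so you may work with $X$ itself as a $\tilde P$-space.

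Your Borel--Smith step also does not produce a contradiction. With $n(E)=-1$ and all lines of $E$ having a common value $m$, the condition on $E$ only gives $n(1)+1=(p+1)(m+1)$, which is satisfiable (e.g.\ $m=1$). The missing input is the condition on the section $P/Z(P)\cong(\bZ/p)^2$. All $p+1$ maximal subgroups of $P$ are elementary abelian of rank two, so $n(Z(P))=n(P)=-1$. Since $Z(P)$ is a line of $E$ and $SL_2(p)$ is transitive on these lines, every line of $E$ has value $-1$, and then the condition on $E$ forces $n(1)=-1$. (The paper simply cites \"Unl\"u and \cite[Proposition 5.4]{Hambleton:2016} for this direction.)

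\emph{Sufficiency.} The paper does not use the algebraic local--global method here. It takes Jackson's $p$-effective characters at every prime dividing $|G|$ and passes to multiples of a common dimension $n$ to get a $G$-invariant family $\{V_p\}$. It then applies Theorem~\ref{thm:MainConstruction}: an equivariant spherical fibration over a contractible $G$-space with isotropy in $\cP$, followed by Oliver--Petrie $G$-CW surgery. This yields a finite $X\simeq S^{2kn-1}$ with prime power isotropy and $\Res^G_{G_p}X$ $p$-locally equivalent to $S(V_p^{\oplus k})$, hence $X^H=\emptyset$ for every rank two $p$-subgroup $H$. Your chain-complex route leaves each essential step open.

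(a) Theorem~\ref{thm:hpythmc} transfers $\Ext$-groups only from a subgroup that controls $p$-fusion and already carries the complex. $G_p$ controls $p$-fusion only when $G$ is $p$-nilpotent. A character that merely respects fusion does not by itself yield a $\bZp\G_G$-complex. Producing a $G$-level object from it is exactly the content of Theorem~\ref{thm:MainConstruction}, and even there only after replacing $V_p$ by $V_p^{\oplus k}$.

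(b) Gluing over $\bZ\G_G$ requires the local complexes to agree rationally at every subgroup in the common support. This includes nontrivial $q$-subgroups for $q\neq p$, where a Swan complex is zero, so matching top dimensions is not enough. The fixed sets also cannot have ``arbitrary homology'', since $H_*(X^H;\bbQ)$ is determined by Smith theory at the prime of $H$. Condition (ii) and $2$-regularity are precisely the hypotheses that make the local constructions in Theorem~\ref{thm:main2016} work, and you offer no replacement for them.

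(c) There is no realization theorem for non-tight complexes. Corollary~\ref{cor:2014corb} requires an algebraic homotopy representation with $\un{n}\geq 3$, and attaching cells skeleton by skeleton restates the realization problem rather than solving it.
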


The necessity of the $\Qd(p)$-free condition was established in \cite[Theorem 3.3]{Unlu:2004} and \cite[Proposition 5.4]{Hambleton:2016}.
For the other direction, we introduce a more general method for constructing finite group
actions on homotopy spheres with prime power isotropy.  The input of this construction method is a $G$-invariant family representations of Sylow subgroups.

\begin{definition}\label{def:Ginvariant}  For each prime $p$ dividing the order of $G$, let $G_p$ denote a fixed Sylow $p$-subgroup of $G$.
A representation $V_p$ of $G_p$ is called a \emph{Sylow representation}.  A family $\{ V_p\}$  of Sylow 
representations over all primes $p$ is \emph{$G$-invariant} if the character of $V_p$ respects fusion in $G$ and for all $p$, the dimension of 
$V_p$ is equal to a fixed positive integer $n$. 
\end{definition}

Let $\cP$ denote the family of all subgroups of $G$ which has prime power order. Given a $G$-invariant family of Sylow representations $\{ V_p \}$, we construct a $G$-equivariant spherical fibration $q\colon E \to B$ over a contractible $G$-space $B$ with isotropy in $\cP$ such that  for every $x\in \Fix(B, G_p) = B^{G_p}$, the fiber $q^{-1} (x)$ is $G_p$-homotopy equivalent to $S (V_p ^{\oplus k})$ for some $k\geq 1$ (see \cite[Theorem 3.4]{Hambleton:2017}). The total space of this $G$-fibration admits a $G$-map $$f_0 \colon \coprod _p G \times _{G_p} S(V_p ^{\oplus k}) \to E.$$
By adapting the $G$-CW-surgery techniques introduced by Oliver and Petrie \cite{Oliver:1982} to this $G$-map, we obtain a finite  $G$-CW-complex $X \simeq S^{2kn-1}$ whose restriction to Sylow $p$-subgroups resembles the linear spheres $S(V_p^{\oplus k})$. In particular, we prove the following theorem (see \cite[Definition 3.6]{Hambleton:2017} for the definition of $p$-local $G$-equivalence)

\begin{theorem}[{\cite[Theorem B]{Hambleton:2017}}]\label{thm:MainConstruction} Let $G$ be a finite group.   Suppose that $\{V_p\colon G_p \to U(n)\}$ is  a $G$-invariant family of Sylow representations. Then there exists a positive integer $k\geq 1$ and a finite $G$-CW-complex $X\simeq S^{2kn-1}$ with prime power isotropy,  such that  the $G_p$-CW-complex $\Res ^G _{G_p} X$ is $p$-locally $G_p$-equivalent to $S(V_p ^{\oplus k} )$, for every prime $p \mid |G|$.
\end{theorem}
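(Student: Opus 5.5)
Following the outline just given, I would prove Theorem \ref{thm:MainConstruction} in three stages: build a $G$-equivariant spherical fibration out of the Sylow representations, compare it with the linear spheres by a $G$-map $f_0$, and then do $G$-CW surgery on $f_0$ to produce the complex $X$.

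\emph{The fibration.} First replace each $V_p$ by a suitable multiple $V_p^{\oplus k}$. Take $B$ to be a finite-dimensional contractible $G$-CW-complex with isotropy in $\cP$; a model for $E_{\cP}G$ will do, or a finite-dimensional replacement. Over each orbit type $G/P$ with $P\in\cP$, choose a $G_p$-conjugate of $P$ inside $G_p$ and take the fibre to be the $P$-sphere $\Res^{G_p}_P S(V_p^{\oplus k})$. Because the family $\{V_p\}$ is $G$-invariant (the characters respect fusion), this choice is independent of the conjugating element up to $P$-equivariant homotopy. Between two such orbits we need equivariant homotopy equivalences. The gluing over higher cells is obstructed by classes in Bredon cohomology of $B$ with coefficients in the homotopy groups of the equivariant self-equivalence monoids of the fibres. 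I would kill these obstructions by the Sylow/transfer arguments for cohomological Mackey functors (Theorem \ref{thm:hpythmc} and the Mackey structure of Section \ref{sect:nine}), localizing at each prime and passing to further multiples $k$ as needed. The multiples are needed because the relevant groups are finite, and taking joins with $k$ copies multiplies the obstruction classes, eventually killing them. Gluing all the local pieces over the contractible $B$ gives the fibration $q\colon E\to B$, and I would read off $q^{-1}(x)\simeq_{G_p} S(V_p^{\oplus k})$ for $x\in B^{G_p}$ directly from the construction.

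\emph{The map $f_0$.} Since $B^{G_p}$ is nonempty, choose a point $x_p$ in it. The fibre inclusion $S(V_p^{\oplus k})\to E$ is then $G_p$-equivariant, and inducing up gives the $G$-map
$$f_0\colon \coprod_p G\times_{G_p}S(V_p^{\oplus k})\to E.$$
The target $E$ is homotopy equivalent to $S^{2kn-1}$, but it is infinite-dimensional and has large isotropy. The source is finite.

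\emph{Surgery.} Following Oliver--Petrie, I would attach free and $\cP$-isotropy cells to the source, working inductively over the orbit types in $\cP$ (largest subgroups first), so as to make the map $p$-locally an equivalence on $P$-fixed sets for each $p$-subgroup $P$. Here the fixed sets $E^P$ are $p$-locally spheres by Smith theory applied to the fibres. The homology kernels to be killed are modules over the orbit category. Because $\un{n}$ is fixed locally by the $V_p$, I would handle them with the projectivity criterion (Theorem 3.9 of \cite{Hambleton:2013}, quoted in Section \ref{sec;eight}) and the chain-level algebra of Sections \ref{sec:four}--\ref{sec:six}. The result is a finite complex $X$ with a $G$-map to $E$ that is a homotopy equivalence nonequivariantly, and whose restriction to $G_p$ is $p$-locally equivalent to $S(V_p^{\oplus k})$.

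I expect the main obstacle to be the final finiteness obstruction. It lies in $\widetilde K_0$ of the orbit category, essentially in $\widetilde K_0(\bZ[W_G(P)])$ summed over $P\in\cP$, and I would try to kill it by again replacing $k$ by a multiple. This works because a Swan-type argument shows the obstruction class behaves multiplicatively under joins and lies in a finite group, so some power vanishes. The second delicate point is middle-dimensional surgery, where the kernel must be shown to be stably free so that it can be killed by attaching free cells rather than merely cells of the right homological type.
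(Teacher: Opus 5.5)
Your three-stage plan is the paper's plan. It builds a $G$-spherical fibration $q\colon E\to B$ over a contractible $G$-space with isotropy in $\cP$; the survey only quotes this from \cite[Theorem 3.4]{Hambleton:2017}. It then takes the $G$-map $f_0$ out of $\coprod_p G\times_{G_p}S(V_p^{\oplus k})$ and applies Oliver--Petrie $G$-CW-surgery to $f_0$.

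\textbf{Main gap: the construction of the fibration.}
\begin{itemize}
\item Killing obstruction classes by fibrewise joins gives a single $k$ only if finitely many obstruction groups can be nonzero. Finite-dimensionality of $B$ is what guarantees this.
\item $E_{\cP}G$ has no finite-dimensional model in general. For $G=\bbZ/pq$, the fixed set $(E_{\cP}G)^{\bbZ/q}$ is contractible. Since $(E_{\cP}G)^G=\emptyset$, the quotient $G/(\bbZ/q)\cong\bbZ/p$ acts on it freely, which is impossible in finite dimensions.
\item So you need some other finite-dimensional contractible $G$-CW-complex $B$ with isotropy in $\cP$. Its fixed sets $B^P$ are in general only mod-$p$ acyclic. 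That fact, not Smith theory on the fibres, is what makes $E^P$ a mod-$p$ homology sphere.
\item Such a $B$ cannot be finite in general: a generator of $\bbZ/pq$ has Lefschetz number $1$ on a finite contractible complex, so it has a fixed point. The existence of $B$ is therefore a genuine input, and you have not supplied it.
\item Your fallback does not apply. Theorem~\ref{thm:hpythmc} and the transfer argument need $\bZp$-local cohomological Mackey functor coefficients. Theorem~\ref{thm:hpythmc} itself also needs a subgroup controlling $p$-fusion.
\item The homotopy groups of the equivariant self-equivalence spaces of the fibres have no evident cohomological Mackey structure. Over $\Or_{\cP}G$, restriction to $G_p$ need not be injective. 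For example, over $E_{\cP}(\bbZ/pq)$, take the coefficient system equal to $\Fp$ at $\bbZ/q$ and $0$ elsewhere. Its Bredon cohomology is $H^*(\bbZ/p;\Fp)$, nonzero in every degree, yet it restricts to zero on $G_p$.
\end{itemize}

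\textbf{Second, smaller gap: the finiteness step.}
\begin{itemize}
\item Your surgery at $P\neq 1$ is only $p$-local. So the obstructions there lie in $\widetilde K_0(\bZp W_G(P))$, not in $\widetilde K_0(\bbZ W_G(P))$.
\item These groups are typically infinite; for example $\widetilde K_0(\bbZ_{(3)}[\bbZ/2])\cong\bbZ$. Passing to multiples of $k$ therefore cannot be relied on to kill a nonzero class there.
\item Those classes have to be shown to vanish outright. One way is to compare rational characters and Lefschetz numbers with the linear spheres, using that fixed sets of non-prime-power subgroups are empty on both sides.
\item Only the free-stage class in the finite group $\widetilde K_0(\bbZ G)$ can be disposed of by joins. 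That obstruction is additive under joins of odd-dimensional homotopy spheres, so what kills it is a multiple of $k$, not a power.
\end{itemize}
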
 

This theorem was stated by Petrie \cite[Theorem C]{Petrie:1979}  in a slightly different form and a sketched proof was provided. Related results were proved by tom Dieck   (see \cite[Satz 2.5]{Dieck:1982},  \cite[Theorem 1.7]{Dieck:1985a}).  The $G$-CW-complexes obtained using Theorem \ref{thm:MainConstruction} are not homotopy representations in general. The following example shows that for some $G$-invariant family of Sylow representations $\{ V_p\}$, it is not possible to obtain homotopy representations which are $p$-locally $G_p$-equivalent to $S(V_p ^{\oplus k} )$ for any $k\geq 1$.

\begin{example} Let $G$ denote the dihedral group of order $2q$, with $q$ an odd prime. Let $V_2$ be a trivial representation of $G_2 = \cy 2$, and let $V_q$ be a free unitary representation of $G_q = \cy q$, such that $\dim V_2 = \dim V_q$. Then Theorem \ref{thm:MainConstruction}  shows that there exists a finite $G$-CW-complex $X \simeq S^{m}$, with $\Fix(X, G_2) \simeq S^{m}$ ($2$-locally),  and  $\Fix (X, G)=\Fix(X, G_q) = \emptyset$, for some  integer $m=2kn-1$. However, these conditions imply that $\dim X > m$ by \cite[Proposition 2.10]{Hambleton:2014} since in this case $\un{n} (1) =\un{n} (G_2)=m$ but $\un{n}(G)\neq m$ (compare \cite[Theorem 4.2]{Straume:1981}). 
\end{example}

To prove Theorem \ref{thm:HomotopySpheresA} as a consequence of the construction given in Theorem \ref{thm:MainConstruction},
we use a theorem by Jackson \cite{Jackson:2007} which states that if $G$ is a rank two finite group which is $\Qd (p)$-free, 
then it has a Sylow representation $V_p: G_p \to U(n)$ with a $p$-effective character. By taking multiples of these representations if necessary, 
we can assume that they have a common dimension. This gives a $G$-invariant family $\{ V_p\}$ 
such that $\la V_p |_E , 1_E \ra =0$, for every elementary abelian $p$-subgroup $E\leq G$ with $\rk E=2$. 

Applying Theorem \ref{thm:MainConstruction}, we obtain a finite $G$-CW-complex  $X$ homotopy equivalent to a sphere $S^{2kn-1}$, for some $k\geq 1$, such that $\Res^G _{G_p} X$ is $p$-locally $G_p$-equivalent to $S(V_p ^{\oplus k} )$, 
for every prime $p$ dividing the order of $G$. In particular, for every $p$-subgroup $H \leq G$, the fixed point space $X^H$ has the same 
$p$-local homological dimension as the fixed point sphere $S(V_p  ^{\oplus k})^H$.  
Since $S(V_p) ^E= \emptyset$, we have $S(V_p  ^{\oplus k})^H = \emptyset$ 
for every subgroup $H \leq G$ with $\rk (H)=2$. Hence the isotropy subgroups of $X$ 
are all rank one subgroups of $G$ with prime power order.

\begin{remark} A similar idea for constructing spherical fibrations to obtain $G$-actions on homotopy spheres 
was introduced by Connolly and Prassidis \cite{Connolly:1989}. They proved the following result (conjectured by C.~T.~C.~Wall):
\begin{theorem}[Connolly and Prassidis {\cite[Corollary 1.4]{Connolly:1989}}]
A discrete group $\Gamma$ with finite virtual cohomological dimension acts freely and properly discontinuously on $\bbR ^m \times S^{n-1}$ for some $m$ and $n$,  if and only if $\Gamma$ is countable and $\widehat H^* (\Gamma; \bbZ)$  has periodic Farrell-Tate  cohomology.
\end{theorem}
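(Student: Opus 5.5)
The plan splits the statement into two directions. Necessity is a Gysin/Borel-construction argument. Sufficiency follows the spherical-fibration strategy sketched above for Theorem \ref{thm:MainConstruction}, carried out over a proper classifying space instead of over a finite group.

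\emph{Necessity.} Suppose $\Gamma$ acts freely and properly discontinuously on $W=\bbR^m\times S^{n-1}$.
\begin{enumerate}
\item Since $W$ is second countable and the action is properly discontinuous, orbits are discrete and closed, so $\Gamma$ is countable.
\item The quotient $M=\Gamma\backslash W$ is a manifold of dimension $m+n-1$. Because $W\simeq S^{n-1}$ and the action is free, $M\simeq E\Gamma\times_\Gamma S^{n-1}$. This gives a fibration $S^{n-1}\to M\to B\Gamma$.
\item If the action does not preserve orientation of the fibre homotopy sphere, first pass to $W\times W$ with the diagonal action to make it orientable.
\item The Gysin sequence then has the form $\cdots\to H^{k}(M)\to H^{k-n+1}(\Gamma)\xrightarrow{\cup e} H^{k+1}(\Gamma)\to H^{k+1}(M)\to\cdots$, where $e\in H^n(\Gamma;\bZ)$ is the Euler class. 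Since $H^{k}(M)=0$ for $k>m+n-1$, cup product with $e$ is an isomorphism in all sufficiently high degrees, with any coefficients.
\item For groups of finite vcd, Farrell--Tate cohomology agrees with ordinary cohomology above the vcd and is compatible with cup products. So $e$ becomes invertible in $\widehat H^*(\Gamma;\bZ)$, which gives periodicity.
\end{enumerate}

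\emph{Sufficiency, first step.} Assume $\Gamma$ is countable with periodic Farrell--Tate cohomology. Then every finite subgroup $H\le\Gamma$ has periodic cohomology, so by Swan \cite{Swan:1960} each $H$ acts freely on a finite complex homotopy equivalent to a sphere. Moreover, a periodicity class $e\in\widehat H^{n}(\Gamma;\bZ)$ restricts compatibly to periodicity generators of all $H$, so these local data are coherent under conjugation and inclusion. Let $\underline{E}\Gamma$ be a finite-dimensional model for the classifying space for proper actions; it exists because $\operatorname{vcd}\Gamma<\infty$. Over $\underline{E}\Gamma$ I would build, cell by cell over orbits $\Gamma/H\times D^k$, a $\Gamma$-equivariant spherical fibration $q\colon E\to \underline{E}\Gamma$. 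It should have the property that over each $x$ with isotropy $H$, the fibre is $H$-homotopy equivalent to a \emph{free} $H$-sphere.
\begin{itemize}
\item The obstructions to extending over each orbit cell lie in homotopy groups of monoids of equivariant self-maps of spheres.
\item These obstructions are killed by replacing the fibres with iterated fibrewise joins, that is, by taking $k$-fold multiples. Finiteness of $\dim\underline{E}\Gamma$ makes this a finite process.
\item Periodicity in Farrell--Tate cohomology is exactly what makes the degrees match up, playing the role that $G$-invariance played in Definition \ref{def:Ginvariant}.
\end{itemize}
Going further, I would try to make $q$ the sphere bundle $S(\xi)$ of a $\Gamma$-vector bundle $\xi$. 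This uses the fact that the equivariant $J$-homomorphism has finite cokernel on these groups, again after passing to multiples.

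\emph{Sufficiency, second step.} The total space $S(\xi)$ carries a free proper $\Gamma$-action, since each fibre over a point with isotropy $H$ is a free $H$-sphere. It is homotopy equivalent to $S^{n-1}$ because the base is contractible. To obtain a manifold, I would first equivariantly thicken $\underline{E}\Gamma$ to a contractible proper smooth $\Gamma$-manifold $U$, for instance a $\Gamma$-regular neighbourhood in a proper $\Gamma$-representation-like ambient. I then pull $\xi$ back to $U\times\bbR$. The manifold $U\times\bbR$ is contractible, open and simply connected at infinity in high dimensions, so it is homeomorphic to $\bbR^{m}$. The bundle $\xi$ is non-equivariantly trivial over this contractible base. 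Hence $S(\xi)\cong\bbR^m\times S^{n-1}$, with a free properly discontinuous $\Gamma$-action.

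\emph{Main obstacle.} I expect the hardest step to be the coherent construction of the equivariant spherical fibration, and its linearization, over all of $\underline{E}\Gamma$ simultaneously for infinitely many finite subgroups. Here I would lean on the finiteness of the number of conjugacy classes of finite subgroups, which holds for the groups of finite vcd that occur, such as virtually torsion-free groups with a cocompact model, and otherwise on a countable telescoping argument.
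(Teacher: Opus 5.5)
The survey only cites this theorem, noting that Connolly and Prassidis use equivariant spherical fibrations. Your necessity argument is the standard Gysin argument, and your first sufficiency step follows that spherical-fibration approach.

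\textbf{The gap: linearization.} The genuine gap is the step where you turn the fibration into a sphere bundle $S(\xi)$. Your whole manifold conclusion rests on it, and it cannot work in general.
If $\Gamma$ acts freely on $S(\xi)$, then over a point $x$ with isotropy $H$ the fibre $\xi_x$ must be a \emph{free} orthogonal $H$-representation. In such a representation an involution has no eigenvalue $1$, so it acts as $-1$ and is central. Hence periodic groups like $S_3$, or any dihedral group of order $2p$ with $p$ odd, have no free representation at all.
Take $\Gamma=S_3$ itself: it is finite (so $\mathrm{vcd}\,\Gamma=0$), countable, and has periodic Tate cohomology. Then $\underline{E}\Gamma$, and any thickening $U$ of it, has a $\Gamma$-fixed point, so no such $\xi$ exists, whatever multiple or join you take.
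A finite-cokernel statement for an equivariant $J$-homomorphism cannot help, because the obstruction is the fibre type itself, not a higher homotopy group. In fact, by Milnor's theorem $S_3$ cannot act freely on any closed manifold homotopy sphere. That is why the theorem is about $\bbR^m\times S^{n-1}$: the manifold has to come from a non-linear thickening, not from linear fibres.

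\textbf{How to repair it.} Stop at the total space $E$ of your fibration, which is a finite-dimensional, free, proper $\Gamma$-complex with $E\simeq S^{n-1}$. Then thicken the quotient rather than the base.
\begin{enumerate}
\item $E/\Gamma$ is homotopy equivalent to a countable, locally finite, finite-dimensional complex. Embed it properly in some $\bbR^N$ and take an open regular neighbourhood $M$.
\item Pass to the covering $\widetilde M$ of $M$ corresponding to $E\to E/\Gamma$. This is an open, parallelizable manifold homotopy equivalent to $S^{n-1}$, with a free properly discontinuous $\Gamma$-action.
\item It remains to show $\widetilde M\times\bbR^k\cong\bbR^m\times S^{n-1}$. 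Embed $S^{n-1}$ by a homotopy equivalence; its normal bundle is stably trivial because $\widetilde M$ is parallelizable.
\item Show the complement of a tubular neighbourhood is a collar. $\widetilde M$ has one end, which becomes simply connected at infinity after crossing with $\bbR$, so Siebenmann's end theorem applies.
\end{enumerate}

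\textbf{Two smaller corrections.} In the necessity direction, $W\times W$ is homotopy equivalent to $S^{n-1}\times S^{n-1}$, not to a sphere, so there is no Gysin sequence for it. Use instead the twisted Euler class in $H^n(\Gamma;\bZ^w)$ and its square, or a fibrewise join.

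In the obstruction theory, you rely on finitely many conjugacy classes of finite subgroups. This can fail for groups of finite vcd, for example an infinite free product of copies of $\bZ/2$. It is also not needed. The join trick only requires a uniform bound on the exponents of the obstruction groups. That bound comes from the bounded orders of finite subgroups, once you take the fibre dimension at least $\dim\underline{E}\Gamma$, so that the relevant groups $\pi_{j-2}(\mathrm{Aut}_H(S))$ are finite.
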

Later, the finiteness condition was removed by Adem and Smith \cite[Corollary 1.3]{Adem:2001}, and hence the result holds for any countable discrete group with periodic cohomology.

In the same paper, Connolly and Prassidis \cite[Theorem C]{Connolly:1989} showed that if $\Gamma$ was also assumed to be a virtual Poincar\'e duality group (with a mild additional condition on the lattice of its finite subgroups), then there exists a finite 
Poincar\'e complex $X$ with $\pi_1(X) \cong \Gamma$, and universal covering space $\widetilde X \simeq S^{n-1}$, for some $n$. The first manifold examples of smooth co-compact free actions on $\bbR ^m \times S^{n-1}$, for some $m$ and $n$, were constructed in  \cite[Theorem 8.3]{Hambleton:1991}.
\end{remark} 
 
Another consequence of the construction given in Theorem \ref{thm:MainConstruction} is a realization theorem for dimension functions satisfying the Borel-Smith conditions. Let $G$ be finite group and $\cP$ denote the collection of all subgroups of $G$ with prime power order.
If $X$ is a finite (or finite-dimensional) $G$-CW-complex such that $X$ is homotopy equivalent to a sphere, then by 
Smith theory, for each $p$-group $H \leq G$, the fixed point subspace $X^H$ has mod-$p$ homology of a sphere 
$S^{\underline{n} (H)}$. For convenience, we define $\Dim_{\cP} X \colon \cP \to \bbZ$ to be the conjugation invariant function with values 
$(\Dim _{\cP} X) (H)=\underline{n} (H)+1$ for every $p$-subgroup $H \leq G$, over all primes dividing the order of $G$.
 This change ensures that $\Dim _{\cP}$ is an additive function.
In \cite{Reeh:2018}, the following theorem is proved:

\begin{theorem}\label{thm:mainApp} Let $G$ be a finite group, and let $f\colon \cP \to \bbZ$ be a monotone conjugation invariant 
function that satisfies the Borel-Smith conditions. Then there is an integer $N\geq 1$ and a finite $G$-CW-complex $X \simeq S^n$, with prime power isotropy, such that $\Dim_{\cP} X=Nf$.
\end{theorem}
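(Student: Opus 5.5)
The plan is to reduce the statement to Theorem \ref{thm:MainConstruction}: it suffices to produce a $G$-invariant family of Sylow representations $\{V_p\}$ whose $p$-local dimension functions realize a multiple of $f$ restricted to $p$-subgroups. For each prime $p$ dividing $|G|$, restrict $f$ to the $p$-subgroups of $G_p$. This gives a function $f_p$ on subgroups of the $p$-group $G_p$ satisfying the Borel--Smith conditions, and it is invariant under $G$-fusion because $f$ is conjugation invariant in $G$.

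The first key input is the classical realization theorem for $p$-groups, due to tom Dieck (see \cite[III.5]{Dieck:1987}). For a $p$-group $P$, the group $C(P)$ of Borel--Smith functions coincides with the image of the dimension homomorphism $\Dim\colon RO(P)\to C(P)$, $V\mapsto (H\mapsto \dim V^H)$. So there is a virtual real representation $W_p = U_p - U_p'$ of $G_p$ with $\Dim W_p = f_p$. Two things must then be handled.

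\textbf{Removing the virtuality.} I would use monotonicity. The function $f_p$ is monotone, but a virtual representation with monotone dimension function need not be actual, so some care is needed. The first attempt is to add a large multiple of $\mathbb{R}[G_p]$ to both sides. This shifts the dimension function by a function that is itself Borel--Smith and realized, but it changes $f$ globally. Instead, I would pass to a multiple: the result is stated only up to an integer $N$, and a rationality/Artin induction style argument shows $N\cdot f_p$ is the character-dimension of a genuine representation. Concretely, with rational coefficients the Burnside-ring idempotent decomposition shows that monotone Borel--Smith functions lie in the cone spanned by dimension functions of permutation-type and irreducible representations. This is the step I expect to be the main obstacle, and I would consult \cite{Reeh:2018} for the needed lemma on positivity.

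\textbf{Fusion.} The character of $V_p$ must respect $G$-fusion. Here I would use the stable-element/characteristic-idempotent technique: averaging over a $G_p$-characteristic biset $\Omega$ for the fusion system $\mathcal F_{G_p}(G)$ transforms a representation into a fusion-stable virtual one with the same fixed-point dimension function on fusion-invariant inputs, after again multiplying by $|\Omega|$-related integers. Since $f_p$ is already fusion invariant, the dimension function is preserved up to scaling. Complexifying (taking $V\otimes\bbC$ viewed as unitary) doubles real dimensions, which is again absorbed into $N$. Finally, take multiples so that all $V_p$ have a common complex dimension $n$. This is possible since $f(1)$ is common to all primes, making the dimension functions of the $V_p$ proportional at the trivial group.

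Applying Theorem \ref{thm:MainConstruction} gives $X\simeq S^{2kn-1}$ with prime power isotropy and $\Res^G_{G_p}X$ $p$-locally equivalent to $S(V_p^{\oplus k})$. Hence $\Dim_\cP X(H)=\dim_\bbR (V_p^{\oplus k})^H$ for each $p$-subgroup $H$. This equals $Nf(H)$ for a single $N$ independent of $p$, by the uniform scaling chosen above, which completes the proof.
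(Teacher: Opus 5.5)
Your overall reduction is the right one: the statement follows from Theorem~\ref{thm:MainConstruction} once you have a $G$-invariant family of \emph{actual} Sylow representations whose fixed-point dimensions are a common multiple of $f$. However, both steps meant to produce this family have genuine gaps.

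The first is positivity. Passing to a multiple cannot turn a virtual representation into an actual one here. The dimension map from $R_{\bbQ}(G_p)\otimes\bbQ$ to rational-valued super class functions is injective: a rational character is constant on the generators of each cyclic subgroup, and it is recovered from the values $\dim V^C$, $C$ cyclic, by M\"obius inversion. So some multiple $Nf_p$ is the dimension function of an actual representation if and only if the unique rational virtual representation with dimension function $f_p$ is effective. That property is unchanged by scaling. Artin induction and Burnside-ring idempotents only give signed rational combinations, so they cannot supply it. Positivity has to come from monotonicity, and that is a genuine theorem, due to Dotzel and Hamrick \cite{Dotzel:1981}: for a $p$-group, a monotone Borel--Smith function is the dimension function of an actual real representation. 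Deferring this to \cite{Reeh:2018}, which is where the statement you are proving comes from, leaves the key step unproved.

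The second is that your fusion step undoes the first. Composing with the characteristic idempotent gives a fusion-stable element, but in general it is a virtual combination with non-integral $p$-local coefficients. You are then back to the positivity problem, while Theorem~\ref{thm:MainConstruction} needs actual unitary representations. Composing with an actual characteristic biset keeps representations actual but destroys proportionality. For example, take $G=S_3$, $p=2$, and $V$ the sign representation of $G_2$. Then $\Res^G_{G_2}\Ind^G_{G_2}V\cong \bbR\oplus V^{\oplus 2}$, whose values at $1$ and $G_2$ are $(3,1)$ rather than a multiple of $(1,0)$.

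In fact no averaging is needed. If $V$ is an actual real representation with $\Dim V=f_p$, let $W$ be the sum of the Galois conjugates of $V\otimes\bbC$ over $\mathrm{Gal}(\bbQ(\zeta_{|G_p|})/\bbQ)$, a group of order $d$. Then:
\begin{enumerate}
\item $\dim_{\bbC}W^H=d\,f_p(H)$ for every $H\leq G_p$.
\item The character $\psi$ of $W$ is rational-valued and satisfies $|C|\,d\,f_p(C)=\sum_{D\leq C}\phi(|D|)\psi(D)$ for cyclic $C$, where $\phi$ is Euler's function and $\psi(D)$ is the value on a generator of $D$. Hence $\psi$ is determined by $f_p$.
\item Since $f$ is invariant under $G$-conjugation, $\psi$ respects fusion in $G$.
\end{enumerate}
This Galois averaging, together with equalizing $d\,f(1)$ across primes, is where the integer $N$ really comes from. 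After that, your final application of Theorem~\ref{thm:MainConstruction} goes through as written.
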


 Theorem \ref{thm:mainApp} reduces the question of finding actions on homotopy spheres with restrictions on the rank of isotropy subgroups, to finding Borel-Smith functions that satisfy certain conditions. For example, Theorem \ref{thm:HomotopySpheresA} 
 directly follows from Theorem \ref{thm:mainApp} once we show that for every $\Qd(p)$-free rank two finite group $G$, there is a monotone Borel-Smith function $f\colon \cP \to \bbZ$ such that $f(H)=0$ for every $H \leq G$ with $\rk H=2$.

Theorem \ref{thm:mainApp} is also related to a question of Grodal and Smith  \cite{Grodal:2007}  on algebraic models for homotopy $G$-spheres. 
Let $G$ be a finite group, $\cH _p$ denote the family of
all $p$-subgroups in $G$, and $\Gamma_G=\Or_p(G)$ denote the orbit category over all $p$-subgroups of $G$. 
 A chain complex $\CC$ of $R\Gamma _G$-modules is called \emph{perfect} if it is finite-dimensional with $\CC_i$ a finitely generated projective $R\Gamma_G$-module for each $i$.  The dimension function of an $R$-homology $\underline{n}$-sphere $\CC$ over $R\Gamma_G$ is defined as a function $\Dim _{\cH_p} \CC\colon \cH_p \to \bbZ$ such that $(\Dim _{\cH_p} \CC)(H)=\underline{n} (H)+1$ for all $H \in \cH_p$.  It was mentioned in \cite{Grodal:2007}, and shown in \cite{Hambleton:2014} that if $\CC$ is a perfect complex which is an $\bbF_p$-homology $\underline{n}$-sphere, then the dimension function $\Dim _{\cH_p} \CC$ satisfies the Borel-Smith conditions (see \cite[Theorem C]{Hambleton:2014}). Grodal and Smith \cite{Grodal:2007} suggests that the converse also holds. 
 
\begin{question}[Grodal-Smith]\label{ques:GrodalSmith} Let $G$ be a finite group, and let $f$ be a monotone Borel-Smith function defined on $p$-subgroups of $G$. Is there then a perfect $\bbF_p \Gamma _G$-complex $\CC$, with dimension function $\Dim _{\cH_p} \CC=f$, which is an $\bbF _p$-homology $\underline{n}$-sphere ?
\end{question}

\section{Equivariant Moore spaces}\label{sec:twelve}

Let  $M$ be $\bbZ G$-module and $n\geq 0$ an integer. A $G$-CW-complex $X$ is called a \emph{Moore $G$-space of type $(M, n)$} if $\widetilde H_i(X ; \bbZ)=0$ for $i\neq n$ and $\widetilde H_n (X; \bbZ) \cong M$ as $\bbZ G$-modules. One of the classical problems in algebraic topology, due to Steenrod, asks whether every $\bbZ G$-module is realizable as the homology module of a Moore $G$-space.  Carlsson  \cite{Carlsson:1981a} showed that if $G$ is a finite group which contains $\bbZ/p\times \bbZ/p$ for some $p$, then there are non-realizable $\bbZ G$-modules.   The question of finding a good algebraic characterization of realizable $\bbZ G$-modules is still an open problem (see  \cite{Smith:1985,Smith:1985a} and  \cite{Benson:1987a}).

More generally, one can consider Moore $G$-spaces whose fixed point subspaces are also Moore spaces (as in  \cite{Yalcin:2017}).
\begin{definition} Let $\un{n} \colon \Sub \to \bbZ$ be a conjugation-invariant function. A $G$-CW-complex $X$ is called an \emph{$\un{n}$-Moore $G$-space over $R$} if for every $H \leq G$, the reduced homology group $\widetilde H_i(X ^H ; R)$ is zero for all $i\neq \un{n}(H)$. 
\end{definition}

When $\un{n}$ is the constant function with value $n$ for all $H\leq G$, the homology at dimension $n$ can be considered as a module over the orbit category $\Or G$. If $\widetilde{\un{H}}_n (X^?; R) \cong \un{M}$ as a module over $\Or G$, then $X$ is called a \emph{Moore $G$-space of type $(\un{M}, n)$}. When $R=\bbQ$ and $X^H$ is simply-connected for all $H \leq G$, the space $X$ is called a rational Moore $G$-space.  These spaces are studied extensively in equivariant homotopy theory (see  \cite{Doman:1988,Kahn:1986}).   

The concept of Moore $G$-spaces can also be generalized to allow the dimension function $\un{n}$ to be non-constant.  In \cite{Yalcin:2017}, the coefficient ring $R$ is taken as a field $k$ of characteristic $p$, and the group of $G$-Moore spaces $\cM_k (G)$ 
is defined as the Grothendieck group of the monoid of finite $\un{n}$-Moore $G$-spaces over $k$ with addition given by the join operation.
When $G$ is a $p$-group, $\cM _k (G)$ can be related to the Dade group $D_k(G)$, the group of endo-permutation $kG$-modules, via an exact sequence (see \cite[Theorem 1.4]{Yalcin:2017}. A $kG$-module $M$ is an \emph{endo-permutation} module if $\End_k (M) \cong M^* \otimes_k M$ is a permutation $kG$-module. For a $G$-set $X$, the kernel of the augmentation map $\varepsilon\colon kX \to k$ is an endo-permutation $kG$-module (see \cite{Alperin:2001,Alperin:2001a}). %\cite{Alperin:2001}
 Endo-permutation modules constructed in this way are called \emph{relative syzygies}. In \cite{Yalcin:2017}, the following theorem is proved: 

\begin{theorem}[{\cite[Theorem 1.2]{Yalcin:2017}}]\label{thm:main} Let $G$ be a finite $p$-group, and $k$ be a field of characteristic $p$. If $X$ is a finite $\un{n}$-Moore $G$-space over $k$, then the reduced homology module $\widetilde H_{n} (X, k)$ in dimension $n=\un{n}(1)$ is an endo-permutation $kG$-module generated by relative syzygies.
\end{theorem}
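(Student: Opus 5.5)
We argue by induction on $|G|$, working with the reduced cellular chain complex of $X$ over $k$. For each subgroup $H\leq G$, the complex $\widetilde{\CC}(X^H;k)$ is a finite complex of $k$-vector spaces. Its homology is concentrated in degree $\un{n}(H)$. The key tool is the Brauer quotient: for a permutation $kG$-module $kY$ and a $p$-subgroup $Q$, one has $(kY)[Q]\cong k[Y^Q]$. So the Brauer construction applied to the cellular chain complex $\widetilde{\CC}(X;k)$ of permutation modules is exactly $\widetilde{\CC}(X^Q;k)$.

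The strategy is to prove first that $M=\widetilde H_n(X;k)$ is endo-permutation. Then we identify it in the Dade group as a sum of classes of relative syzygies.

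\textbf{Step 1: a complex of permutation modules.} Consider the augmented complex $\widetilde{\CC}=\widetilde{\CC}(X;k)$. It is a bounded complex of finitely generated permutation $kG$-modules, and its homology is $M$ concentrated in degree $n$. Form the endomorphism complex $\widetilde{\CC}^*\otimes_k\widetilde{\CC}$. Its terms are permutation modules $k[Y\times Z]$, and its homology is $M^*\otimes M=\End_k(M)$ in degree $0$. We want to show that $\End_k(M)$ is a direct summand of a permutation module, and in fact a permutation module (up to the usual Dade-group conventions, possibly after adding projective-free parts). The plan is to use the criterion of Bouc's theory of endotrivial complexes and tensor-induced complexes. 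A bounded complex $\mathbf P$ of $p$-permutation modules has the following property whenever $\mathbf P[Q]$ has homology concentrated in a single degree for every $p$-subgroup $Q$: the complex is homotopy equivalent to one whose only nonzero homology comes from an endo-permutation module. Moreover, the "$h$-marks" $h(Q)$ given by these degrees determine the class. We verify the hypothesis directly, since $\widetilde{\CC}[Q]=\widetilde{\CC}(X^Q;k)$ has homology only in degree $\un{n}(Q)$ by the Moore assumption.

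\textbf{Step 2: endo-permutation.} Using the observation from Step 1, we show that $\widetilde{\CC}^*\otimes\widetilde{\CC}$ has Brauer quotients with homology concentrated in degree $0$ for every $Q$. This quotient is $\widetilde{\CC}(X^Q)^*\otimes\widetilde{\CC}(X^Q)$, so the Künneth formula gives the claim. A complex of permutation modules whose Brauer quotients at all $p$-subgroups (including $1$) are acyclic except in degree $0$ splits into its homology plus contractible pieces. To see this, one argues by induction on the poset of subgroups, peeling off summands $k[G/Q]$ using the acyclicity of the relevant Brauer quotients, as in Bouc's lemma that a complex of $p$-permutation modules with all Brauer quotients acyclic is contractible. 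This yields that $\End_k(M)$ is a permutation module, so $M$ is endo-permutation.

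\textbf{Step 3: generation by relative syzygies.} This is the main obstacle. The plan is to filter $X$ by orbit types. Let $X_{>1}=\bigcup_{1\neq Q}X^Q$ be the singular set. The pair $(X,X_{>1})$ gives free $kG$-modules, which contribute only projective (trivial in Dade group) terms and shifts. So the class of $M$ in $D_k(G)$ is determined by the homology of $X_{>1}$ together with the dimension shift $\un{n}(1)-\dim$. Meanwhile $X_{>1}$ is built from fixed-point spaces of smaller subquotients $N_G(Q)/Q$. By induction (using deflation/inflation as in Lemma \ref{lem:deflation}), these contribute classes inflated from relative syzygies of smaller groups. Each jump $\un{n}(Q)\neq\un{n}(Q')$ contributes a relative syzygy $\Omega_{G/Q}$ raised to the appropriate power. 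Concretely, I would aim to express $[M]=\sum_Q a_Q[\Omega_{G/Q}]$ with the coefficients $a_Q$ obtained by Möbius inversion of the function $\un{n}$, mirroring the join-additivity of Moore spaces. The delicate part is checking that the gluing of the strata produces exactly these syzygies in the Dade group, rather than extra torsion classes. For this, I would compare with the model space given by joins of $G$-sets $G/Q$, whose homology is visibly a tensor product of relative syzygies. I would then use Step 2's uniqueness, namely that the Dade class is determined by the Brauer-quotient degree function $h(Q)=\un{n}(Q)$, to identify $M$ with the model's homology up to the Dade equivalence.
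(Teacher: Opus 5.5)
The survey only quotes this theorem, so there is no in-text proof to compare against. Judged on its own, your argument for the endo-permutation property is sound, but the ``generated by relative syzygies'' part has a genuine gap.

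\textbf{Steps 1--2 are fine.} The identification $\widetilde{\CC}(X;k)[Q]=\widetilde{\CC}(X^Q;k)$ is correct. So is the compatibility of the Brauer construction with tensor products and duals of permutation modules, and so is the K\"unneth computation. Your conclusion that $\End_k(M)$ is a direct summand of the degree-zero term of $\widetilde{\CC}^*\otimes\widetilde{\CC}$ does show that $M$ is endo-permutation; in Rickard's language, $\widetilde{\CC}(X;k)$ is an endosplit $p$-permutation resolution. The lemma you actually need is not Bouc's contractibility criterion but the following: \emph{a map $f\colon V\to W$ of permutation $kG$-modules with $f[Q]$ injective for all $Q\leq G$ is split injective.} Prove it by induction on $\dim V$. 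Take $Q$ maximal among the vertices of summands of $V$; then $V[Q]$ is free, hence injective, over $k[N_G(Q)/Q]$, so $f[Q]$ has a retraction. Lift it to a map $W\to k[G/Q]^m$, and use that an endomorphism of $k[G/Q]^m$ with invertible Brauer quotient at $Q$ is invertible. Applying this lemma and its dual lets you strip the complex from the top and the bottom.

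\textbf{Step 3 does not prove anything.} You assert in Step 1 that the degree function $\un{n}$ determines the Dade class, and invoke it at the end as ``Step 2's uniqueness''. But Step 2 only treats $\widetilde{\CC}^*\otimes\widetilde{\CC}$ for a single complex, so no uniqueness has been shown. The stratification cannot replace it, for three reasons.
\begin{enumerate}
\item $X_{>1}$ is in general not a Moore space. Take $V=\chi_1\oplus\chi_2$ with $\chi_1,\chi_2$ characters of $\cy p\times\cy p$ with distinct kernels; then $X=S(V)\simeq S^3$, while $X_{>1}$ is two disjoint circles. So ``the homology of $X_{>1}$'' has no Dade class.
\item The free relative chains only identify $\widetilde{\CC}(X)$ with $\widetilde{\CC}(X_{>1})$ modulo perfect complexes. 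The claim that each jump of $\un{n}$ contributes a relative syzygy is the entire content, and it is left unargued.
\item Joins of orbits $G/Q$ only realize functions whose M\"obius coefficients are nonnegative, so your model may not exist.
\end{enumerate}

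\textbf{How to close the gap.} The missing idea is to apply your own splitting lemma to a \emph{mixed} complex. Write $\un{n}+1=\sum_Q a_Q\,\omega_{G/Q}$, summing over conjugacy classes, where $\omega_{G/Q}(H)=1$ if $H\leq_G Q$ and $0$ otherwise. Let $\mathbf E_Q=(k[G/Q]\to k)$ in degrees $0,-1$, and put
\[
\DD=\widetilde{\CC}(X;k)\otimes\bigotimes_{Q<G,\ a_Q<0}\mathbf E_Q^{\otimes(-a_Q)}\otimes\bigotimes_{Q<G,\ a_Q>0}(\mathbf E_Q^{*})^{\otimes a_Q}.
\]
For $Q<G$, $\mathbf E_Q[H]$ has nonzero homology exactly in degree $\omega_{G/Q}(H)-1$. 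Hence every $\DD[H]$ has homology only in degree $\sum_Q a_Q-1=\un{n}(1)$.

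By the splitting lemma, $\DD$ is homotopy equivalent to a single permutation module. Equivalently, $M\otimes\bigotimes\Omega_{G/Q}^{\otimes(-a_Q)}\otimes\bigotimes(\Omega_{G/Q}^{*})^{\otimes a_Q}$ is a permutation module.

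Now suppose $\widetilde H_*(X^G;k)\neq0$. Then the Brauer quotient of this module at $G$ is nonzero, so it is capped with cap $k$. It follows that $M$ is capped and $[M]=\sum_{Q<G}a_Q[\Omega_{G/Q}]$ in $D_k(G)$.

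If instead $X^G$ is $k$-acyclic, then $M$ is not capped and the statement must be read accordingly; for example, $X=G\sqcup\{\mathrm{pt}\}$ gives $M\cong kG$. Your plan does not address this case.

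This mixed-complex argument also makes the stratification and the induction over $N_G(Q)/Q$ unnecessary.
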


A version of this theorem also holds for arbitrary finite groups (see \cite[Theorem 7.10]{Gelvin:2021}).  Later these results were extended in the context of endotrivial complexes of $p$-permutation $kG$-modules by Miller \cite{Miller:2024}.  In \cite{Miller:2025a}, Miller
gives a complete classification of endotrivial complexes, and hence determine the Picard group  of the tensor-triangulated category $K^b ({}_{kG} \textbf{triv})$, the bounded homotopy category of $p$-permutation modules, which is the category recently considered by Balmer and Gallauer  in 
\cite{Balmer:2023,Balmer:2023a,Balmer:2025a}.

%%%%%%%%%%%%%%%%%%%%%%%%%%%%
    
%\bibliographystyle{ih_test}                            
%\bibliography{survey}
%\end{document}

%%%%%%%%%%%%%%%%%%%%%%%%%%%%%%
\providecommand{\bysame}{\leavevmode\hbox to3em{\hrulefill}\thinspace}
\providecommand{\MR}{\relax\ifhmode\unskip\space\fi MR }
% \MRhref is called by the amsart/book/proc definition of \MR.
\providecommand{\MRhref}[2]{%
  \href{http://www.ams.org/mathscinet-getitem?mr=#1}{#2}
}
\providecommand{\href}[2]{#2}

\end{document}